\newtheorem{theorem}{Theorem}[section]
\newtheorem{lemma}[theorem]{Lemma}
\newtheorem{corollary}[theorem]{Corollary}
\newtheorem{proposition}[theorem]{Proposition}
\newtheorem{conjecture}[theorem]{Conjecture}
\theoremstyle{definition}
\theoremstyle{definition}}
\theoremstyle{definition}
\newtheorem{definition}[theorem]{Definition}
\theoremstyle{definition}
\newtheorem{question}[theorem]{Question}
\newtheorem{fact}[theorem]{Fact}
 \newtheorem*{fact*}{Fact}
\theoremstyle{definition}\newtheorem{remark}[theorem]{Remark}}
\newcommand{\pss}[2]{\ensuremath{{\langle #1,#2\rangle}}}
\def\T{\ensuremath{\mathbb T}}
\def\R{\ensuremath{\mathbb R}}
\def\Z{\ensuremath{\mathbb Z}}
\def\C{\ensuremath{\mathbb C}}
\def\Q{\ensuremath{\mathbb Q}}
\def\N{\ensuremath{\mathbb N}}
\newcommand{\llk}{\pmb{l}_{k}}
\newcommand{\yy}{\pmb{y}}
\newcommand{\nn}{\pmb{n}}
\newcommand{\hh}{\pmb{h}}
\newcommand{\lle}{\pmb{l}}
\newcommand{\gc}{\textrm{gcd\,}}
\newcommand{\wh}[1]{\widehat{#1}}
\newcommand{\leb}{\textrm{Leb}}
\newcommand{\fr}[1]{\dfrac{1}{N_{#1}}}
\newcommand{\pp}[2]{\mathcal{P}_{#1}(\T^{#2})}
\newcommand{\muc}{\wh{\mu }}
\newcommand{\de}[1]{\delta _{\{#1\}}}
\newcommand{\gro}[1]{G_{#1}}
\newcommand{\cro}[1]{C_{#1}}
\newcommand{\dro}[1]{D_{#1}}
\newcommand{\la}{\lambda ^{p^{\nk }-1}}
\newcommand{\cnge}{(c_{n})_{n\ge 0}}
\newcommand{\cn}{(c_{n})}
\newcommand{\td}{\T^{d}}
\newcommand{\we}{w^{*}}
\newcommand{\lpj}{\lambda ^{p^{j}}}
\newcommand{\zpj}{z ^{p^{j}}}
\newcommand{\ds}{\displaystyle}
\newcommand{\pnm}{p^{N}-1 }
\newcommand{\xx}{\pmb{x}}
\newcommand{\zzk}{\pmb{z}_{k}}
\newcommand{\cc}{\pmb{C}}
\newcommand{\dd}{\pmb{D}}
\newcommand{\ff}{\pmb{F}}
\newcommand{\nk}{N_{k}}
\newcommand{\xxk}{\pmb{x}_{k}}
\newcommand{\fl}[2]{{#1 \to #2}}
\newcommand{\flw}{\xrightarrow{\text{$w^*$}}}
\newcommand{\flwco}{\xrightarrow{\text{$w^*$}}}
\def\slashedarrowfill@#1#2#3#4#5{%
  $\m@th\thickmuskip0mu\medmuskip\thickmuskip\thinmuskip\thickmuskip
   \relax#5#1\mkern-7mu%
   \cleaders\hbox{$#5\mkern-2mu#2\mkern-2mu$}\hfill
   \mathclap{#3}\mathclap{#2}%
   \cleaders\hbox{$#5\mkern-2mu#2\mkern-2mu$}\hfill
   \mkern-7mu#4$%
}
\def\rightslashedarrowfilla@{%
  \slashedarrowfill@\relbar\relbar{\raisebox{1.2pt}{$\scriptscriptstyle\diagup$}}\rightarrow}
\newcommand\xslashedrightarrowa[2][]{%
  \ext@arrow 0055{\rightslashedarrowfilla@}{#1}{#2}}
\def\rightslashedarrowfillb@{%
  \slashedarrowfill@\relbar\relbar/\rightarrow}
\newcommand\xslashedrightarrowb[2][]{%
  \ext@arrow 0055{\rightslashedarrowfillb@}{#1}{#2}}
\def\rightslashedarrowfillc@{%
  \slashedarrowfill@\relbar\relbar{\raisebox{.12em}{\tiny/}}\rightarrow}
\newcommand\xslashedrightarrowc[2][]{%
  \ext@arrow 0055{\rightslashedarrowfillc@}{#1}{#2}}
\tikzset{nomorepostaction/.code=\let\tikz@postactions\pgfutil@empty}
\newcommand{\nflw}{\xslashedrightarrowb[]{ w^*} }
\begin{document}

\begin{frontmatter}[classification=text]
%% EDITOR: this will force the keywords to appear right after the Abstract.

%%% AUTHOR: Title goes here.  This line is optional. 
\title{Around Furstenberg's Times $p$, Times $q$ Conjecture: Times $p$-Invariant Measures with some Large Fourier Coefficients\footnote{C.B. and S.G. were supported in part by
the project FRONT of the French
National Research Agency (grant ANR-17-CE40-0021) and by the Labex CEMPI (grant ANR-11-LABX-0007-01). C.B. was supported in part by the Max Planck Institute for Mathematics in Bonn.}} 
%% please capitalize all significant words

%%% AUTHOR:
%%% List all authors. If you wish, place grant acknowledgements in \thanks.
%%% In brackets include a short tag for each author.
\author[C.B.]{Catalin Badea}
\author[S.G.]{Sophie Grivaux}

%%% AUTHOR: Abstract goes here
\begin{abstract}
For each integer \(n\ge 1\), denote by \(T_{n}\) the map \(x\mapsto nx\mod 1\) from the circle group \(\T=\R/\Z\) into itself. Let $p,q\ge 2$ be two multiplicatively independent integers. Using Baire Category arguments, we show that generically a \(T_{p}\)-invariant probability measure $\mu$ on \(\T\) with no atom has some large Fourier coefficients along the sequence $(q^n)_{n\ge 0}$. In particular, \((T_{q^{n}}\mu )_{n\ge 0}\) does not converges weak-star to the normalised Lebesgue measure on $\T$. 
This disproves a conjecture of Furstenberg
and complements previous results of Johnson and Rudolph. In the spirit of previous work by Meiri and Lindenstrauss-Meiri-Peres, we study generalisations of our main result
to certain classes  of sequences \(\cn_{n\ge 0}\) other than the sequences \((q^{n})_{n\ge 0}\), and also investigate the multidimensional setting.
\end{abstract}
\end{frontmatter}

%%% AUTHOR: body of paper starts here

\section{Introduction and main results}\label{Section 1}
\subsection{Synopsis}
In the late 1960s, Furstenberg proved significant results and proposed fascinating conjectures that aimed to express in various ways the heuristic principle that expansions in multiplicatively independent bases have no shared structure. For further details about this idea, readers can refer to the recent survey \cite{Shm} which also outlines some progress in Furstenberg's programme. Here, we shall list one result and three conjectures due to Furstenberg; some known partial results related to these conjectures will be mentioned in the following subsection. \emph{In all these statements,} $p,q \ge 2$ \emph{are two fixed multiplicatively independent integers}. Recall that $p,q \ge 2$ are called \emph{multiplicatively independent} if $\log p/\log q \not\in \Q$. For each integer \(n\ge 1\), denote by \(T_{n}\) the map \(x\mapsto nx\mod 1\) from the circle group \(\T=\R/\Z\), identified with \([0,1)\), into itself. A subset \(F\) of \(\T\) is said to be $T_n$-invariant if $T_n(F) \subset F$. Notice that $T_n$ shifts the $n$-ary expansion of a real number and that each map $T_n$ has many closed, infinite invariant subsets.

The following topological rigidity result has been proved in \cite{F}.
\begin{theorem}[Furstenberg]\label{thm:C0}
The only infinite closed subset \(F\) of \(\T\) which is simultaneously \(T_{p}\)- and \(T_{q}\)-invariant is \(F=\T\).
\end{theorem}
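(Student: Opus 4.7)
The plan is to reduce the theorem to a density lemma: if $\alpha \in \T$ is irrational and $p, q \geq 2$ are multiplicatively independent, then the semigroup orbit $\{p^{i}q^{j}\alpha \bmod 1 : i, j \geq 0\}$ is dense in $\T$. Granting this, the conclusion follows at once by noting that any infinite closed $(T_p, T_q)$-invariant set $F$ either contains an irrational $\alpha$ (and then $F \supseteq \overline{\Sigma\alpha} = \T$, where $\Sigma := \{p^i q^j : i, j \geq 0\}$) or is contained in $\Q/\Z$. The latter case can be ruled out by a variant of the same density argument: taking a rational accumulation point $\alpha = a/b$ of $F$ together with $\alpha_n \in F$, $\alpha_n \to \alpha$, the multiplicative density of $\Sigma$ intersected with an appropriate arithmetic progression $m_{0} + b\Z$ (for suitable $m_0 \in \Sigma$) allows one to show that the points $T_{m}\alpha_n = m\alpha + m(\alpha_n - \alpha) \in F$ approach every point of $\T$, forcing $F = \T \not\subseteq \Q/\Z$, a contradiction.

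The density lemma itself I would prove by contradiction: suppose $X := \overline{\Sigma\alpha} \subsetneq \T$. Using Zorn's lemma, extract a nonempty minimal closed $(T_p, T_q)$-invariant subset $Y \subseteq X$, so that by minimality every $\Sigma$-orbit in $Y$ is dense in $Y$. Since $Y \neq \T$, the complement $\T \setminus Y$ is a nonempty open set, a disjoint union of open arcs (the ``gaps'' of $Y$); let $\ell^{*} > 0$ be the supremum of their lengths. The goal is then to derive a contradiction from $\ell^{*} > 0$.

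The multiplicative independence of $p$ and $q$ enters through the classical observation that, $\log p / \log q$ being irrational, $\Sigma$ is multiplicatively dense at infinity: for every $\eta > 0$ and every sufficiently large $A$, the interval $[A, A(1+\eta)]$ contains an element of $\Sigma$. Combining this with the dense $\Sigma$-orbits afforded by minimality of $Y$, one aims to manufacture an arc in $\T \setminus Y$ of length strictly greater than $\ell^{*}$, contradicting the definition of $\ell^{*}$. The main obstacle is that the invariance $T_n(Y) \subseteq Y$ for $n \in \Sigma$ controls only the forward action of $T_n$ on $Y$ (hence the behaviour of preimages of gaps), not the forward images of gaps themselves; the delicate step is to use the minimality of $Y$ to place endpoints of near-maximal gaps at convenient positions and then align them via multiplicative density of $\Sigma$ so that the resulting transported arc exceeds $\ell^{*}$. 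This interplay between topological minimality and arithmetic density is precisely Furstenberg's central insight, and constitutes the crux of the argument.
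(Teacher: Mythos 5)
First, a point of reference: the paper does not prove this theorem at all; it is quoted from Furstenberg's paper \cite{F}, with Boshernitzan's elementary proof \cite{Bosh} also cited. So your proposal can only be judged on its own merits. The outer reduction you describe is the standard one and is sound in outline: granting the density lemma for irrational $\alpha$, an infinite closed invariant $F$ containing an irrational point is immediately all of $\T$, and the purely rational case is handled exactly as you indicate, by fixing a residue class so that $m\alpha$ is constant along a sub-semigroup-coset of $\Sigma=\{p^iq^j\}$ (which remains multiplicatively dense, since $\log p^r/\log q^s$ is still irrational) and then using the nonzero displacements $\alpha_n-\alpha\to 0$ together with the fact that consecutive elements of $\Sigma$ have ratio tending to $1$. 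That part, though only sketched, is essentially Boshernitzan's argument and would go through.

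The genuine gap is in the density lemma itself, and it is twofold. First, you explicitly do not prove the decisive step: after setting up $\ell^*>0$ you state that deriving a contradiction is ``the crux of the argument'' and leave it there; a proof whose central step is declared rather than executed is not a proof. Second, and more seriously, the framework you set up for that step cannot work. A minimal closed $(T_p,T_q)$-invariant set $Y$ can perfectly well be finite --- $\{0\}$ is one, and more generally periodic orbits of suitable rationals are; indeed, assuming the theorem, \emph{every} minimal set is finite, since an infinite one would equal $\T$, which is not minimal (it properly contains $\{0\}$). For such a $Y$ one has $\ell^*\ge 1/\#Y>0$ and there is no contradiction to be found: finite invariant sets with large gaps genuinely exist, so no argument can refute $\ell^*(Y)>0$ using only the minimality and invariance of $Y$. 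The information that $\alpha$ is irrational is lost the moment you pass to $Y$. The correct proofs must keep the ambient set $X=\overline{\Sigma\alpha}$ in play: Furstenberg works with the set of translations $t$ such that $t+M\subseteq X$ (where $M\subseteq X$ is minimal), shows it is closed, $\Sigma$-invariant, and has $0$ as a non-isolated point, and only then applies the ``small nonzero displacement plus multiplicative density'' mechanism --- the same mechanism you correctly deploy in the rational case. As written, your plan for the irrational case has no route to produce the small nonzero elements of an invariant set that this mechanism requires.
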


Furstenberg formulated a conjecture, called Conjecture (C1) in \cite{L2}, which is stronger than Theorem~\ref{thm:C0} and deals with the asymptotic behaviour of a  $T_p$--invariant subset under the action of $T_q$.

\begin{conjecture}\label{conj:C1}
Let $F$ be an infinite closed $T_p$-invariant subset. Then the iterates $T_{q}^{n}(F)$ converge in the Hausdorff distance to  \(\T\) as $n$ tends to infinity. 
\end{conjecture}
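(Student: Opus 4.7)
The plan is to argue by contradiction, using compactness of the hyperspace $\mathcal{K}(\T)$ of non-empty closed subsets of $\T$ under the Hausdorff metric together with Furstenberg's rigidity theorem (Theorem~\ref{thm:C0}) as the endgame.

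If the conclusion fails, one can extract $\varepsilon_{0} > 0$ and an increasing sequence $(n_{k})$ with $d_{H}(T_{q}^{n_{k}}(F), \T) \geq \varepsilon_{0}$, then by compactness pass to a further subsequence so that $T_{q}^{n_{k}}(F) \to F_{\infty}$ in Hausdorff metric, with necessarily $F_{\infty} \neq \T$. Because $T_{p}$ and $T_{q}$ commute, each $T_{q}^{n_{k}}(F)$ is $T_{p}$-invariant, and by continuity of $T_{p}$ on $\T$ the Hausdorff limit $F_{\infty}$ is also closed and $T_{p}$-invariant. To ensure that $F_{\infty}$ is infinite I would first reduce to the case where $F$ is $T_{p}$-minimal, so that $F$ has a positive \emph{separation constant} which cannot be completely wiped out by $T_{q}^{n_{k}}$; otherwise $F_{\infty}$ could a priori collapse to a finite, or even a single-point, set.

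The decisive step is to promote $F_{\infty}$ to a \emph{$T_{q}$-invariant} set, at which point Theorem~\ref{thm:C0} yields $F_{\infty} = \T$ and contradicts the choice of $\varepsilon_{0}$. The natural attempt is a diagonal extraction producing sets $F_{j}$ for $j \geq 0$ with $T_{q}^{n_{k}+j}(F) \to F_{j}$ and $F_{0} = F_{\infty}$; continuity of $T_{q}$ on $\mathcal{K}(\T)$ gives $F_{j+1} = T_{q}(F_{j})$, so the entire forward orbit of $F_{\infty}$ under the induced map $\widetilde{T}_{q}$ sits inside the $\widetilde{T}_{q}$-orbit closure of $F$. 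Birkhoff recurrence applied to $\widetilde{T}_{q}$ on this closed invariant subset of $\mathcal{K}(\T)$ then produces times $j_{k} \to \infty$ with $T_{q}^{j_{k}}(F_{\infty}) \to F_{\infty}$ in Hausdorff metric.

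I fully expect the argument to stall at this last point, and this is the obstacle I would flag as the main one. Recurrence in the hyperspace only gives $T_{q}^{j_{k}}(F_{\infty}) \to F_{\infty}$ along some subsequence; it does \emph{not} force $T_{q}(F_{\infty}) = F_{\infty}$, since the hyperspace orbit can drift away and return only after many compensating steps. Closing this gap between ``Hausdorff-recurrent'' and ``honestly $T_{q}$-invariant'' is precisely the content of Conjecture~\ref{conj:C1}, and I suspect that any genuine proof must import quantitative information from outside pure topological dynamics --- entropy, dimension, or Fourier-analytic estimates in the spirit of the Rudolph and Johnson results alluded to in the abstract --- rather than rely on abstract compactness and rigidity alone.
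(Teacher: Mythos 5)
This statement is not proved in the paper: it is Furstenberg's Conjecture (C1), which the authors explicitly describe as ``largely open'' in Section~\ref{Section1a}. So there is no proof to compare yours against, and your proposal cannot be a complete proof --- as you yourself recognise. Your self-diagnosis of the main obstruction is accurate: abstract compactness of the hyperspace plus Theorem~\ref{thm:C0} only yields that some subsequential Hausdorff limit $F_{\infty}$ of $(T_{q}^{n}(F))$ is closed and $T_{p}$-invariant, and Birkhoff recurrence for the induced map on the hyperspace gives only $T_{q}^{j_{k}}(F_{\infty})\to F_{\infty}$ along a subsequence, not $T_{q}(F_{\infty})=F_{\infty}$. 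This is exactly why the known partial result in this direction (the lemma of Kra cited in the paper) only produces a subsequence $(q^{n_{k}})$ along which $T_{q}^{n_{k}}(F)\to\T$, rather than full convergence.

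Two further points. First, your reduction ``to the $T_{p}$-minimal case'' to guarantee that $F_{\infty}$ is infinite is itself problematic: an infinite closed $T_{p}$-invariant set need not contain an infinite minimal subset, and even when $F$ is infinite and minimal it is not clear that a positive separation constant survives the highly non-injective, expanding maps $T_{q^{n}}$ in the Hausdorff limit; without $F_{\infty}$ infinite, Theorem~\ref{thm:C0} gives nothing. Second, for context: the present paper \emph{disproves} the measure-theoretic analogue (Conjecture~\ref{conj:C3}) by showing that generically a continuous $T_{p}$-invariant measure has large Fourier coefficients along $(q^{n})$. The topological conjecture (C1) remains open, but the failure of its measure-theoretic counterpart is a warning that any proof will have to exploit genuinely topological features of closed invariant sets, not just soft compactness and rigidity, in line with your own closing remark.
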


The measure-theoretical analogue statements of Theorem~\ref{thm:C0} and Conjecture \ref{conj:C1} are both conjectural statements. Recall that a Borel probability measure $\mu$ on  \(\T\) is said to be $T_n$-invariant if $\mu = T_n\mu$, where $T_n\mu$ is the measure defined by $T_n\mu(C) = \mu(T_n^{-1}C)$ for every measurable set $C$. There are uncountably many $T_n$-invariant, or even ergodic, measures; see for instance \cite{BenQui}*{p. 141}. The next measure-theoretical rigidity conjecture, called Conjecture (C2) in \cite{L2}, 
is the renowned $\times p$, $\times q$ conjecture of Furstenberg, one of the most fundamental open questions in ergodic theory. We say that a probability measure is \emph{continuous} if it has no atom.

\begin{conjecture}[$\times p$, $\times q$ conjecture]\label{conj:C2}
The only continuous Borel probability measure on $\T$ which is simultaneously \(T_{p}\)- and \(T_{q}\)-invariant is the (normalised) Lebesgue measure \emph{Leb}. 
\end{conjecture}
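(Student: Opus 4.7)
The plan is to translate joint $T_p,T_q$-invariance into a rigid Fourier-analytic constraint and then argue that only Lebesgue measure can satisfy it. Note that $T_n\mu = \mu$ is equivalent to $\widehat{\mu}(k) = \widehat{\mu}(nk)$ for every $k\in\Z$, so joint invariance is equivalent to the system of identities
\[
\widehat{\mu}(k) = \widehat{\mu}(p^{a}q^{b}k) \qquad \text{for all } a,b \ge 0 \text{ and } k \in \Z.
\]
Since $\log p/\log q \notin \Q$, for each $k\ne 0$ the orbit $\{p^{a}q^{b}k:a,b\ge 0\}$ is infinite and uniformly distributed on a logarithmic scale. Combined with the continuity of $\mu$, which via Wiener's theorem forces $N^{-1}\sum_{|k|\le N}|\widehat{\mu}(k)|^{2}\to 0$, one would like to conclude that $\widehat{\mu}(k)=0$ for every $k\ne 0$, hence $\mu = \mathrm{Leb}$.

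To organise the attack I would follow the Rudolph--Johnson strategy. First, reduce to the case where $\mu$ is ergodic for the joint $(T_p,T_q)$-action by ergodic decomposition. Next, split according to the $T_p$-entropy $h_{\mu}(T_p)$. In the positive-entropy case the Rudolph--Johnson theorem applies directly and yields $\mu=\mathrm{Leb}$. In the zero-entropy case, one would attempt to show that the $p$-adic scenery of $\mu$ is so deterministic that joint invariance under $T_q$ forces either an atom (contradicting continuity) or full Lebesgue density; a natural device is to try to match the normalised $p$-adic and $q$-adic scaling flows along a common carrier and extract rigidity from their simultaneous compatibility.

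The main obstacle is precisely the zero-entropy case, and I do not see how to overcome it with currently available tools. The present paper is a strong cautionary note in this direction: it shows that generically (in the Baire sense) a continuous $T_p$-invariant measure has large Fourier coefficients along $(q^n)_{n\ge 0}$, so the iterates $T_{q^n}\mu$ need not approach $\mathrm{Leb}$. Consequently no soft Fourier-analytic averaging argument along the sequence $(q^n)$ alone can succeed, and any eventual proof must genuinely exploit both $T_p$- and $T_q$-invariance in a nonlinear way, presumably via some new input from higher-order Fourier analysis or additive combinatorics adapted to multiplicatively independent bases. Barring such an input, this conjecture must be left as a conjecture.
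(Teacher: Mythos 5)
This statement is Furstenberg's $\times p$, $\times q$ conjecture; it is one of the central open problems in ergodic theory, the paper states it only as a conjecture, and there is no proof of it in the paper to compare against. Your proposal is candid about this and does not in fact prove anything: the first paragraph's Fourier reduction is correct ($T_n\mu=\mu$ iff $\widehat{\mu}(nk)=\widehat{\mu}(k)$ for all $k$, so joint invariance means $\widehat{\mu}$ is constant on each multiplicative orbit $\{p^aq^bk\}$), but the step ``one would like to conclude that $\widehat{\mu}(k)=0$'' is precisely where every known approach breaks down. Wiener's theorem controls the density-averaged quantity $N^{-1}\sum_{|k|\le N}|\widehat{\mu}(k)|^2$, whereas the orbit $\{p^aq^bk\}$ contains only $O((\log N)^2)$ integers up to $N$; a density-zero set of frequencies is invisible to Wiener's theorem, so continuity of $\mu$ gives no control there. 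The paper's own Theorem 1.8 is in effect a demonstration of this obstruction: generically a continuous $T_p$-invariant measure has $\limsup_n|\widehat{\mu}(q^n)|>0$, so Fourier decay along sparse multiplicative orbits cannot be extracted from $T_p$-invariance plus continuity alone.

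Your second paragraph accurately summarises the state of the art (Rudolph--Johnson disposes of the positive-entropy case after ergodic decomposition for the joint action; note, though, that positive entropy of $\mu$ need not pass to its ergodic components, so even that reduction needs care), and you correctly identify the zero-entropy case as the open gap. One further point worth making explicit: since Conjecture 1.5 (C3) implies Conjecture 1.4 (C2), the paper's disproof of C3 closes off the one ``soft'' route to C2 that had been proposed, which is exactly the cautionary reading you give. In short: there is a genuine and unavoidable gap (the zero-entropy case), you have named it, and no complete proof should have been expected here.
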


The natural analogue of Conjecture \ref{conj:C1} for measures was also conjectured by Furstenberg: this is Conjecture (C3) in \cite{L2} and concerns now the convergence in the weak-star topology of a $T_p$-invariant measure under the action of $T_q$. We shall recall the definition of \(w^{*}\) convergence in Section~\ref{Notation}.

\begin{conjecture}\label{conj:C3}
Let $\mu$ be a continuous Borel probability measure on  \(\T\) which is $T_p$-invariant. Then $T_q^n\mu$ converge \(w^{*}\) to \emph{Leb}.
\end{conjecture}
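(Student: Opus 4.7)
The plan is to translate $w^*$-convergence into Fourier language and attack it through the interaction of $T_p$-invariance with Wiener's lemma. Since trigonometric polynomials are dense in $C(\T)$, the convergence $T_q^n\mu \to \textrm{Leb}$ in $w^*$ is equivalent to $\widehat{T_q^n\mu}(k) = \widehat{\mu}(q^n k) \to 0$ for every fixed $k \in \Z \setminus\{0\}$. I would first ergodically decompose $\mu$ with respect to $T_p$, discarding the periodic atomic components (of total mass zero since $\mu$ is continuous), so as to reduce to $\mu$ being $T_p$-ergodic and continuous. The $T_p$-invariance then supplies the core identity $\widehat{\mu}(m) = \widehat{\mu}(pm)$, so that $\widehat{\mu}(q^n k) = \widehat{\mu}(p^j q^n k)$ for every $j \ge 0$; multiplicative independence of $p$ and $q$ (and, harmlessly, $\gcd(p,q) = 1$) ensures that the orbits $\{p^j q^n k : j \ge 0\}$ are pairwise disjoint as $n$ varies, so $\widehat{\mu}$ takes a common value on each of a sparse but infinite family of integers per scale.

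This replication would be combined with Wiener's lemma, which for continuous $\mu$ reads $\tfrac{1}{2M+1}\sum_{|m|\le M}|\widehat{\mu}(m)|^{2} \to 0$. Writing $a_n := \widehat{\mu}(q^n k)$ and counting members of the $p$-orbit of $q^n k$ below $M$ gives
$$\frac{1}{2M+1}\sum_{|m|\le M}|\widehat{\mu}(m)|^{2} \;\ge\; \frac{c}{M} \sum_{n \,:\, q^n k \le M} \frac{\log(M/q^n k)}{\log p}\, |a_n|^{2},$$
and the hope is that, sharpened with further harmonic input, this inequality forces $|a_n| \to 0$. In the positive-entropy regime $h_{T_p}(\mu) > 0$, I would additionally invoke Host's theorem on $\mu$-a.e.\ equidistribution of $(q^n x)_{n\ge 0}$ for ergodic $T_p$-invariant measures of positive entropy, which upon integration yields the Cesàro variant $\tfrac{1}{N}\sum_{n<N} T_q^n\mu \to \textrm{Leb}$; the remaining task is to upgrade Cesàro to pointwise convergence, presumably by iterating the $T_p$-replication trick together with the orthogonality relations $\widehat{\mu}(p^j q^n k) = a_n$ over the disjoint orbits.

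The main obstacle, which I expect to be decisive, is the zero-entropy ergodic case. Continuous $T_p$-invariant measures of zero entropy can be highly arithmetic (Riesz products, substitution systems, Bernoulli-convolution-type constructions on Pisot sets), and neither Wiener's lemma nor Host's theorem offers any mechanism that singles out the specific scale $q^n k$ for Fourier decay. Quantitatively, the Wiener estimate above is essentially vacuous: the $p$-orbit of $q^n k$ below $M$ contains only $O(\log M / \log p)$ members, and there are $\Theta(\log M / \log q)$ relevant values of $n$, so the replicated mass one can account for is only $O((\log M)^2/M)$, which decays for trivial reasons and yields no bound on any individual $|a_n|$. Supplying an additional arithmetic or spectral input to close this gap in the zero-entropy case is where any serious attack would stand or fall; it is entirely conceivable that \emph{no} such input exists and that the freedom remaining in the zero-entropy, continuous, $T_p$-ergodic regime is in fact sufficient to construct counterexamples to the statement.
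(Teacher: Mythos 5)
You cannot prove this statement, because it is false: it is Furstenberg's Conjecture (C3), which the paper states only in order to \emph{disprove} it. Theorem \ref{Theorem 1} shows that the set of continuous $T_p$-invariant measures $\mu$ with $\limsup_{n\to+\infty}|\widehat{\mu}(q^n)|>0$ is residual in $(\mathcal{P}_p(\T),w^*)$, so that generically $T_{q^n}\mu$ does \emph{not} converge $w^*$ to $\textrm{Leb}$ (and multiplicative independence of $p$ and $q$ is not even needed for this). Your closing paragraph is therefore exactly right, and no sharpening of the Wiener-lemma estimate can close the gap you identify: the zero-entropy obstruction is not a technical nuisance but the actual state of affairs. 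Concretely, the counterexamples come from arithmetic plus Baire category rather than from an explicit Riesz-product-type construction: by the periodic specification property of $T_p$, the CO-measures $\mu_\lambda=\frac{1}{N}\sum_{j=0}^{N-1}\delta_{\{\lambda^{p^j}\}}$ with $\lambda^{p^N-1}=1$, $N$ ranging over a suitable infinite set, are $w^*$-dense in $\mathcal{P}_p(\T)$ (Theorem \ref{Theorem 2.1}); elementary number theory (Lemma \ref{Lemma 3.1}) gives $q^{a+kr_N}\equiv q^a \pmod{p^N-1}$ for all $k$, whence $\widehat{\mu_\lambda}(q^n)=\widehat{\mu_\lambda}(q^a)$ for infinitely many $n$, a value which can be made nonzero after mixing in a small multiple of $\delta_1$. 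This produces a dense $G_\delta$ of measures whose Fourier coefficients along $(q^n)$ do not tend to $0$, and intersecting it with the dense $G_\delta$ of continuous $T_p$-invariant measures (Fact \ref{Fact 3.4}) yields residually many continuous counterexamples.

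Two further remarks on the internal steps of your plan. The reduction to ergodic continuous $\mu$ is legitimate (atomic ergodic components are CO-measures carried by the countable set of $T_p$-periodic points, so continuity of $\mu$ forces almost every ergodic component to be continuous, and dominated convergence passes the desired Fourier decay through the ergodic decomposition), but it buys you nothing in the zero-entropy regime, which is where the statement fails. And in the positive-entropy regime, the Host/Johnson--Rudolph input genuinely yields only Ces\`aro, equivalently density-one, convergence of $T_{q^n}\mu$ to $\textrm{Leb}$; there is no known mechanism (and none proposed in your sketch beyond ``iterating the replication trick'') to upgrade this to convergence along the full sequence, and Corollary \ref{cor:jr} of the paper shows that for generic continuous $T_p$-invariant measures density-one convergence and failure of full convergence coexist, so any such upgrade would have to use positive entropy in an essential way.
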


It is easy to see that  Conjecture~\ref{conj:C3} implies the $\times p$, $\times q$ conjecture (Conjecture~\ref{conj:C2}): suppose indeed that Conjecture~\ref{conj:C3}  is true, and let \(\mu \) be a continuous and simultaneously \(T_{p}\)- and \(T_{q}\)-invariant measure as in Conjecture~\ref{conj:C2}. Since \(T_q^n(\mu)={T_{q^{n}}}\mu \xrightarrow{\text{$w^*$}}\textrm{Leb}\), 
the Fourier coefficients of $\mu$ verify \(\widehat{\mu }(aq^{n})\rightarrow 0\)
for every \(a\in\Z\setminus\{0\}\). Since \(\mu \) is \(T_{q}\)-invariant, this implies that \(\wh{\mu }(a)=0\) for every \(a\in\Z\setminus\{0\}\), so that \(\mu =\textrm{Leb}\).
\par\smallskip

The main aim of this manuscript is to show that generically (in the Baire Category sense), a continuous \(T_{p}\)-invariant probability measure $\mu$ on \(\T\) has some large Fourier coefficients along the sequence $(q^n)_{n\ge 0}$. This implies that the sequence \((T_q^n(\mu))_{n\ge 0}=(T_{q^{n}}\mu )_{n\ge 0}\) does not converge \(w^{*}\) to $\textrm{Leb}$, \emph{disproving} thus Conjecture~\ref{conj:C3}. The precise statement is given in Theorem~\ref{Theorem 1} below.
\par\smallskip

It follows from our results and some results of Johnson and Rudolph in \cite{JR} that generically, in the Baire Category sense, \((T_{q^{n}}\mu )_{n\ge 0}\) does not converge \(w^{*}\) to the Lebesgue measure, but the convergence of \(T_{q^{n}}\mu\) to $\textrm{Leb}$ holds along a ``large'' sequence of integers (a sequence of upper density $1$); see Corollary~\ref{cor:jr}. This sheds some light on the complexity of the asymptotic behaviour of the action of $T_q$ on a generic $T_p$-invariant measure. In the spirit of previous work by Meiri \cite{M} and Lindenstrauss-Meiri-Peres \cite{LMP}, we study generalisations of our main result
to certain classes  of sequences \(\cn_{n\ge 0}\) other than the sequences \((q^{n})_{n\ge 0}\), and also investigate the multidimensional setting.
\par\smallskip

Our methods are mainly functional-analytic, based for instance on Baire category methods and the Hahn-Banach theorem. We also use tools from classical harmonic analysis (Fourier coefficients of measures, $p$-Bernoulli measures), ergodic theory (the periodic specification property, the ergodic decomposition theorem) and elementary number theory. It is also interesting to note that, unlike most of the works on this topic, positive entropy does not play any role in the proofs.

\subsection{Background}\label{Section1a}
Without claiming completeness, we mention some previous contributions related to Theorem~\ref{thm:C0} and Conjectures \ref{conj:C1}, \ref{conj:C2} and \ref{conj:C3}. 
\par\smallskip

Many aspects of the dynamics of subsemigroups of \((T_{n})_{n\ge 1}\) were discussed in the seminal paper \cite{F} by Furstenberg. The proof of Theorem~\ref{thm:C0} in \cite{F} used the disjointness of specific dynamical systems, a notion introduced in \cite{F}. An elementary proof of Theorem~\ref{thm:C0} has been given by Boshernitzan \cite{Bosh} and an ``effective'' version has been proved in \cite{BLMV} by Bourgain, Lindenstrauss, Michel and Venkatesh. Starting with Berend \cite{Ber}, several authors studied multidimensional generalisations of Theorem~\ref{thm:C0}.
\par\smallskip

Conjecture \ref{conj:C1} is largely open. It is known that if $F$ is a $T_p$-invariant subset of $\T$, then there exists a subsequence $(q^{n_k})$ such that $T_q^{n_k}(F)$ converges to $\T$ in the Hausdorff metric; see for instance \cite{Kra}*{Lemma 2.1}. Another result related to Conjecture \ref{conj:C1} can be found in \cite{MP}*{Th. 1.1}. Starting with the papers \cites{BerPerJLMS,AlonPeres} by Berend-Peres and Alon-Peres, several authors studied the so-called Glasner sets. A set $S$ of integers is said to be a \emph{Glasner set} if for every infinite closed subset $F$ of $\T$, there exists a sequence $(c_n)$ of elements in $S$ such that $T_{c_n}(F)$ converges to $\T$ in the Hausdorff metric. With this terminology, a result from \cite{Glasner} can be formulated as the fact that the set of integers is a Glasner set. Other quite small sets of integers are Glasner, like sets of positive (Banach) density or the sets of values assumed by any non-constant polynomial mapping the natural numbers to themselves. Note however that a finite union of lacunary sequences is not a Glasner set (\cite{BerPerJLMS}*{Th. 1.4}). Glasner sets have been also studied in the multidimensional setting.
\par\smallskip

The first result about the $\times p$, $\times q$ conjecture has been proved by Lyons in \cite{L2}, the first place where Conjecture~\ref{conj:C2} appeared in print: if \(p\) and \(q\) are relatively prime, any probability measure on \(\T\) which is \(T_{p}\)- and \(T_{q}\)-invariant and \(T_{p}\)-exact (i.e. has completely positive entropy with respect to $T_p$), must be the Lebesgue measure. 
Rudolph substantially strengthened this theorem in \cite{R}, showing that the conclusion is true with only the weaker assumption that the measure is ergodic under the joint action of \(T_{p}\) and \(T_{q}\), and of positive entropy under the action of \(T_{p}\). Johnson \cite{J} then generalised this to the case where  \(p\) and \(q\) are multiplicatively independent. A different argument, along the lines of Lyons~\cite{L2}, was given by Feldman \cite{Feldman}. Other different proofs were given by Host \cite{H} and Parry \cite{Parry}. In all these proofs the positive entropy remains a crucial assumption. The Rudolph-Johnson theorem has been used by Einsiedler and Fish \cite{EinsFish} to prove that a continuous Borel probability measure on $\T$ invariant under the action of a multiplicative semigroup with positive lower logarithmic density is the normalised Lebesgue measure. An important advance was made by Katok and Spatzier \cite{KS96}, who discovered that Rudolph’s proof can be extended to give partial information on invariant measures in much greater generality. We also mention the works \cites{Elon-padic,HochShm,Hoch,Hochman},
as well as the surveys \cites{Lind,EnsLind,LindMarg},
for an account of recent progress on measure rigidity for higher rank diagonal actions on homogeneous spaces.

\par\smallskip

Some partial results about Conjecture~\ref{conj:C3} (conjecture (C3) in \cite{L2}), which will be disproved in this manuscript, are also known. The study of convergence of the sequence \((T_{q^{n}}\mu )_{n\ge 0}\) to the Lebesgue measure for certain classes of \(T_{p}\)-invariant measures \(\mu \) lies at the core of the works of Lyons \cites{L1,L2}, Feldman and Smorodinsky \cite{FS}, Johnson and Rudolph \cite{JR}, and Host \cite{H}. Given \(p,q\ge 2\) two multiplicatively independent integers, it is shown in \cite{L1} (see also \cite{L2}) that if \(\mu \) is a non-degenerate \(p\)-Bernoulli measure, then 
\(T_{q^{n}}\mu\flwco \leb\). This is the notation for the \(w^{*}\)-convergence towards the normalised Lebesgue measure on $\T$ which is recalled in the next subsection. 
The main result of \cite{FS} states that under the same assumption, \(\mu \)-almost every \(x\in[0,1]\) is normal to the base \(q\). It is proved by Host in \cite{H} that whenever \(p\) and \(q\) are relatively prime, any measure \(\mu \in\pp{p}{}\) which is ergodic and has positive entropy with respect to \(T_{p}\) is such that \(\mu \)-almost every \(x\in[0,1]\) is normal to the base \(q\). The same statement has been proved by Lindenstrauss~\cite{Elon-padic} under the
assumption that $p$ does not divide any power of $q$ (which is weaker than Host’s
assumption). The generalisation to the case where \(p\) and \(q\) are multiplicatively independent was obtained by Hochman and Shmerkin \cite{HochShm}. Several multidimensional generalisations are discussed in \cites{Alg,Algom}. The Host-Lindenstrauss-Hochman-Shmerkin result implies easily a result by Johnson and Rudolph \cite{JR} that for every such ergodic and with positive entropy measure \(\mu \in\pp{p}{}\),
\[
\fr{}\sum_{n=0}^{N-1}T_{q^{n}}\mu \flw\leb.
\]
Johnson and Rudolph observe the following consequence: if \(\mu \in\pp{p}{}\) is ergodic and of positive entropy with respect to \(T_{p}\), then \(T_{q^{n}}\mu \flwco \leb\) on a sequence of \emph{Banach density one}
(called a sequence of uniform full density in \cite{JR}). As a consequence, they obtain that the set
\[
G'_{p,q}:=\{\mu \in\mathcal{P}_{p,c}(\T)\;;\;{T_{q^{n}}\mu \flw\textrm{Leb}}\ \textrm{along a sequence of upper density 1}\}
\]
is residual in \((\mathcal{P}_{p}(\T),w^{*})\). So, generically in the Baire Category sense, convergence of \(T_{q^{n}}\mu\) to the Lebesgue measure holds along a ``large'' sequence of integers. But the Baire Category arguments leave room for possible ``bad'' sequences where convergence to the Lebesgue measure, as predicted by Conjecture~\ref{conj:C3}, cannot be guaranteed. Quoting from \cite{JR}: ``\emph{As we have no examples showing such bad sequences can actually exist, perhaps it is possible by some more explicit investigation to eliminate these bad sequences along which convergence to the Lebesgue measure fails}''. Our first main result, which is Theorem \ref{Theorem 1} below, shows that generically such bad sequences do exist, and cannot be eliminated.

\subsection{Notation}\label{Notation}
Denote by \(\mathcal{P}(\T)\) the space of Borel  probability measures on \(\T\), and, for any $p\ge 2$, by \(\mathcal{P}_{p}(\T)\) the space of \(T_{p}\)-invariant measures \(\mu \in\mathcal{P}(\T)\). We endow \(\mathcal{P}(\T)\) with the topology of \(w^{*}\)-convergence of measures, which turns it into a compact metrizable space.
Recall that given measures $\mu_k$, $k\ge 1$, and $\mu$ belonging to \(\mathcal{P}(\T)\), we say that \(\mu_k\xrightarrow{\text{$w^*$}}\mu\) if 
$$\displaystyle\int_{\T} fd\mu_k\longrightarrow \int_{\T} fd\mu $$ as $k\longrightarrow +\infty$ for every $f\in C(\T)$, where $ C(\T)$ is the space of continuous functions on $\T$, endowed with the sup norm $||\,.\,||_{\infty,\T}$ on $\T$. This is equivalent to requiring that
$\hat{\mu}_k(a)\longrightarrow \hat{\mu}(a)$ for every $a\in\Z$, where the $a$-th Fourier coefficient of a measure $\nu\in \mathcal{P}(\T)$ is defined in this manuscript as
$$\hat{\nu}(a)=\int_{\T}z^a d\nu(z).$$
 We denote by \(\mathcal{P}_{c}(\T)\) the set of continuous (i.e. non-atomic) measures on \(\T\), and by \(\mathcal{P}_{p,c}(\T)\) the set of continuous \(T_{p}\)-invariant measures on \(\T\). 
Since \(\mathcal{P}_{p}(\T)\) is \(w^{*}\)-closed  in \(\mathcal{P}(\T)\), 
\((\mathcal{P}_{p}(\T),w^{*})\) is also a compact metrizable space. In particular, \((\mathcal{P}_{p}(\T),w^{*})\) is a Polish space, in which the Baire Category Theorem applies.  Recall that a subset of a Polish space is called \emph{residual} if it contains a dense \(G_{\delta }\) set (i.e. a countable intersection of dense open sets).
\par\smallskip

For our study of the multidimensional setting the following notation is required. For each \(d\ge 2\), we denote by \(\pp{}{d}\) the set of Borel probability measures on \(\td\), and by \(\pp{c}{d}\) the set of continuous measures \(\mu \in\pp{}{d}\). Given a matrix \(A\in M_{d}(\Z)\) with $\det(A)\neq 0$, we denote by \(T_{A}\) the associated transformation \(\xx\mapsto A\xx\mod 1\) of \(\td\) into itself. This transformation preserves the normalised Lebesgue measure on \(\td\), which we write as \(\leb_{d}\). Notice that \(T_{A}\) is an ergodic transformation of \((\td,\leb_{d})\) if and only if no eigenvalue of \(A\) is a root of unity. The set of \(T_{A}\)-invariant measures  on \(\td\) is denoted by \(\pp{A}{d}\), and \(\pp{A,c}{d}\) is the set of continuous \(T_{A}\)-invariant measures on \(\td\). 
\par\smallskip

\subsection{Main results}
Here is our first main result, showing that generically a continuous \(T_{p}\)-invariant probability measure $\mu$ on \(\T\) has some large Fourier coefficients along the sequence $(q^n)_{n\ge 0}$.
\begin{theorem}[large Fourier coefficients]\label{Theorem 1}
 Let \(p,q\ge 2\) be two 
 distinct integers. Then the set 
\[
S_{p,q} := \bigl\{\mu \in\pp{p,c}{}\;;\;\limsup\limits_{n\to+\infty}|\muc(q^n)|>0\bigr\}
\]
is residual in  \((\pp{p}{},\we)\). In particular, the set  
 \[
G_{p,q}:=\{\mu \in\mathcal{P}_{p,c}(\T)\;;\;T_{q^{n}}\mu\nflw \emph{Leb}\ \textrm{as}\ n\to+\infty\}
\]
is residual in \((\mathcal{P}_{p}(\T),w^{*})\), thus disproving Conjecture~\ref{conj:C3}.
 \end{theorem}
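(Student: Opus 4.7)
The plan is a Baire category argument on $(\pp{p}{},w^*)$. First, the subset $\pp{p,c}{}$ of continuous measures is residual: it is a dense $G_\delta$, the $G_\delta$ structure coming from atomlessness being a countable intersection of open conditions, and density from the fact that the convex combinations $(1-\varepsilon)\mu+\varepsilon\,\leb$ lie in $\pp{p,c}{}$ and converge $w^*$ to $\mu$ as $\varepsilon\to 0$. It therefore suffices to show that $E:=\{\mu\in\pp{p}{}:\wh{\mu}(q^n)\to 0\}$ is meager. Since $E\subseteq\bigcup_{N\geq 1}E_{N,k_0}$ with $E_{N,k_0}:=\bigcap_{n\geq N}\{\mu:|\wh{\mu}(q^n)|\leq 1/k_0\}$ closed, and countable unions of nowhere dense sets are meager, it is enough to exhibit one integer $k_0$ for which each $E_{N,k_0}$ has empty interior in $\pp{p}{}$.

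The central idea is a boosting measure $\mu_\diamond\in\pp{p}{}$ with $|\wh{\mu_\diamond}(q^n)|\geq c_p$ for every $n\geq 1$: for $p\geq 3$ take $\mu_\diamond=\delta_{1/(p-1)}$ (a $T_p$-fixed point, so $|\wh{\mu_\diamond}(a)|=1$ for every $a\in\Z$); for $p=2$ take the uniform measure on the period-$2$ orbit $\{1/3,2/3\}$ of $T_2$, whose Fourier coefficients take values in $\{1,-1/2\}$. Fix $k_0$ with $1/k_0<c_p$. The empty-interior statement for $E_{N,k_0}$ reduces to the following key lemma: for every $\mu_0\in\pp{p}{}$, every finite list $a_1,\dots,a_J\in\Z$, and every $N\geq 1$, there exists $\mu^\ast\in\pp{p}{}$ with $\wh{\mu^\ast}(a_j)=\wh{\mu_0}(a_j)$ for each $j\leq J$ and $|\wh{\mu^\ast}(q^n)|\geq c_p$ for some $n\geq N$. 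Granting this lemma, assume for contradiction that a basic $w^*$-neighborhood $U=\{\mu:|\wh{\mu}(a_j)-\wh{\mu_0}(a_j)|<\delta,\;j\leq J\}$ of some $\mu_0\in E_{N,k_0}$ is contained in $E_{N,k_0}$. The convex combination $\mu_s:=(1-s)\mu_0+s\mu^\ast\in\pp{p}{}$ satisfies $\wh{\mu_s}(a_j)=\wh{\mu_0}(a_j)$ \emph{exactly}, so $\mu_s\in U$ for every $s\in[0,1]$; meanwhile, because $|\wh{\mu_0}(q^n)|\leq 1/k_0$ for the distinguished $n\geq N$,
\[
|\wh{\mu_s}(q^n)|\;\geq\;sc_p-(1-s)/k_0,
\]
which exceeds $1/k_0$ as soon as $s>2/(c_pk_0+1)$. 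Since $c_pk_0>1$, such an $s<1$ exists, contradicting $\mu_s\in E_{N,k_0}$. Thus $E_{N,k_0}$ is nowhere dense, $E$ is meager, and $S_{p,q}$ is residual. The claim about $G_{p,q}$ is then immediate: if $\limsup_n|\wh{\mu}(q^n)|>0$, then $T_{q^n}\mu$ cannot converge $w^*$ to $\leb$, whose first Fourier coefficient is zero.

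The principal obstacle is the key lemma, which requires constructing a $T_p$-invariant probability measure interpolating $J$ prescribed Fourier moments of $\mu_0$ exactly while retaining a large Fourier coefficient at some $q^n$, $n\geq N$. The plan is to combine the periodic specification property of the expanding map $T_p$---which makes measures supported on finite unions of $T_p$-periodic orbits dense in $\pp{p}{}$ and gives enough linear flexibility over the rational phases to match finitely many prescribed Fourier moments---with the boosting measure $\mu_\diamond$, whose role is to control the $q^n$-coefficient. The ergodic decomposition of $\pp{p}{}$ provides the convex-combinatorial framework for fine-tuning, and a Hahn--Banach separation argument in the Banach space of $T_p$-invariant signed measures serves as the conceptual safety net, ruling out any degenerate configuration in which such interpolation would be impossible.
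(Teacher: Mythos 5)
Your Baire-category skeleton is sound: writing the complement of $S_{p,q}$ inside $\pp{p,c}{}$ as a countable union of the closed sets $E_{N,k_0}$ and showing each has empty interior would indeed suffice, and the convexity step at the end is fine (in fact redundant: since your key lemma demands \emph{exact} interpolation, $\mu^*$ itself already lies in $U\setminus E_{N,k_0}$, so you never need $\mu_s$). The genuine gap is that the key lemma, which carries essentially all of the mathematical content of the theorem, is not proved --- it is only announced together with a list of tools, and none of those tools supplies a mechanism forcing $|\wh{\mu^*}(q^n)|$ to be large at some $n\ge N$. The boosting measure in particular cannot do it: for $p\ge 3$ your $\mu_\diamond$ is a Dirac mass, so convolving with it merely rotates a measure and leaves every $|\wh{\nu}(q^n)|$ unchanged, while a convex combination $(1-t)\nu+t\mu_\diamond$ with $t$ bounded away from $0$ destroys the required (even approximate) matching of $\wh{\mu_0}(a_1),\dots,\wh{\mu_0}(a_J)$. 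What is missing is the arithmetic input on which the paper's proof rests: for a CO-measure $\mu_\lambda$ supported on the $T_p$-orbit of a $(p^{N}-1)$-th root of unity, the coefficient $\wh{\mu_\lambda}(a)$ depends only on $a\bmod (p^{N}-1)$, and --- this is the content of Lemma~\ref{Lemma 3.1} --- for $N$ ranging over a suitable infinite arithmetic progression the residues $q^{n}\bmod(p^{N}-1)$ return to a single fixed value $q^{a}$ for infinitely many $n$. Hence $\wh{\mu_\lambda}(q^{n})=\wh{\mu_\lambda}(q^{a})$ infinitely often, and $\wh{\mu_\lambda}(q^{a})$ is one fixed Fourier coefficient that can be made nonzero by mixing with $\delta_{1}$. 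Without controlling the periods of your approximating orbit measures in this way, a periodic-orbit measure close to $\mu_0$ (which specification does provide, via Theorem~\ref{Theorem 2.1}) has no reason whatsoever to have a large Fourier coefficient anywhere in a prescribed tail of the sequence $(q^{n})$; the appeals to ergodic decomposition and to a Hahn--Banach ``safety net'' do not substitute for this number theory.

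A secondary but real error: your density argument for $\pp{p,c}{}$ fails, because $(1-\varepsilon)\mu+\varepsilon\,\leb$ retains every atom of $\mu$ (with mass $(1-\varepsilon)\mu(\{x\})$) and so is not continuous unless $\mu$ already was. The standard repair, used in the paper, is convolution: take continuous $T_p$-invariant measures $\mu_k\flw\delta_{1}$ (for instance $p$-Bernoulli measures) and observe that $\mu_k*\mu$ is continuous, $T_p$-invariant, and converges $w^{*}$ to $\mu$.
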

 
We should note that the proof of Theorem~\ref{Theorem 1} does not require that $p$ and $q$ be multiplicatively independent; if $p$ and $q$ are powers of the same integer, then a simple and direct proof of Theorem~\ref{Theorem 1} can be given. We stress that positive entropy does not play any role in the proof of Theorem~\ref{Theorem 1}.
 \par\smallskip
By combining Theorem~\ref{Theorem 1} with \cite{JR}*{Theorem~8.2}, we can derive the following corollary.
 \begin{corollary}\label{cor:jr}
 Let \(p,q\ge 2\) be two multiplicatively independent integers. Then the set of all measures $\mu \in\mathcal{P}_{p,c}(\T)$ such that
 \[
T_{q^{n}}\mu \xslashedrightarrowb[]{ w^*} \emph{Leb}\ \textrm{as}\ n\to+\infty 
\]
and 
 \[
{T_{q^{n}}\mu \xrightarrow{ w^{*}} \emph{Leb}}\ \textrm{along a sequence of upper density 1}
\]
is residual in \((\mathcal{P}_{p}(\T),w^{*})\).
 \end{corollary}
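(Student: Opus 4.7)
The plan is to deduce the corollary as an immediate consequence of the Baire category theorem applied to two residual sets that have already been identified. Indeed, the set
\[
G_{p,q}=\{\mu \in\mathcal{P}_{p,c}(\T)\;;\;T_{q^{n}}\mu\nflw \text{Leb}\}
\]
is residual in $(\mathcal{P}_p(\T),w^*)$ by Theorem~\ref{Theorem 1} (note that this theorem does not require multiplicative independence, only $p\ne q$, so this step is cheap here). On the other hand, the set
\[
G'_{p,q}=\{\mu \in\mathcal{P}_{p,c}(\T)\;;\;T_{q^{n}}\mu\flw \text{Leb}\ \text{along a sequence of upper density } 1\}
\]
is residual in $(\mathcal{P}_p(\T),w^*)$ by \cite{JR}*{Theorem~8.2}, as recalled in Section~\ref{Section1a}; this is where we use the multiplicative independence of $p$ and $q$.

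Since $(\mathcal{P}_p(\T),w^*)$ is a compact metrizable space, and hence a Polish space in which the Baire category theorem applies, the intersection $G_{p,q}\cap G'_{p,q}$ of two residual sets is again residual. By construction, this intersection is precisely the set described in the statement of the corollary, so the conclusion follows.

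There is really no hard step here: the only content is the combination of our Theorem~\ref{Theorem 1} with the Johnson--Rudolph genericity statement. The interest of the corollary lies not in the proof but in the juxtaposition: generically the orbit $(T_{q^n}\mu)_{n\ge 0}$ \emph{does} converge $w^*$ to Lebesgue measure along a sequence of upper density one, yet at the same time (and contrary to the naive expectation formulated in Conjecture~\ref{conj:C3}) it \emph{fails} to converge $w^*$ to Lebesgue measure along the full sequence, because of the presence of ``bad'' times along which some Fourier coefficient $\widehat{\mu}(q^n)$ stays bounded away from $0$.
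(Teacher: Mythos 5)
Your proof is correct and is exactly the argument the paper intends: the corollary is obtained by intersecting the residual set $G_{p,q}$ from Theorem~\ref{Theorem 1} with the residual set $G'_{p,q}$ given by \cite{JR}*{Theorem~8.2}, and invoking the Baire Category Theorem in the Polish space $(\mathcal{P}_p(\T),w^*)$. Your remarks on where multiplicative independence is actually used, and on the interpretation of the juxtaposition, match the paper's discussion.
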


 Contrary to the proof of Theorem~\ref{Theorem 1}, the proof of Theorem~8.2. from \cite{JR} uses arguments depending on positive entropy. We shall present an alternative harmonic analysis approach to \cite{JR}*{Theorem~8.2} in Section~\ref{Section 5}.
 \par\smallskip
 
Meiri \cite{M} and Lindenstrauss, Meiri and Peres \cite{LMP} generalised the results from \cite{H} and \cite{JR} to certain classes of sequences \(\cn_{n\ge 0}\) other than the sequences \((q^{n})_{n\ge 0}\). More precisely (\cite{M}), if the sequence of remainders \((c_n\mod p^{N})_{0\le n<p^{N}}\), \(N\ge 1\), satisfies certain combinatorial properties, then every \(T_{p}\)-invariant ergodic measure \(\mu \) of positive entropy is such that \((c_n x)_{n\ge 0}\) is uniformly distributed mod \(1\) for \(\mu \)-almost every \(x\in[0,1]\).
A weaker combinatorial condition on the sequence \(\cnge\) is introduced in \cite{LMP}: if the so-called \(p\)\emph{-adic collision exponent} \(\Gamma _{p}(\cn)\) is less that \(2\), then every measure \(\mu \in\pp{p}{}\) which is ergodic and of positive entropy is \(\cn\)\emph{-generic} in the sense that
\[
\fr{}\sum_{n=0}^{N-1}T_{c_{n}}\mu \flw\leb.
\]
It follows that the set of measures \(\mu \in\pp{p,c}{}\) such that \(T_{c_{n}}\mu \flwco\leb\) along a sequence of upper density \(1\) is residual in \((\pp{p}{},w^{*})\).

Conjecture~\ref{conj:C3} thus fits in a much broader framework: given a strictly increasing sequence of integers \(\cnge\) of integers, is it true that the set 
\[
\gro{p,\cn}:=\bigl \{ \mu \in \pp{p,c}{}\;;\;T_{c_n}\mu \nflw\leb\quad \textrm{as}\ {n}\longrightarrow{+\infty}\bigr\} 
\]
is residual in \((\pp{p}{},w^{*})\)? We prove in Theorem \ref{Theorem 2.10} below a very general criterion  on the sequence \(\cnge\) implying an affirmative answer to this question. It allows to deal with most of the classes of sequences considered in \cite{M} and \cite{LMP}, and we obtain for instance the following theorem, which complements \cite{LMP}*{Th. 1.4} and \cite{M}*{Th. B}:
\begin{theorem}[linear recurrent sequences as $(c_n)$]\label{Theorem 2}
 Let \(\cnge\) be a sequence of integers satisfying a linear recursion of the form
 \[
c_{n}=a_{1}c_{n-1}+a_{2}c_{n-2}+\cdots+a_{L}c_{n-L},\quad n>L
\]
for some \(L\ge 1\) and integer coefficients \(a_{1},\dots,a_{L}\) with \(a_{L}\neq 0\). If the integers \(a_{L}\) and \(p\) are relatively prime, then the set 
\[
\gro{p,\cn}=\bigl \{ \mu \in\pp{p,c}{}\; : \;T_{c_{n}}\mu \nflw\emph{Leb}\quad \emph{as}\quad  n \to +\infty \bigr\} 
\]
is residual in \((\pp{p}{},w^{*})\).
\end{theorem}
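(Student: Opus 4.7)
The plan is to apply the general criterion of Theorem~\ref{Theorem 2.10} to the sequence $\cnge$. That theorem, proved earlier in the paper, gives a sufficient combinatorial condition on a sequence $\cnge$ of integers which ensures that $\gro{p,\cn}$ is residual in $(\pp{p}{},\we)$; roughly, one needs the residues $c_{n}\bmod p^{N}$ to be sufficiently rich as $n\to+\infty$. Our task is therefore to verify this hypothesis when $\gc(a_{L},p)=1$.

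The essential observation is that, under this coprimality assumption, the recursion is invertible modulo every power of~$p$. Writing it in companion-matrix form, the vector $\xx_{n}=(c_{n},c_{n+1},\dots,c_{n+L-1})^{T}$ satisfies $\xx_{n+1}=A\xx_{n}$ with $A\in M_{L}(\Z)$ and $\det(A)=\pm a_{L}$. Since $\gc(a_{L},p)=1$, the matrix $A$ lies in $GL_{L}(\Z/p^{N}\Z)$ for every $N\ge 1$; consequently the reduced sequence $(c_{n}\bmod p^{N})_{n\ge 0}$ is \emph{purely periodic} with some period $T_{N}$ dividing the (finite) order of $A$ in $GL_{L}(\Z/p^{N}\Z)$. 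In particular, for every $N\ge 1$ and every $n_{0}\ge 0$, the set $\{n\ge 0:\,c_{n}\equiv c_{n_{0}}\pmod{p^{N}}\}$ contains the full arithmetic progression $n_{0}+T_{N}\N$, and so meets every tail $[M,+\infty)$ with positive lower density $1/T_{N}$.

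This pure periodicity modulo $p^{N}$ is precisely the input that Theorem~\ref{Theorem 2.10} is designed to convert into residuality: near any prescribed $T_{p}$-invariant measure $\mu_{0}$ and any threshold $N\ge 1$, it produces a continuous $T_{p}$-invariant perturbation $\mu$ whose Fourier coefficient $\muc(c_{n})$ is bounded away from $0$ at some $n\ge N$; a standard Baire-category argument applied to the open sets $\bigcup_{n\ge N}\{\mu\in\pp{p}{}:\,|\muc(c_{n})|>\eta\}$ then delivers the residuality of $\gro{p,\cn}$. Invoking the criterion in this way concludes the proof.

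The main obstacle is aligning the precise form of the combinatorial richness derived above with the exact hypothesis of Theorem~\ref{Theorem 2.10}. If that hypothesis requires quantitative control on $T_{N}$, or a uniform-in-$N$ lower bound on the density with which prescribed residues are attained, then the crude estimate $T_{N}\le |GL_{L}(\Z/p^{N}\Z)|$ is much too weak and sharper information must be extracted by analysing the Jordan decomposition of $A\bmod p$ and lifting, in the usual Hensel-type fashion, from $\Z/p\Z$ to $\Z/p^{N}\Z$. I expect, however, that the statement of Theorem~\ref{Theorem 2.10} has been designed precisely so that the qualitative periodicity of $(c_{n}\bmod p^{N})_{n\ge 0}$, together with the fact that every residue in its orbit is visited by an arithmetic progression of values of $n$, is exactly what is needed to apply it.
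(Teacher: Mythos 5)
Your overall strategy --- reduce to the general criterion of Theorem~\ref{Theorem 2.10} and verify its hypothesis via the companion matrix of the recursion --- is the right one, and the observation that invertibility of the companion matrix modulo the relevant modulus yields pure periodicity of $(c_n)$ is indeed the engine of the paper's proof. But there is a genuine gap: you verify the wrong congruence condition. Assumption (H) of Theorem~\ref{Theorem 2.10} asks for congruences $h_l c_n \equiv t_i \bmod (p^N-1)$, \emph{not} modulo $p^N$; the paper explicitly warns that this is precisely where (H) differs from the collision-type conditions of Host, Meiri and Lindenstrauss--Meiri--Peres. The modulus $p^N-1$ is forced by the mechanism behind Theorem~\ref{Theorem 2.10}: the dense family of discrete measures from Theorem~\ref{Theorem 2.1} consists of orbit measures of $(p^{N_k}-1)$-th roots of unity, and the Fourier coefficient $\widehat{\mu_\lambda}(a)$ of such a measure depends only on $a \bmod (p^{N_k}-1)$. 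Periodicity of $(c_n)$ modulo $p^N$ --- which is what $\gcd(a_L,p)=1$ hands you for free --- gives no information about these measures.

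The passage to the modulus $p^N-1$ is not a formality, because $\gcd(a_L,p)=1$ says nothing about $\gcd(a_L,p^N-1)$, so the companion matrix need not be invertible modulo $p^N-1$ and $(c_n \bmod (p^N-1))_{n\ge 0}$ need not be purely periodic. This is exactly where the remaining number theory in the paper's proof lives: one factors $a_L=q_1^{b_1}\cdots q_s^{b_s}$, uses Fact~\ref{Fact 3.2} to produce $\gamma\ge 1$ and an infinite set $I$ of exponents $N$ for which $q_i^{\gamma}\nmid p^N-1$ for all $i$, and writes $p^N-1=h_N r_N$ with $h_N=\prod_i q_i^{\beta_{i,N}}$ and $\beta_{i,N}<\gamma$, so that $h_N$ ranges over a \emph{finite} set of non-zero integers and $\gcd(a_L,r_N)=1$. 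Your argument then applies with modulus $r_N$ in place of $p^N$: the companion matrix is invertible mod $r_N$, hence $c_{jn_N}\equiv c_0 \bmod r_N$ for all $j$, and multiplying through by $h_N$ converts this into $h_N c_{jn_N}\equiv h_N c_0 \bmod (p^N-1)$ --- which is precisely why (H) is formulated with the auxiliary multipliers $h_l$. Your closing worry about quantitative control on the period or on densities is a red herring: (H) only needs each congruence to hold for infinitely many $n$, and an arithmetic progression suffices. The genuinely missing step is the reduction from the modulus $p^N-1$ to its $a_L$-coprime part $r_N$ along a suitable infinite set of $N$'s.
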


Notice that when $L=1$ and $a_1=q$, Theorem~\ref{Theorem 2} reduces to the case studied in Theorem~\ref{Theorem 1}, but with the additional requirement that $p$ and $q$ are relatively prime.

\par\smallskip

We also mention the following related result. In \cite{BadGri}, a continuous probability measure $\mu$ on $\T$ was constructed with the property that for any increasing sequence $(c_n)$ in the multiplicative semigroup $\{p^mq^n : m,n\ge 0\}$, one has $T_{c_{n}}\mu \nflw\textrm{Leb}$. This disproved Conjectures (C4) and (C5) from \cite{L2}. However, it appears that the construction from \cite{BadGri} cannot be modified to produce a measure that is $T_p$-invariant.

\par\medskip

 We now move over to the multidimensional setting. 
 The equidistribution result \cite{H} of Host was generalised to the multidimensional setting by Meiri-Peres \cite{MP}, Host \cite{H2} himself and Algom \cite{Alg}. 
The general framework of these works is the following:
given two endomorphisms \(A\) and \(B\) of \(\T^{d}\) and a measure \(\mu \in\pp{A}{d}\), study the equidistribution properties of the sequence \((B^{n}\xx)_{n\ge 0}\) for \(\mu \)-almost every \(\xx\in\T^{d}\). This problem is studied in \cite{H2} when \(\mu \) is \(A\)-ergodic and has positive entropy, under the condition that $\det(A)$ and $\det(B)$ are relatively prime (which is exactly condition (b) of Theorem \ref{Theorem 3} below), plus some other assumptions on matrices \(A\) and \(B\). It is proved in  \cite{H2} that for every  ergodic measure \(\mu \in\pp{A}{d}\) of positive entropy, the sequence \((B^{n}\xx)_{n\ge 0}\) is uniformly distributed  in \(\td\) for \(\mu \)-almost every \(\xx\in \T^{d}\). The paper \cite{MP} considers the case where \(A\) and \(B\) are both diagonal matrices, \(A=\textrm{diag}(a_{1},\dots,a_{d})\), \(B=\textrm{diag}(b_{1},\dots,b_{d})\), with \(|a_{i}|>1\), \(|b_{i}|>1\), and \(\textrm{gcd}(a_{i},b_{i})=1\) for every \(i\in\{1\ldots d\}\).
\par\smallskip
Accordingly, we complement these results by showing a multidimensional version of Theorem \ref{Theorem 1}. 

\begin{theorem}[multidimensional setting]\label{Theorem 3}
 Let \(d\ge 2\) and let \(A,B\in M_{d}(\Z)\) with $\det (A)\neq 0$ and $\det (B)\neq 0$. Suppose that 
 \begin{enumerate}
  \item [\emph{(a)}] A is similar to a diagonal matrix \(D=\emph{diag}(\lambda _{1},\dots,\lambda _{d})\), where \(|\lambda _{j}|\neq 1\), \(1\le j\le d\);
 \end{enumerate}
 and either
 \begin{enumerate}
\item [\emph{(b)}] \(\det (A)\) and \(\det (B)\) are relatively prime;
  \end {enumerate}
  or
  \begin{enumerate}
\item [\emph{(b')}] \(A\) is upper or lower triangular.
 \end{enumerate}
 Then the set 
 \[
\gro{A,B}{}:=\bigl \{ \mu \in\pp{A,c}{d}\;;\;T_{B^{n}}\mu \nflw\emph{Leb}_{d}\bigr\} 
\]
is residual in \((\pp{A}{d},\we)\).

\end{theorem}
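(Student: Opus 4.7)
The plan is to adapt the Baire category framework of Theorem~\ref{Theorem 1} to the vectorial setting. Using the identity $\widehat{T_{B^{n}}\mu}(a)=\muc((B^{T})^{n}a)$, the condition $T_{B^{n}}\mu \nflw \leb_{d}$ is equivalent to the existence of some $a\in\Z^{d}\setminus\{0\}$ with $\muc((B^{T})^{n}a)\not\to 0$. Hence it suffices to exhibit a single nonzero $a\in \Z^{d}$ and a constant $\varepsilon >0$ for which
\[
S_{A,B}^{a,\varepsilon }:=\bigl \{ \mu \in \pp{A,c}{d}\;;\;|\muc((B^{T})^{n}a)|>\varepsilon \text{ for infinitely many } n\bigr \}
\]
is residual in $(\pp{A}{d},\we)$. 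Each slice $\{\mu :|\muc((B^{T})^{n}a)|>\varepsilon \}$ is $\we$-open, and $\pp{A,c}{d}$ is itself a dense $G_{\delta }$ in $\pp{A}{d}$ by the same argument as in Theorem~\ref{Theorem 1}, so $S_{A,B}^{a,\varepsilon }$ is automatically a $G_{\delta }$; only its density remains to be established.

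The density argument would mirror the one-dimensional case. Starting from an arbitrary $\mu _{0}\in \pp{A}{d}$, a $\we$-neighbourhood $V$ of $\mu _{0}$, and a threshold $N_{0}$, I would construct $\mu =(1-\eta )\mu _{0}+\eta \,\nu \in V$ (with $\eta >0$ small) where $\nu \in \pp{A,c}{d}$ satisfies $|\wh{\nu }((B^{T})^{N}a)|$ close to $1$ for some $N\ge N_{0}$. Such a $\nu $ is produced by a Hahn--Banach argument within the compact convex set $\pp{A,c}{d}$: one smears a $T_{A}$-periodic orbit---abundant by hypothesis (a), which makes $T_{A}$ an expansive hyperbolic endomorphism of $\td$ enjoying the periodic specification property---along an unstable direction of $A$ so as to yield a thin continuous $T_{A}$-invariant measure concentrated on a slab on which $x\mapsto e^{2\pi i\langle (B^{T})^{N}a,\,x\rangle }$ is nearly constant modulo $1$.

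The alternative hypotheses (b) and (b') enter at the stage of actually producing this concentrated measure for infinitely many $N$. Under (b), the coprimality of $\det (A)$ and $\det (B)$ prevents $(B^{T})^{N}a$ from being swept by the $A^{T}$-action into the orbits that would force the Fourier coefficient to vanish by invariance, and a Bernoulli-type construction analogous to the one used in Theorem~\ref{Theorem 1} delivers the required $\nu $. Under (b'), the triangular form of $A$ furnishes a one-dimensional $T_{A}$-invariant subtorus on which $T_{A}$ acts as multiplication by some integer $\lambda _{j}$ with $|\lambda _{j}|>1$; applying Theorem~\ref{Theorem 1} to $\lambda _{j}$---or more generally Theorem~\ref{Theorem 2}, since $(\pi _{j}((B^{T})^{N}a))_{N}$ satisfies the characteristic-polynomial recursion of $B^{T}$---yields a continuous one-dimensional witness $\mu _{1}$, which is extended by Dirac masses on the complementary coordinates to give the auxiliary $\nu $ on $\td$.

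The principal obstacle is the simultaneous enforcement on $\nu $ of the three constraints: $T_{A}$-invariance (a linear constraint amenable to Hahn--Banach), non-atomicity, and Fourier concentration at the moving target $(B^{T})^{N}a$. Hypothesis (a) is what makes the reconciliation possible: the eigenvalues with $|\lambda _{j}|>1$ supply expanding directions of $T_{A}$ along which thin continuous measures with large Fourier coefficients can be placed, while the contracting directions provide the averaging that preserves $T_{A}$-invariance without destroying continuity. The arithmetic condition (b) or the structural condition (b') is precisely what ensures that the moving target $(B^{T})^{N}a$ actually meets this construction for infinitely many $N$ rather than eventually landing in a direction incompatible with $T_{A}$-invariance.
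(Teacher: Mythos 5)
Your proposal captures the right overall shape (a $G_\delta$-plus-density Baire category argument, with periodic points supplied by (a) and arithmetic supplied by (b)/(b')), but several of the mechanisms you invoke do not work, and they are precisely the points where the paper has to do something different. First, assumption (a) does \emph{not} make $T_A$ a map with the periodic specification property: that property is known for hyperbolic toral \emph{automorphisms} ($\det A=\pm1$), whereas here $A$ may be non-invertible over $\Z$ and may have eigenvalues inside the unit disc. The paper explicitly flags this and replaces specification by a weaker statement (Theorem~\ref{Theorem 4.10}): only the \emph{convex hull} of the periodic-orbit measures $\dd_{A,(\nk)}$ is shown to be dense, via a Hahn--Banach separation argument combined with the ergodic decomposition and Birkhoff's theorem, using (a) only to solve $(A^{\nk}-I)\zzk=\llk$ and to control $|f(A^{j}\xx)-f(A^{j}\xxk)|$ through the eigenvalue bounds. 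Second, your density step $\mu=(1-\eta)\mu_0+\eta\nu$ with an \emph{arbitrary} $\mu_0$ cannot work: the coefficient $\muc((B^{T})^{n}a)=(1-\eta)\muc_0((B^{T})^{n}a)+\eta\wh{\nu}((B^{T})^{n}a)$ contains an uncontrolled term coming from $\mu_0$ which can cancel the contribution of the witness $\nu$; moreover it is at most $\eta$ in modulus near $\textrm{Leb}_d$, so it cannot establish density of your fixed-$\varepsilon$ set $S^{a,\varepsilon}_{A,B}$. The paper avoids both problems by first replacing $\mu_0$ by a convex combination of periodic-orbit measures $\mu_{\xx_i}$ and defining the $G_\delta$ set with the threshold $\gamma|\muc(\hh_0)|$ depending on $\mu$ rather than a fixed $\varepsilon$.

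The heart of the matter, which is absent from your sketch, is the exact recurrence of a Fourier coefficient at a single \emph{fixed} frequency: Fact~\ref{Fact 4.12} produces $N_k$ and a finite set $F\subseteq\Z\setminus\{0\}$ with $\det(A^{\nk}-I)=h_k r_k$, $h_k\in F$, $\gc(r_k,\det B)=1$; then $B$ is invertible mod $r_k$, so $h_0B^{m_k}\equiv h_0 I\bmod\det(A^{\nk}-I)$ for $h_0=\prod F$, and since a periodic point $\xx$ satisfies $\xx=\frac{1}{\det(A^{\nk}-I)}\,\textrm{adj}(A^{\nk}-I)\lle_k$ one gets $\wh{B^{lm_k}\mu_{\xx}}(\hh_0)=\muc_{\xx}(\hh_0)$ \emph{exactly} for all $l$. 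This exactness is what makes convex combinations usable. Your descriptions of how (b) and (b') enter do not supply this. In particular, your reduction of case (b') to Theorem~\ref{Theorem 2} on an invariant subcircle fails: the sequence $c_n=((B^{T})^{n}a)_d$ satisfies the recursion of the characteristic polynomial of $B$, whose last coefficient is $\pm\det B$, so Theorem~\ref{Theorem 2} would require $\gc(\det B,a_{dd})=1$ --- a coprimality that is not assumed under (b'). (The paper's Case 2 of Fact~\ref{Fact 4.12} instead applies Fact~\ref{Fact 3.2} to each diagonal entry separately, needing no coprimality.) Finally, the ``thin continuous measure on a slab'' is not needed: continuity is handled separately by proving that $\pp{A,c}{d}$ is a dense $G_\delta$ (Fact~\ref{Fact 3.14}, a Sigmund-type argument using prime periods) and intersecting.
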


\subsection{Overview} 
The paper is organised as follows. We present in Section \ref{Section 2} a general criterion on a sequence \(\cnge\) of integers implying that the set $\gro{p,\cn}$
is residual in \((\pp{p}{}, \we)\). This criterion is the object of Theorem \ref{Theorem 2.10}. Its proof relies on a density result for certain classes of discrete measures in \((\pp{p}{},\we)\) (Theorem \ref{Theorem 2.1}), which is of interest in itself and involves the so-called periodic specification property of the transformation $T_p$. We present in Section \ref{Section 3} various examples of sequences considered  in \cite{M} and \cite{LMP} which satisfy the assumptions of Theorem \ref{Theorem 2.10}, and derive Theorems \ref{Theorem 1} and \ref{Theorem 2} from Theorem \ref{Theorem 2.10}. The multidimensional case is treated in Section \ref{Section 4}. Since assumption (a) of Theorem \ref{Theorem 3} does not necessarily imply that \(T_{A}:\T^{d}\longrightarrow\T^{d}\) has the periodic specification property, we need a different argument (Theorem \ref{Theorem 4.10}) in order to show the density  in \((\pp{A}{d},\we)\) of the relevant classes of \(T_{A}\)-invariant measures. We discuss in Section \ref{Section 5} a different approach to the Johnson-Rudolph result of \cite{JR} that the set \[G\,'_{\!p,q}:=\{\mu \in\mathcal{P}_{p,c}(\T)\;;\;{T_{q^{n}}\mu \flw\textrm{Leb}}\ \textrm{along a sequence of upper density 1}\}\] is residual in \((\pp{p}{},\we)\) for multiplicatively independent integers \(p\) and \(q\), and present some related results and open questions.

%\section{Classes of \(\times_{p}\)-invariant measures with some large Fourier coefficients}\label{Section 2}
\section{Classes of times p-invariant measures with some large Fourier coefficients}\label{Section 2}
In the whole section, \(p\ge 2\) will be a fixed integer. Let \(\cnge\) be a strictly increasing sequence of integers. We say that \(\cnge\) satisfies \emph{assumption} (H) if the following is true:
\par\bigskip
(H)\hfill
 \begin{minipage}[c]{13cm}
  There exist finitely many nonnegative integers \(t_{1},\dots,t_{r}, h_{1},\dots,h_{d}\) with \(h_{l}\neq 0\) for every $l\in\{1,\ldots, d\}$, and an infinite subset \(I\) of \(\N\) such that for every \(N\in I\), there exist \(i\in\{1,\dots,r\}\) and \(l\in\{1,\dots,d\}\) with the property that \(h_{l}c_{n}\equiv t_{i}\mod (\pnm )\) for infinitely many integers \(n\).
 \end{minipage}
 
 \par\bigskip
Our aim in this section is to prove the following theorem:

\begin{theorem}\label{Theorem 2.10}
 Let \(\cnge\) be a strictly increasing sequence of integers satisfying assumption \emph{(H)}. Then the set 
 \[
\gro{p,\cn}=\bigl \{ \mu \in\pp{p,c}{}\;;\;T_{c_{n}}\mu \nflw\emph{\leb}\quad\textrm{as }n\longrightarrow+\infty\bigr\} 
\]
is residual in \((\pp{p}{},\we)\).
\end{theorem}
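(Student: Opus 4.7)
The plan is to realise $\gro{p,\cn}$ as the intersection of two subsets of $\pp{p}{}$ each containing a dense $G_{\delta}$ of $(\pp{p}{},\we)$. On the one hand, $\pp{p,c}{}$ is a dense $G_{\delta}$ in $\pp{p}{}$: it is $G_{\delta}$ because, by Portmanteau's theorem, $\bigl\{\mu\in\pp{p}{}:\exists\,y,\ \mu(\{y\})\ge 1/k\bigr\}$ is $\we$-closed for every $k\ge 1$ (indeed, if $\mu_{n}\to\mu$ weakly and each $\mu_{n}$ has an atom of mass $\ge 1/k$ at $x_{n}\to x$, then $\mu(\bar{B}(x,\eta))\ge\limsup_{n}\mu_{n}(\bar{B}(x,\eta))\ge 1/k$ for every $\eta>0$, so $\mu(\{x\})\ge 1/k$). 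On the other hand, since $\widehat{T_{c_{n}}\mu}(a)=\muc(ac_{n})$, the convergence $T_{c_{n}}\mu\flw\leb$ is equivalent to $\muc(ac_{n})\to 0$ for every $a\in\Z\setminus\{0\}$, so that the set
\[
S\;:=\;\Bigl\{\mu\in\pp{p}{}\;:\;\exists\,l\in\{1,\dots,d\},\ \limsup_{n\to+\infty}|\muc(h_{l}c_{n})|>0\Bigr\}
\]
satisfies $S\cap\pp{p,c}{}\subseteq\gro{p,\cn}$ (each $h_{l}$ being a nonzero integer). It will therefore suffice to show that $S$ is residual in $(\pp{p}{},\we)$.

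Fix $\varepsilon\in(0,1)$ (say $\varepsilon=1/2$) and, for each $N_{0}\ge 1$, consider the $\we$-open set
\[
V_{\varepsilon,N_{0}}\;:=\;\Bigl\{\mu\in\pp{p}{}\;:\;\exists\,n\ge N_{0},\ \exists\,l\in\{1,\dots,d\},\ |\muc(h_{l}c_{n})|>\varepsilon\Bigr\}.
\]
Then $\bigcap_{N_{0}\ge 1}V_{\varepsilon,N_{0}}\subseteq S$, and everything reduces to proving that each $V_{\varepsilon,N_{0}}$ is dense in $(\pp{p}{},\we)$. The density argument hinges on a simple Fourier identity: for any convex combination $\mu$ of the periodic orbit measures $\mu_{k,N}=\frac{1}{N}\sum_{j=0}^{N-1}\de{p^{j}k/(p^{N}-1)}$ and any integer $a\equiv t\pmod{p^{N}-1}$, writing $a-t=m(p^{N}-1)$ gives $e^{2\pi i a p^{j}k/(p^{N}-1)}=e^{2\pi i t p^{j}k/(p^{N}-1)}$ (since $mp^{j}k\in\Z$), whence $\muc(a)=\muc(t)$. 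In particular, whenever $h_{l}c_{n}\equiv t_{i}\pmod{p^{N}-1}$, one has $|\muc(h_{l}c_{n})|=|\muc(t_{i})|$.

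Given $\mu_{0}\in\pp{p}{}$, a $\we$-neighbourhood $U$ of $\mu_{0}$, and $N_{0}\ge 1$, the density of $V_{\varepsilon,N_{0}}$ will be established as follows. We invoke Theorem~\ref{Theorem 2.1}, whose proof exploits the periodic specification property of $T_{p}$, to produce---for any $N\in I$ chosen large enough---a measure $\mu\in U$ that is a convex combination of periodic orbit measures $\mu_{k,N}$ of common period $N$ and that in addition satisfies $|\muc(t_{i})|>\varepsilon$ simultaneously for every $i\in\{1,\dots,r\}$. Heuristically, this is arranged by splicing into the periodic orbit shadowing $\mu_{0}$ an auxiliary short orbit block clustered near $0$, whose contribution to each $|\muc(t_{i})|$ is close to $1$ yet whose effect on the $\we$-topology is negligible. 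Once $N\in I$ is fixed, hypothesis (H) furnishes indices $i,l$ and infinitely many $n$ with $h_{l}c_{n}\equiv t_{i}\pmod{p^{N}-1}$; by taking $N$ large enough, at least one such $n$ exceeds $N_{0}$. For this $n$, the Fourier identity gives $|\muc(h_{l}c_{n})|=|\muc(t_{i})|>\varepsilon$, so $\mu\in U\cap V_{\varepsilon,N_{0}}$, as required.

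The principal obstacle is thus the content of Theorem~\ref{Theorem 2.1}: to realise, within a single $T_{p}$-invariant measure $\mu$ supported on periodic orbits of period $N\in I$, both a $\we$-approximation of an arbitrary $\mu_{0}\in\pp{p}{}$ and the uniform Fourier lower bounds $|\muc(t_{i})|>\varepsilon$ for $1\le i\le r$. The periodic specification property of $T_{p}$ is exactly what enables juxtaposing, within one periodic orbit, orbit segments tracking $\mu_{0}$ and auxiliary segments engineered to force the finitely many relevant Fourier coefficients to be large. The combinatorial bookkeeping that guarantees $N\in I$ can be chosen so as to accommodate both the approximation and the existence of infinitely many $n\ge N_{0}$ with $h_{l}c_{n}\equiv t_{i}\pmod{p^{N}-1}$ then completes the argument.
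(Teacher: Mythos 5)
Your overall architecture matches the paper's: intersect the dense $G_{\delta}$ set $\pp{p,c}{}$ with a residual set of measures satisfying $\limsup_{n}|\muc(h_{l}c_{n})|>0$ for some $l$, and prove density using Theorem~\ref{Theorem 2.1} together with the congruence identity $\muc(h_{l}c_{n})=\muc(t_{i})$ for measures supported on period-$N$ orbits. However, the quantitative step at the heart of your density argument fails. You fix an absolute threshold $\varepsilon=1/2$ and claim that every $\we$-neighbourhood $U$ of every $\mu_{0}\in\pp{p}{}$ contains a convex combination $\mu$ of period-$N$ orbit measures with $|\muc(t_{i})|>\varepsilon$ for all $i$. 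But the frequencies $t_{1},\dots,t_{r}$ are fixed once and for all by hypothesis (H), so each map $\mu\mapsto\muc(t_{i})$ is $\we$-continuous; if some $t_{i}\neq 0$ and $\widehat{\mu_{0}}(t_{i})=0$ (take $\mu_{0}=\leb$), then every $\mu$ in a sufficiently small $U$ has $|\muc(t_{i})|<1/2$, so $U$ does not meet your target set at all. Your splicing heuristic is self-contradictory for the same reason: an auxiliary orbit block of relative length $L/N$ near $0$ shifts $\muc(t_{i})$ by at most roughly $L/N$, so its contribution can be ``close to $1$'' only if $L/N$ is close to $1$, in which case $\mu$ is close to $\delta_{1}$ rather than to $\mu_{0}$. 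Consequently the sets $V_{1/2,N_{0}}$ need not be dense, and the argument does not close.

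The paper circumvents exactly this obstruction by replacing the absolute threshold with the measure-dependent one $\gamma\min_{1\le j\le r}|\muc(t_{j})|$: the relevant $G_{\delta}$ set requires only that all $\muc(t_{j})$ be nonzero and that $|\muc(h_{l}c_{n})|$ exceed $\gamma\min_{j}|\muc(t_{j})|$ infinitely often, which still forces $\limsup_{n}|\muc(h_{l}c_{n})|>0$ and hence $T_{c_{n}}\mu\nflw\leb$. Density then needs only the exact identity $\muc(h_{l}c_{n})=\muc(t_{i})$ for infinitely many $n$ (valid for CO-measures with period $N\in I$, by (H)) together with nonvanishing of the finitely many $\muc(t_{j})$; the latter is arranged by the harmless perturbation $\mu_{\rho}=\rho\delta_{1}+(1-\rho)\mu$ with $\rho$ small, which stays in $U$, makes every $\muc_{\rho}(t_{j})$ nonzero, and preserves the identity since $\widehat{\delta_{1}}\equiv 1$. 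If you replace your $V_{\varepsilon,N_{0}}$ by the paper's set $\gro{p,\cn}^{\,\gamma}$ (equivalently, let your $\varepsilon$ depend on $\mu$ through $\min_{j}|\muc(t_{j})|$), the remainder of your argument goes through.
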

Condition (H) may look somewhat technical, but it is actually a rather weak one. We shall exhibit in Section \ref{Section 3} many examples of sequences \(\cnge\) satisfying (H). In particular, the sequence \(\cn=(q^{n})\) satisfies it for any \(q\ge 2\). This disproves Conjecture~\ref{conj:C3}.

Assumption (H) is of the same nature as the congruence assumptions mod \( p^{N}\) which appear in the works of Host \cite{H} and Meiri \cite{M}, and which are formalised in terms of \(p\)-adic collision exponent in \cite{LMP}. These two assumptions are nonetheless different, be it only because (H) involves congruences mod \((\pnm )\), while the \(p\)-adic collision exponent is defined in terms of congruence mod \( p^{N}\).
\par\medskip
Our main tool for the proof of Theorem \ref{Theorem 2.10} is a density result for certain families of discrete \(T_{p}\)-invariant measures on \(\T\).

\subsection{Density of discrete times p-invariant measures}
The periodic points of the transformation \(T_{p}\) are exactly the points \(\lambda \in\T\) such that \(\lambda ^{p^{N}}=\lambda \) for some \(N\ge 1\). In this case, the probability measure \(\mu _{\lambda }\) on \(\T \), defined  as 
\[
\mu _{\lambda }=\fr{}\sum_{j=0}^{N-1}\de{\lpj},
\]
is a discrete \(T_{p}\)-invariant measure on $\T$ whose support is the orbit of the point \(\lambda \) under the action of \(T_{p}\).
It is ergodic for \(T_{p}\), and the set of all such measures (where \(\lambda \) varies over the set of all \((p^{N}-1)\)-th roots of 1, \(N\ge 1\)) is dense in \((\mathcal{P}_{p}(\T),w^{*})\) \cites{S1,S2}. 
\par\smallskip
This density property is deeply linked to the fact that the dynamical system \((\T,T_{p})\) has the so-called \emph{specification property} introduced by Bowen in \cite{B} (see also \cites{S1,S2}). Since it will be needed in the sequel, we recall here the definition from \cite{S1}. The setting is that of compact dynamical systems \((X,T)\), where \((X,d)\) is a compact metric space and \(T\) is a continuous self-map of \(X\). This property is often referred to as the \emph{periodic specification property}, and it is the terminology we shall use here. The article \cite{KLO} contains an overview of the specification property and its many variants.

\begin{definition}\label{Definition 4}
 The system \((X,T)\) is said to have the \emph{periodic specification property} if for every \(\varepsilon >0\) there exists \(N_{\varepsilon }\in\N\) such that for every integers 
 \(0\le a_{1}\le b_{1}\) and \(0\le a_{2}\le b_{2}\) with \(a_{2}-b_{1}>N_{\varepsilon }\), for every vectors \(x_{1},x_{2}\in X\), and for every integer \(d>b_{2}-a_{1}+N_{\varepsilon }\), there exists a periodic point \(x\) for \(T\) with period $d$ such that 
 \begin{enumerate}
  \item [(i)]\(d(T^{j}x,T^{j}x_{1})<\varepsilon \)\quad  for every \(j=a_{1},\ldots, b_{1}\);
  \item [(ii)]\(d(T^{j}x,T^{j}x_{2})<\varepsilon \)\quad  for every \(j=a_{2},\ldots,b_{2}\).
 \end{enumerate}
\end{definition}
If \(x\) is periodic for \(T\) with period \(d\), the measure
\[\mu _{x}=\dfrac{1}{d}\ds\sum_{j=0}^{d-1}\de{T^{j}x}\] is called a \emph{CO-measure}. Here \emph{CO} stands for \emph{Closed-Orbit}; see for instance Sigmund~\cite{S2}. If \((X,T)\) has the specification property, the set of CO-measures is dense in the set  of \(T\)-invariant Borel probability measures on $X$ (see \cite{S2}*{Th. 1}).
\par\smallskip
Let \((\nk )_{k\ge 1}\) be a strictly increasing sequence of integers. We denote by \(\cro{p,(\nk )}\) the set of all \((p^{\nk }-1)\)-th roots of \(1\):
\[
\cro{p,(\nk )}=\bigl \{ \lambda \in\T\;;\;\lambda ^{p^{\nk }-1}=1\quad \textrm{for some}\ k\ge 1\bigr\}.
\]
Let \(\dro{p,(\nk )}\) be the family of CO-measures associated to elements \(\lambda \) of \(\cro{p,(\nk )}\):
\[
\dro{p,(\nk )}=\bigl \{\mu _{\lambda }\;;\;\lambda \in \cro{p,(\nk )} \bigr\}.
\]
\par\smallskip
We are now going to prove the following density result, which will be crucial for the proof of Theorem \ref{Theorem 1}:
\begin{theorem}\label{Theorem 2.1}
 The set \(D_{p,(N_{k})}\) is dense in \((\mathcal{P}_{p}(\T),w^{*})\).
\end{theorem}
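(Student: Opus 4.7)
The strategy is to exploit the periodic specification property of $(\T, T_p)$, whose crucial feature in Definition~\ref{Definition 4} is that the period of the shadowing periodic orbit can be prescribed to be \emph{any} integer $d$ exceeding $b_2 - a_1 + N_\varepsilon$. This freedom is precisely what allows one to force the period into the given sequence $(\nk)$, at the price of a vanishing shadowing error.

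I would begin by reducing the problem to the approximation of an arbitrary CO-measure. Since $(\T, T_p)$ satisfies the periodic specification property, Sigmund's theorem (as recalled in this section) gives density in $(\pp{p}{}, \we)$ of the set of all CO-measures, of arbitrary period. Hence, given $\mu \in \pp{p}{}$ and a weak-$*$ neighbourhood $V$ of $\mu$, it is enough to approximate any CO-measure of the form $\mu_x = \frac{1}{d}\sum_{j=0}^{d-1}\de{T_p^j x}$, with $T_p^d x = x$, by elements of $\dro{p,(\nk)}$.

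For such a $\mu_x$, I would fix $\varepsilon > 0$ and apply the periodic specification property with $x_1 = x_2 = x$ and $[a_1, b_1] = [a_2, b_2] = [0, M-1]$, where $M = \nk - N_\varepsilon - 1$ for some $k$ chosen large so that $\nk > N_\varepsilon + 1$. The definition allows us to prescribe the period to be exactly $\nk$, yielding a periodic point $\lambda \in \T$ with $T_p^{\nk}\lambda = \lambda$, i.e., $\lambda^{p^{\nk}-1} = 1$, so that $\lambda \in \cro{p,(\nk)}$ and $\mu_\lambda \in \dro{p,(\nk)}$, together with the shadowing inequality $d(T_p^j \lambda, T_p^j x) < \varepsilon$ for $0 \leq j \leq M - 1$. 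Testing against a continuous function $f \in C(\T)$, uniform continuity bounds the contribution of the first $M$ terms by $\omega_f(\varepsilon)$, the remaining $\nk - M = N_\varepsilon + 1$ terms contribute at most $2\|f\|_{\infty,\T}(N_\varepsilon + 1)/\nk$, and the periodicity of $x$ with period $d$ implies that $\frac{1}{M}\sum_{j=0}^{M-1} f(T_p^j x)$ differs from $\int f\, d\mu_x$ by $O(d\|f\|_{\infty,\T}/M)$. For $\varepsilon$ sufficiently small and $k$ sufficiently large, all three errors fall below any prescribed tolerance, and working with a countable dense family of test functions (say trigonometric polynomials) makes the estimate uniform on the finite family pinning down $V$.

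The main obstacle I anticipate is a bookkeeping one rather than a conceptual one: balancing the shadowing error $\varepsilon$, the segment length $M$, and the period $\nk$ so that each of the three sources of error fits inside $V$. Notably, no arithmetic property of $(\nk)$ is used beyond it being strictly increasing, which reflects the fact that the periodic specification property, in its strong form, supplies periodic orbits of \emph{any} sufficiently large prescribed period.
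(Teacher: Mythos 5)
Your proposal is correct and follows essentially the same route as the paper: reduce to CO-measures by Sigmund's theorem, then use the freedom to prescribe the period $d=\nk$ in the periodic specification property to land in \(\dro{p,(\nk)}\), and control the three error terms (shadowing, truncation, and the discrepancy between \(\nk\) and a multiple of the original period). The only adjustment needed is in the formal invocation of Definition~\ref{Definition 4}: taking \([a_1,b_1]=[a_2,b_2]\) violates the requirement \(a_2-b_1>N_\varepsilon\), so one must insert a dummy second block \(a_2=b_2=b_1+N_\varepsilon+1\) and shrink the shadowed segment to length roughly \(\nk-2N_\varepsilon\) (as the paper does), which leaves all your \(O(N_\varepsilon/\nk)\) estimates intact.
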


\begin{proof}
Our aim is to show that given \(\mu \in\pp{p}{}\), \(f_{1},\dots,f_{l}\) belonging to \(C(\T)\), and \(\varepsilon >0\), there exists \(\lambda \in\cro{p,(\nk )}\) such that
\[
\Bigl | \int_{\T}f_{i}\,d\mu _{\lambda }-\int_{\T}f_{i}\,d\mu \Bigr|<\varepsilon \quad \textrm{for every}\ i\in\{1,\ldots,l\}.
\]
Since CO-measures are \(\we\)-dense in \(\pp{p}{}\), we can suppose without loss of generality that \(\mu \) is a CO-measure, which we write as
\[
\mu _{z}=\fr{}\sum_{j=0}^{N-1}\de{\zpj}\quad \textrm{for some}\ z\in\T\ \textrm{and}\ N\ge 1\ \textrm{such that}\ z^{\pnm }=1.
\]
Because Lipschitz functions, with respect to the distance induced by \(\C\) on \(\T\), are dense in $C(\T)$ by the Stone-Weierstrass theorem, we can also suppose without loss of generality that the functions \(f_{1},\dots,f_{l}\) are Lipschitz.
%(for the distance induced by \(\C\) on \(\T\)). 
Let \(C>0\) be such that for every \(i\in\{1,\ldots,l\}\) and every \(z_{1},z_{2}\in\T\), \(|f_{i}(z_{1})-f_{i}(z_{2})|\le C\,|z_{1}-z_{2}|\). We are looking for \(\lambda \in\T\) and \(k\ge 1\) with \(\la=1\) such that
\[
\Bigl|\fr{}\sum_{j=0}^{N-1}f_{i}(\zpj)-\fr{k}\sum_{j=0}^{\nk -1}f_{i}(\lpj)\bigr|<\varepsilon \quad \textrm{for every}\ i\in\{1,\ldots,l\}.
\]
Fix \(\varepsilon '>0\). Let \(N_{\varepsilon '}\) be given by the specification property. Let \(k\ge 1\) be such that  \(\nk >2N_{\varepsilon }'+2\). Applying Definition \ref{Definition 4} to \(x_{1}=x_{2}=z\), \(a_{1}=0\), \(b_{1}=\nk -2N_{\varepsilon' }-2\), \(a_{2}=b_{2}=b_{1}+N_{\varepsilon '}+1\), and \(d=\nk \), we obtain the existence of \(\lambda \in\T\) with \(\lambda ^{p^{\nk }}=\lambda \) such that, for every  $j=0,\ldots, \nk -2N_{\varepsilon' }-2$,
\[
\bigl | \zpj -\lpj\bigr|<\varepsilon ', \quad \textrm{ and hence } \quad\bigl |f_{i}(\zpj)-f_{i}(\lpj) \bigr|\le C\varepsilon '.
\]
Then, for every \(i\in\{1,\ldots,l\}\),
\[
\Bigl | \dfrac{1}{\nk -2N_{\varepsilon' }-1}\sum_{j=0}^{\nk -2N_{\varepsilon' }-2}f_{i}(\zpj)-\dfrac{1}{\nk -2N_{\varepsilon' }-1}\sum_{j=0}^{\nk- 2N_{\varepsilon' }-2}f_{i}(\lpj)\Bigr|\le C\varepsilon '.
\]
Now
\begin{align*}
    \Bigl | \dfrac{1}{\nk -2N_{\varepsilon' }-1}\sum_{j=0}^{\nk -2N_{\varepsilon' }-2}f_{i}(\zpj)  
    &-\dfrac{1}{N_k}\sum_{j=0}^{\nk -1}f_{i}(\zpj)\Bigr| \le \fr{k} \sum_{j=\nk -2N_{\varepsilon'}-1}^{\nk -1}|f_{i}(\zpj)|\\ 
    &+\Bigl |\dfrac{1}{\nk -2N_{\varepsilon' }-1}-\dfrac{1}{N_k} \Bigr|\sum_{j=0}^{\nk -2N_{\varepsilon' }-2}|f_{i}(\zpj)| \\
    &\le\frac{2N_{\varepsilon' }+1}{\nk }\,\bigl|\bigl|f_{i}\bigr|\bigr|_{\infty,\T}+\Bigl (1-\dfrac{N_k-2N_{\varepsilon' }-1}{\nk } \Bigr)\,\bigl|\bigl|f_{i}\bigr|\bigr|_{\infty,\T} \\
    &\le \dfrac{4N_{\varepsilon' }+2}{\nk }\,\bigl |\bigl| f_{i} \bigr|  \bigr|_{\infty,\T}\le 6\,\dfrac{N_{\varepsilon' }}{\nk }\,\bigl |\bigl| f_{i} \bigr|  \bigr|_{\infty,\T}
\end{align*}
and
\begin{align*}
    \Bigl | \dfrac{1}{\nk -2N_{\varepsilon' }-1}\sum_{j=0}^{\nk -2N_{\varepsilon' }-2}f_{i}(\lpj)  
    &-\dfrac{1}{N_k}\sum_{j=0}^{\nk -1}f_{i}(\lpj)\Bigr| \le 6\,\dfrac{N_{\varepsilon' }}{\nk }\,\bigl |\bigl| f_{i} \bigr|  \bigr|_{\infty,\T}.
\end{align*}
Therefore
$$ \Bigl | \dfrac{1}{N_k}\sum_{j=0}^{\nk -1}f_{i}(\zpj)-\dfrac{1}{N_k}\sum_{j=0}^{\nk -1}f_{i}(\lpj)\Bigr|\le C\varepsilon '+ 12\,\dfrac{N_{\varepsilon' }}{\nk }\,\bigl |\bigl| f_{i} \bigr|  \bigr|_{\infty,\T}.
$$
Let now \(r_{k}\) be the unique integer with \(r_{k}N\le \nk <(r_{k}+1)N\). Then, proceeding in the same way as above, we obtain that
\begin{align*}
    \Bigl | \fr{k}\sum_{j=0}^{\nk -1}f_{i}(\zpj)-\dfrac{1}{r_{k}.N}\sum_{j=0}^{r_{k}.N -1}f_{i}(\zpj)\Bigr| &\le \fr{k}\sum_{j=r_{k}N}^{\nk -1}| f_{i}(\zpj)| +\Bigl |\fr{k}-\dfrac{1}{r_{k}.N} \Bigr|\sum_{j=0}^{r_{k}N-1}|f_{i}(\zpj)| \\
%     &\le\frac{N}{\nk }\,\bigl|\bigl|f_{i}\bigr|\bigr|_{\infty,\T}+\Bigl (1-\dfrac{r_{k}.N}{\nk } \Bigr)\,\bigl|\bigl|f_{i}\bigr|\bigr|_{\infty,\T} \\
    &\le 2\,\dfrac{N}{\nk }\,\bigl |\bigl| f_{i} \bigr|  \bigr|_{\infty,\T}.
\end{align*} 
 Since \(N\) is fixed and \(\fl{\nk }{+\infty}\), we can choose \(\nk >2N_{\varepsilon '}+2\) sufficiently large so that 
 $$\max(12\,\frac{N_{\varepsilon' }}{\nk }, 2\,\frac{N}{\nk })\,\bigl |\bigl| f_{i} \bigr|  \bigr|_{\infty,\T}<\varepsilon '
 $$
 for every \(i\in\{1\ldots l\}\). As
 \[
\dfrac{1}{r_{k}.N}\sum_{j=0}^{r_{k}N-1}f_{i}(\zpj)=\fr{}\sum_{j=0}^{N-1}f_{i}(\zpj),
\]
 we get that
 \[
\Bigl|\fr{}\sum_{j=0}^{N-1}f_{i}(\zpj)-\fr{k}\sum_{j=0}^{\nk -1}f_{i}(\lpj)\Bigr|\le (C+2)\varepsilon '.
\]
Taking \(\varepsilon '\) so small that \((C+2)\varepsilon '<\varepsilon \) yields the result we are looking for.
\end{proof}

\begin{remark}\label{Remark 2.11}
 The argument presented above actually holds in a much more general setting, and shows the following result. Let \((X,T)\) be a dynamical system with the periodic specification property. Given a strictly increasing sequence \((\nk )_{k\ge 1}\) of integers, let
 \[
\cro{T, (\nk )}=\bigl \{ x\in X\;;\;T^{\nk }x=x\quad \textrm{for some}\ k\ge 1\bigr\} 
\]
denote the set of periodic points for \(T\) having a period within the set \(\{\nk \;;\;k\ge 1\}\). Let 
\(
\dro{T,(\nk) }=\bigl \{ \mu _{x}\;;\;x\in\cro{T,(\nk )}\bigr\} 
\). Then \(\dro{T,(\nk) }\) is dense in the set \(\mathcal{P}_{T}(X)\) of \(T\)-invariant Borel probability measures on \(X\), endowed with the \(\we\)-topology. 

Other variations of the argument are possible, involving or not the periodic specification property. 
\end{remark}

\subsection{Proof of Theorem \ref{Theorem 2.10}}
Let \(\cnge\) be a sequence of integers satisfying assumption (H), and let \(t_{1},\dots,t_{r}\), \(h_{1},\dots,h_{d}\) and \(I\subseteq\N\) be given by (H). For any \(0<\gamma <1\), consider the set 
\begin{align*}
 \gro{p,\cn}^{\,\gamma }=\bigl\{\mu \in &\pp{p}{}\;;\;\forall\,j\in\{1,\ldots, r\}\;\; \muc(t_{j})\neq 0\quad \textrm{and}\\
 &\forall\,n_{0},\ \exists\,n\ge n_{0},\ \exists\,l\in\{1,\ldots, d\}\;;\;|\muc(h_{l}c_n)|>\gamma  \min_{1\le j\le r}|\muc(t_{j})|\bigr\}.
\end{align*}
The interest of introducing this somewhat strange-looking set is the following fact.
\par\smallskip

\begin{fact}\label{Fait en plus}
If \(\mu \) belongs to \(\gro{p,\cn}^{\,\gamma }\), then there exists \(l\in\{1\ldots d\}\) such that 
\[\limsup\limits_{n\to+\infty}|\muc(h_{l}c_n)|>0.\] In particular, \(T_{c_{n}}\mu \nflw\leb\) as \({n}\longrightarrow{+\infty.}\)
\end{fact}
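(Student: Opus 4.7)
The plan is straightforward given the definition of $\gro{p,\cn}^{\,\gamma }$: it amounts to a finite pigeonhole argument followed by the Fourier characterisation of weak-star convergence. First, the first clause in the definition of $\gro{p,\cn}^{\,\gamma }$ guarantees that each of the finitely many numbers $|\muc(t_j)|$, $1\le j\le r$, is strictly positive, so that $m:=\min_{1\le j\le r}|\muc(t_j)|>0$, and in particular $\gamma m>0$.

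Next, I would apply the second clause with $n_0=1,2,3,\dots$ successively to produce a strictly increasing sequence $(n_k)_{k\ge 1}$ of integers together with a sequence $(l_k)_{k\ge 1}$ in $\{1,\dots,d\}$ satisfying $|\muc(h_{l_k}c_{n_k})|>\gamma m$ for every $k\ge 1$. Since $\{1,\dots,d\}$ is finite, the pigeonhole principle yields some $l\in\{1,\dots,d\}$ for which the set $\{k\ge 1 : l_k=l\}$ is infinite. Restricting to this subset produces an infinite subsequence of indices $n$ along which $|\muc(h_l c_n)|>\gamma m$, whence
\[
\limsup_{n\to+\infty}|\muc(h_l c_n)|\ge \gamma m>0,
\]
which is the first conclusion.

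For the second assertion, I would invoke the elementary pushforward identity
\[
\widehat{T_{c_n}\mu}(a)=\int_\T z^{a c_n}\,d\mu(z)=\muc(a c_n),
\]
valid for every $a\in\Z$, together with the fact recalled in Section~\ref{Notation} that weak-star convergence in $\pp{}{}$ is equivalent to pointwise convergence of all Fourier coefficients. If $T_{c_n}\mu$ were to converge weak-star to $\leb$, then $\muc(h_l c_n)=\widehat{T_{c_n}\mu}(h_l)$ would tend to $\widehat{\leb}(h_l)=0$ (note that $h_l\neq 0$ by the standing assumption in (H)), contradicting the limsup bound just established. There is no real obstacle in this proof; the only point requiring a bit of care is the decoupling of the index $l$ from the running index $n$, and this is exactly what the pigeonhole step accomplishes.
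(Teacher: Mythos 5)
Your proof is correct and follows essentially the same route as the paper's: extract from the definition an infinite set of indices $n$ with a large coefficient, pigeonhole over the finitely many $l$'s to fix one index $l$ working infinitely often, and conclude via the Fourier characterisation of $w^*$-convergence using $h_l\neq 0$. The paper merely compresses the pigeonhole step into the phrase ``there exists $l$ such that \dots for infinitely many $n$'s,'' which you have spelled out explicitly.
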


\begin{proof}
Let \(\mu \in\gro{p,\cn}^{\,\gamma }\). There exists \(l\in\{1\ldots d\}\) such that $|\muc(h_{l}c_n)|>\gamma  \min_{1\le j\le r}|\muc(t_{j})|$ for infinitely many $n$'s, and so
\[\limsup\limits_{n\to+\infty}|\muc(h_{l}c_n)|\ge\gamma  \min_{1\le j\le r}|\muc(t_{j})|>0.\]
Hence $\widehat{T_{c_n}\mu}(h_l) {\xymatrix@C20pt{\ar[r]\save[]+<0.45cm,-0.1pt>*{
\scriptstyle/}\restore&}}0$ as \({n}\longrightarrow{+\infty}\), and as $h_l\neq 0$ this implies
that \(T_{c_{n}}\mu \nflw\leb\).
\end{proof}

\par\smallskip
We first prove:

\begin{lemma}\label{Lemma 2.3}
 For every \(0<\gamma <1\), the set \(\gro{p,\cn}^{\,\gamma }\) is a dense \(G_{\delta }\) subset of \((\pp{p}{}, \we)\).
\end{lemma}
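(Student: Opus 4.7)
The $G_\delta$ structure should be a formal consequence of the definition. Expanding the quantifiers,
\[
\gro{p,\cn}^{\,\gamma} = \bigcap_{j=1}^r \{\mu : \wh{\mu}(t_j) \neq 0\} \;\cap\; \bigcap_{n_0 \in \N} \bigcup_{n \geq n_0} \bigcup_{l=1}^d \bigl\{\mu : |\wh{\mu}(h_l c_n)| > \gamma \min_{1 \le j \le r} |\wh{\mu}(t_j)|\bigr\},
\]
a countable intersection of $\we$-open subsets of $\pp{p}{}$ (openness of each factor coming from continuity of the Fourier coefficients). So the real content lies in density.

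My plan for density is to exhibit $\gro{p,\cn}^{\,\gamma}$ as containing the intersection of a dense open set with a dense set. I would introduce
\[
V := \bigl\{ \mu \in \pp{p}{} : \wh{\mu}(t_j) \neq 0 \text{ for every } 1 \le j \le r \bigr\},
\]
which is open, and let $\dro{p,(\nk)}$ (with $(\nk)$ an enumeration of $I$) be the associated family of CO-measures, which is dense in $\pp{p}{}$ by Theorem~\ref{Theorem 2.1}. The key step is then to show the inclusion $V \cap \dro{p,(\nk)} \subseteq \gro{p,\cn}^{\,\gamma}$. Here the crucial observation is that when $\lambda^{\pnm} = 1$, the Fourier coefficient $\wh{\mu_\lambda}(a)$ depends only on the residue of $a$ modulo $\pnm$: indeed $a \equiv a' \pmod{\pnm}$ yields $a p^j \equiv a' p^j \pmod{\pnm}$ for every $j \ge 0$, hence $\lambda^{a p^j} = \lambda^{a' p^j}$. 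Hypothesis (H), applied to this particular $N \in I$, then supplies indices $i$ and $l$ with $h_l c_n \equiv t_i \pmod{\pnm}$ for infinitely many $n$, and for each such $n$
\[
|\wh{\mu_\lambda}(h_l c_n)| = |\wh{\mu_\lambda}(t_i)| \geq \min_{1 \le j \le r} |\wh{\mu_\lambda}(t_j)| > \gamma \min_{1 \le j \le r} |\wh{\mu_\lambda}(t_j)|,
\]
using $\mu_\lambda \in V$ and $\gamma < 1$. This verifies the existential clause of $\gro{p,\cn}^{\,\gamma}$ at every threshold $n_0$.

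It remains to show that $V$ itself is dense, which will force $V \cap \dro{p,(\nk)}$ to be dense (a dense open set intersected with a dense set is dense). For this I would perturb convexly by the $T_p$-fixed point $\delta_0$: the measure $\mu_\alpha := (1-\alpha)\mu + \alpha\,\delta_0$ is $T_p$-invariant since $T_p(0) = 0$, and $\wh{\mu_\alpha}(t_j) = (1-\alpha)\wh{\mu}(t_j) + \alpha$ can vanish for at most one value of $\alpha \in \C$ per index $j$, hence for at most $r$ real values. Any small $\alpha > 0$ outside this finite bad set produces $\mu_\alpha \in V$ with $\mu_\alpha \flw \mu$, yielding density. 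The conceptual hurdle of the whole argument is recognizing that the modulus $\pnm$ appearing in hypothesis (H) is engineered to match the natural period $\pnm$ of the Fourier coefficients of CO-measures attached to $(\pnm)$-th roots of unity; once this alignment is exploited, the rest is a routine Baire category assembly.
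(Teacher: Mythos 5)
Your proof is correct and follows essentially the same route as the paper's: both rest on Theorem~\ref{Theorem 2.1}, the observation that $\wh{\mu_\lambda}(a)$ depends only on $a$ modulo $\pnm$ when $\lambda^{\pnm}=1$ (the paper's Fact~\ref{Fact 3.3}), and a convex perturbation by the Dirac mass at the fixed point to handle the case where some $\wh{\mu}(t_j)$ vanishes. The only (harmless) organizational difference is that you establish density of the open set $V$ first and then invoke ``dense open $\cap$ dense is dense,'' whereas the paper perturbs the CO-measure directly and re-verifies that the perturbed measure still satisfies the exact equality $\wh{\mu_\rho}(h_l c_n)=\wh{\mu_\rho}(t_i)$.
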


\begin{proof}
 The set \(\gro{p,\cn}^{\,\gamma }\) is clearly \(G_{\delta }\) in \((\pp{p}{}, \we)\), so we only need to show that it is dense. 
 Order the infinite set \(I\subseteq\N\) as a strictly increasing sequence \((\nk )_{k\ge 1}\). 
 
 We have the following:
 
 \begin{fact}\label{Fact 3.3}
  Let \(\mu \) belong to \(\dro{p,(\nk )}\). There exist \(i\in\{1,\ldots,r\}\) and $l\in\{1,\ldots,d\}$ such that 
  \(\muc{(h_lc_n)}=\muc(t_{i})\) for infinitely many integers \(n\).
 \end{fact}
 
\begin{proof}[Proof of Fact \ref{Fact 3.3}]
There exists \(\lambda \in \cro{p,(\nk )}\), with \(\la=1\) for some \(k\ge 1\), such that 
\[
\mu =\mu _{\lambda }=\fr{k}\sum_{j=0}^{\nk -1}\de{\lambda ^{p^{j}}}.
\]
For every \(a\in\Z\), we have
\[
\muc(a)=\fr{k}\sum_{j=0}^{\nk -1}\lambda ^{a.p^{j}}.
\]
Since \(\nk \in I\), there exist \(i\in\{1,\dots,r\}\) and \(l\in\{1,\dots,d\}\) such that \(h_{l}c_{n}\equiv t_{i}\mod (p^{\nk} -1)\) for infinitely many \(n\)'s. Hence 
\[h_{l}c_{n}p^{j}\equiv t_{i}p^{j}\mod (p^{\nk} -1)\quad \textrm{for every } 0\le j<\nk . \] As 
\(\la=1\), it follows that \(\lambda ^{h_{l}c_{n}p^{j}}=\lambda ^{t_{i}p^{j}}\). This yields that \(\muc(h_{l}c_{n})=\muc(t_{i})\). 
\end{proof}

Let now \(\mathcal{V}\) be a non-empty open subset of \((\pp{p}{},\we)\). By Theorem \ref{Theorem 2.1}, there exists \(\mu \in\dro{p,(\nk )}\cap\mathcal{V}\). Let \(i\in\{1,\dots,r\}\) and \(l\in\{1,\dots,d\}\) be such that \(\muc(h_{l}c_{n})=\muc(t_{i})\) for every \(n\) belonging to a certain infinite subset \(D\) of \(\N\). Then 
\[
 \limsup_{n\to+\infty}|\muc(h_{l}c_{n})|\ge |\muc(t_{i})|\ge \min_{1\le j\le r}|\muc(t_{j})|,
\]
and if \(\muc(t_{j})\neq 0\) for every \(j\in\{1,\ldots,r\}\), then \(\mu \) belongs to \(\gro{p,\cn}^{\gamma }\). Hence \(\gro{p,\cn}^{\gamma }\cap\mathcal{V}\neq\emptyset\) in this case.
\par\smallskip
Suppose now that \(\min\limits_{1\le j\le r}|\muc(t_{j})|=0\), and write \(\{1,\dots,r\}=I\,\cup\,J\), where
\[
I=\bigl\{j\in\{1,\ldots, r\}\;;\;\muc(t_{j})= 0\bigr\}\quad \textrm{and}\quad J=\bigl\{j\in\{1,\ldots, r\}\;;\;\muc(t_{j})\neq 0\bigr\}.
\]
For any \(0<\rho <1\), consider the measure \(\mu _{\rho }=\rho \delta _{1}+(1-\rho )\mu \): it is \(T_{p}\)-invariant and belongs to \(\mathcal{V}\) if \(\rho \) is sufficiently small. For every \(j\in\{1,\ldots, r\}\), \(\muc_{\rho }(t_{j})=\rho +(1-\rho )\muc(t_{j})\), so that \(\muc_{\rho }(t_{j})=\rho >0\) for every \(j\in I\). If \(\rho \) is sufficiently small, \(|\muc_{\rho }(t_{j})|>0\) for every \(j\in J\) and thus \(\min\limits_{1\le j\le r}|\muc_{\rho }(t_{j})|>0\). Since
$
\muc_{\rho }(h_{l}c_{n})=\rho +(1-\rho )\muc(t_{i})=\muc_{\rho }(t_{i})$ for every $n\in D,
$
it follows that \(\muc_{\rho }\) belongs to \(\gro{p,\cn}^{\gamma }\cap\mathcal{V}\) in this case as well. 
Lemma \ref{Lemma 2.3} is proved.
\end{proof}

The last step in our proof of Theorem \ref{Theorem 2.10} is the following classical fact:
\begin{fact}\label{Fact 3.4}
 The set \(\pp{p,c}{}\) is a dense \(G_{\delta }\) subset of \((\pp{p}{},\we)\).
\end{fact}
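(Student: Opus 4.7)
The fact splits into two: that $\pp{p,c}{}$ is $G_\delta$ in $(\pp{p}{},\we)$, and that it is dense. I would handle them separately and expect density to be where the real work is.

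For the $G_\delta$ assertion, the plan is to write
\[\pp{p,c}{}=\bigcap_{n\ge 1}\bigl\{\mu\in\pp{p}{}\;;\;\sup_{x\in\T}\mu(\{x\})<1/n\bigr\}\]
and verify that each complementary set $F_n=\{\mu\in\pp{p}{}\;;\;\exists\,x,\ \mu(\{x\})\ge 1/n\}$ is $\we$-closed. Given $\mu_k\flw\mu$ with $\mu_k(\{x_k\})\ge 1/n$, extract a subsequence $x_k\to x$ by compactness of $\T$; for each $r>0$ the Portmanteau upper bound on closed balls gives $\mu(\bar B(x,r))\ge\limsup_k\mu_k(\bar B(x,r))\ge 1/n$, and letting $r\to 0$ yields $\mu(\{x\})\ge 1/n$.

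For density, the naive convex combination $(1-\varepsilon)\mu+\varepsilon\leb$ fails because atoms of $\mu$ are merely rescaled by $1-\varepsilon$ and persist. My plan is to use convolution on the compact group $\T$ instead. For each $\varepsilon\in(0,1)$, let $\beta_\varepsilon$ be the Bernoulli measure on $\T$ obtained by pushing forward the i.i.d.\ product measure on $\{0,\ldots,p-1\}^{\N}$ with marginal $(1-\varepsilon,\varepsilon/(p-1),\ldots,\varepsilon/(p-1))$ through the base-$p$ expansion. Then $\beta_\varepsilon$ is $T_p$-invariant (shift-invariance on the symbolic space transfers via the factor map, since $T_p\circ\pi=\pi\circ\sigma$), continuous (every single infinite sequence has product probability $0$), and satisfies $\beta_\varepsilon\flw\delta_0$ as $\varepsilon\to 0^+$ (since $\beta_\varepsilon([0,p^{-N}))\ge(1-\varepsilon)^N\to 1$ for every fixed $N$, so mass concentrates in arbitrarily small neighbourhoods of $0$). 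Set $\nu_\varepsilon=\mu*\beta_\varepsilon$: because $T_p$ is a group endomorphism of $\T$, one has $T_p\nu_\varepsilon=(T_p\mu)*(T_p\beta_\varepsilon)=\nu_\varepsilon$, so $\nu_\varepsilon\in\pp{p}{}$; the identity $\nu_\varepsilon(\{y\})=\sum_{a}\mu(\{a\})\,\beta_\varepsilon(\{y-a\})=0$ (summed over the countably many atoms $a$ of $\mu$) shows $\nu_\varepsilon$ is atomless; and $\we$-continuity of convolution in one factor gives $\nu_\varepsilon\flw\mu*\delta_0=\mu$.

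The only genuinely delicate step is the construction of the $\beta_\varepsilon$: one needs continuous $T_p$-invariant measures that concentrate weakly at the single point $0$, even though $\delta_0$ is itself the obvious invariant measure there. The Bernoulli trick resolves this in one stroke by biasing the $0$-digit heavily without ever giving any single infinite sequence positive mass, so the approximants remain continuous while collapsing weakly to $\delta_0$.
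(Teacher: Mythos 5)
Your proof is correct and follows essentially the same route as the paper: the $G_\delta$ part is the standard argument (which the paper cites, and itself carries out in the multidimensional analogue), and for density the paper likewise constructs $p$-Bernoulli measures $\mu_\Theta$ converging $w^*$ to the Dirac mass at the identity and then convolves them with an arbitrary $\mu\in\pp{p}{}$; your $\beta_\varepsilon$ are exactly these measures for the choice $\Theta=(1-\varepsilon,\varepsilon/(p-1),\dots,\varepsilon/(p-1))$.
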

\begin{proof}
 It is a known result that if \(X\) is a Polish space, the set \(\mathcal{P}_{c}(X)\) of continuous probability measures on \(X\) is a \(G_{\delta }\) subset of the set \(\mathcal{P}(X)\) of all Borel probability measures on \(X\), endowed with the \(\we\)-topology (see for instance \cite{DGS}*{Proposition 2.16} or \cite{GM}*{Fact 3.2}). So \(\pp{p,c}{}\) is \(G_{\delta }\) in \((\pp{p}{},\we)\). The density of \(\pp{p,c}{}\) in \(\pp{p}{}\) is proved in \cite{S1}, using the density of CO-measures in \(\pp{p}{}\). It can also be retrieved by using the following elementary observation: there exists a sequence \((\mu _{k})_{k\ge 1}\) of elements of \(\pp{p,c}{}\) such that \(\mu _{k}\flwco \delta_1\).
 \par\smallskip
 The measures \(\mu _{k}\) can be constructed  as Cantor-type measures, also called \(p\)-Bernoulli in \cite{L1}, \cite{L2}, or \cite{FS}: given a \(p\)-tuple
 \(\Theta =(\theta _{0},\theta _{1},\dots,\theta _{p-1})\) of elements of \((0,1)\) with \(\sum_{j=0}^{p-1}\theta _{i}=1\), let \(m_{\Theta }\) be the product measure
 \[
m_{\Theta }=\bigotimes_{n\ge 1}\,\,\Bigl (\, \sum_{j=0}^{p-1}\theta _{j}\de{j}\,\Bigr)\quad \textrm{on}\quad \{0,1,\dots,p-1\}^{\N}, 
\]
and let \(\mu _{\Theta }\in\pp{}{}\) be the image measure of \(m_{\Theta }\) by the map \(\fl{\Phi :\{0,1,\dots,p-1\}^{\N}}{\T}\) defined by 
\[
\Phi \bigl ( (\omega _{n})_{n\ge 1}\bigr)=\exp\Bigl (2i\pi \sum_{n\ge 1}\omega _{n}p^{-n} \Bigr).
\]
Each measure \(\mu _{\Theta }\) is easily seen to belong to \(\pp{p,c}{}\), and 
\({\mu _{\Theta }}\flwco{\delta _{1}}\) as \(\fl{\Theta }{(1,0,\dots,0).}\) 
\par\smallskip
Once we have obtained a sequence \((\mu _{k})_{k\ge 1}\) of elements of \(\mathcal{P}_{p,c}(\T)\) such that \(\mu _{k}\xrightarrow{\text{$w^{*}$}}\delta _{1}\), the density of \(\mathcal{P}_{p,c}(\T)\)in \(\mathcal{P}_{p}(\T)\) immediately follows, since for each measure \(\mu \in\mathcal{P}_{p}(\T)\), \((\mu _{k}*\mu )_{k\ge 1}\) is a sequence of \(T_{p}\)-invariant continuous measures which converges \(w^{*}\) to \(\mu\).
\end{proof}

\begin{proof}[Proof of Theorem \ref{Theorem 2.10}]
 It follows from Lemma \ref{Lemma 2.3} and Fact \ref{Fact 3.4} and from the Baire Category Theorem  that for any $\gamma\in(0,1)$, the set \(\gro{p,\cn}^{\gamma}\cap\pp{p,c}{}\) is residual in \((\pp{p}{},\we)\). By Fact \ref{Fait en plus}, the set 
\[
\bigl\{\mu \in\pp{p,c}{}\;;\;\exists l\in\{1,\ldots, d\}\;\;\limsup\limits_{n\to+\infty}|\muc(h_{l}c_n)|>0\bigr\}
\]
is residual in \((\pp{p}{},\we)\), and hence
\(\gro{p,\cn}{}\) is residual as well. Theorem \ref{Theorem 2.10} is proved.
\end{proof}

\section{Proofs of Theorems \ref{Theorem 1} and \ref{Theorem 2}, and further examples}\label{Section 3}
In this section, we apply Theorem \ref{Theorem 2.10} to various classes of sequences \(\cnge\), and show that generically in the Baire Category sense, a measure \(\mu \in\pp{p}{}\) has infinitely many ``large'' Fourier coefficients along the sequence \(\cnge\), or along some dilated sequence \((a.c_{n})_{n\ge 0}\) for some \(a\in\Z\setminus\{0\}\). We begin by proving Theorem \ref{Theorem 1}.

\subsection{Disproving Conjecture (C3): proof of Theorem \ref{Theorem 1}}
Let \(p\ge 2\). Given another integer \(q\ge 2\) (not necessarily multiplicatively independent from \(p\)), we consider the sequence \(c_{n}=q^{n}\), \(n\ge 0\). In order to show that the sets
\[
S_{p,q} := \bigl\{\mu \in\pp{p,c}{}\;;\;\limsup\limits_{n\to+\infty}|\muc(q^n)|>0\bigr\}
\]
and 
\[\gro{p,q}{}=\{\mu \in\pp{p,c}{}\;;\;T_{q^{n}}\mu \nflw\leb\}\] are dense in \((\mathcal{P}_{p,c}(\T),\we)\)  it suffices to show that this sequence \(\cnge\) satisfies assumption (H), and then to apply Theorem \ref{Theorem 2.10}. That assumption (H) is satisfied is a consequence of the following lemma, which relies on considerations from elementary number theory:

\begin{lemma}\label{Lemma 3.1}
 Let \(p,q\ge 2\). There exists an integer $N_0\ge 1$ such that for every \(a\ge 1\) sufficiently large, the following assertion holds:
 
 \par\smallskip
for every \(N\in I:= N_0.\N+1\),   there exists an integer \(r_{N}\ge 1\) such that
\[
q^{a+k.r_{N}}\equiv q^{a}\mod (\pnm )\quad \textrm{for every}\ k\ge 0.
\]
\end{lemma}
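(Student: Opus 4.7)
The plan is to prove Lemma~\ref{Lemma 3.1} by elementary number theory, combining the lifting-the-exponent (LTE) formula with the Chinese Remainder Theorem. The core observation is that one must control $\gcd(q^a, p^N - 1)$ uniformly in $N$, and this becomes possible once $N$ is restricted to a suitably chosen arithmetic progression.

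First, decompose $q = AB$ with every prime factor of $A$ dividing $p$ and $\gcd(B, p) = 1$. Since $\gcd(p^N - 1, p) = 1$, the factor $A$ is automatically coprime to $p^N - 1$, so the interaction of $q^a$ with $p^N - 1$ is governed entirely by $B^a$. Let $P$ denote the finite set of primes dividing $B$; for each $\ell \in P$ let $d_\ell = \mathrm{ord}_\ell(p) \ge 1$, and recall that $\ell \mid p^N - 1$ if and only if $d_\ell \mid N$. I would then define
\[
N_0 := 2 \prod_{\ell \in P} \ell \cdot d_\ell,
\]
where the factor $2$ ensures that every $N \equiv 1 \pmod{N_0}$ is odd. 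For such $N$ two complementary facts hold: if $\ell \in P$ satisfies $d_\ell \ge 2$, then $N \equiv 1 \pmod{d_\ell}$ forbids $d_\ell \mid N$, so $\ell \nmid p^N - 1$; if instead $d_\ell = 1$ (equivalently, $\ell \mid p - 1$), then $\ell \mid N_0$ forces $v_\ell(N) = 0$, and LTE gives $v_\ell(p^N - 1) = v_\ell(p - 1)$ (the case $\ell = 2$ being handled by the parity of $N$). Hence
\[
g := \prod_{\ell \in P,\ \ell \mid p - 1} \ell^{\, v_\ell(p - 1)}
\]
is a constant independent of $N$, equal to $\gcd(B^{\infty}, p^N - 1)$.

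Now take $a \ge \max_{\ell \mid g} v_\ell(p - 1)$, a bound independent of $N$; this ensures $g \mid q^a$. Writing $p^N - 1 = g \cdot m_N$, one has $\gcd(q, m_N) = 1$ from the previous step, so Euler's theorem supplies an integer $r_N \ge 1$ with $m_N \mid q^{r_N} - 1$ (the multiplicative order of $q$ modulo $m_N$ being the natural choice). Because $g \mid q^a$ and $\gcd(g, m_N) = 1$, the Chinese Remainder Theorem yields $p^N - 1 \mid q^a(q^{r_N} - 1)$, that is, $q^{a + r_N} \equiv q^a \pmod{p^N - 1}$; a trivial induction then produces $q^{a + k r_N} \equiv q^a \pmod{p^N - 1}$ for every $k \ge 0$.

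The main subtlety is ensuring that the $\ell$-adic valuations $v_\ell(p^N - 1)$ stay uniformly bounded (in $N$) for the primes $\ell \in P$ that divide $p - 1$: this is precisely why $N$ is restricted to the arithmetic progression $1 + N_0 \N$, and is where the LTE lemma, with its customary separate treatment of $\ell = 2$, is the right tool. Once this boundedness is secured, the rest of the argument reduces to a clean application of Euler's theorem and the Chinese Remainder Theorem.
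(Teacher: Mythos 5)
Your proof is correct and follows essentially the same strategy as the paper's: restrict $N$ to an arithmetic progression on which the part of $p^N-1$ supported on the primes of $q$ is uniformly controlled, absorb that part into $q^a$ for $a$ large enough (independently of $N$), and apply the Fermat--Euler theorem to the coprime cofactor. The only difference is in execution: you use the lifting-the-exponent lemma to make the $q$-part of $p^N-1$ literally constant (equal to your $g$) on the progression and apply Euler once to $q$ modulo $m_N$, whereas the paper merely bounds each valuation $v_{q_i}(p^N-1)$ by a fixed $\gamma$ (via the order of $p$ modulo $q_i^{\gamma}$) and applies Euler prime by prime; both versions yield the required $r_N$.
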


\begin{proof}
 Write \(q\) as \(q=q_{1}^{b_{1}}\!\!\dots q_{s}^{b_{s}}\), where \(q_{1},\dots,q_{s}\) are primes and \(b_{1},\dots,b_{s}\) are positive integers. Let \(a_{0}\ge 1\) be such that \(1<p<q_{i}^{a_{0}b_{i}}\) for every \(i\in\{1,\ldots,s\}\). A first step in the proof of Lemma \ref{Lemma 3.1} is to show the following
 
 \begin{fact}\label{Fact 3.2}
 Let $u\ge 1$ be a positive integer. Let $\gamma \ge 1$ be such that for every \(i\in\{1,\ldots,s\}\) and every $v\in\{1,\ldots,u\}$, $q_{i}^{\gamma}$ does not divide $p^v-1$.
  There exist integers \(N_{1}>u,\dots,N_{s}>u\) such that for every \(i\in\{1,\ldots,s\}\) and every \(N\in\N\setminus\bigcup_{j=1}^{s}N_{j}.\N\),
  \[
p^{N}\not\equiv 1\mod q_{i}^{\gamma}.
\]
 \end{fact}
 
\begin{proof}
Let \(i\in\{1,\ldots,s\}\). If \(p^{N}\not\equiv 1\mod q_{i}^{\gamma}\) for every \(N\ge 1\), then clearly \(p^{N}\not\equiv 1\mod q_{i}^{\gamma}\) for every \(N\in\N\setminus\bigcup_{j=1}^{s}N_{j}.\N\), whatever  the choice of the integers \(N_{1},\dots,N_{s}\). So we can suppose without loss of generality that there exists an integer \(N\ge 1\) such that \(p^{N}\equiv 1\mod q_{i}^{\gamma}\). Let \(N_{i}\) be the smallest such integer. Necessarily, \(N_{i}>u\), since else $q_{i}^{\gamma}$ would divide \(p^v-1\) for some $v\in\{1,\ldots,u\}$. Moreover any integer \(N\) such that \(p^{N}\equiv 1\mod q_{i}^{\gamma}\) is a multiple of \(N_{i}\). It follows that \(p^{N}\not\equiv 1\mod q_{i}^{\gamma}\) for every \(N\in\N\setminus N_{i}.\N\).
 \end{proof}
 
We apply Fact \ref{Fact 3.2} to $u=1$ and $\gamma=a_0.\max_{1\le i\le s}b_i$. Let  \(N_{1},\ldots,N_{s}\) be given by Fact \ref{Fact 3.2}.
Since \(N_{j}\ge 2\) for every \(j\in\{1,\ldots,s\}\), the set \(J:=\N\setminus \bigcup_{j=1}^{s}N_{j}.\N\) is infinite. Set $N_0=N_1\ldots N_l$. Then $I:=N_0.\N+1$ is contained in $J$.

Fix \(N\in I\). For each \(i\in\{1,\ldots,s\}\), let \(0\le a_{i,N}<\gamma\) be the largest integer such that \(q_{i}^{a_{i,N}}\) divides \(\pnm \), and write \(\pnm =q_{i}^{a_{i,N}}s_{i,N}\) for some integer \(s_{i,N}\ge 1\) with \(\textrm{gcd}(s_{i,N},q_{i})=1\). By the Fermat-Euler Theorem, there exists \(r_{i,N}\ge 1\) such that \(q_{i}^{r_{i,N}}\equiv 1\mod s_{i,N}\). 
Set \(r_{N}=r_{1,N}. r_{2,N}\dots r_{s,N}\). Then for every \(l\ge 1\) and every \(i\in\{1,\ldots,s\}\), \(q_{i}^{l.r_{N}}\equiv 1\mod s_{i,N}\), so that 
\(q_{i}^{a_{i,N}+l.r_{N}}\equiv q_{i}^{a_{i,N}}\mod (\pnm )\) for every \(i\in\{1,\ldots,s\}\). If $a$ is sufficiently large, we have \(a_{i,N}<\gamma<a.b_{i}\) for every \(i\in\{1,\ldots,s\}\), so that \(q_{i}^{a.b_{i}+l.r_{N}}\equiv q_{i}^{a.b_{i}}\mod (\pnm) \). Applying this to \(l=k.b_{i}\), \(k\ge 1\), yields that
\((q_{i}^{b_{i}})^{a+k.r_{N}}\equiv (q_{i}^{b_{i}})^{a}\mod (\pnm )\) for every \(i\in\{1,\ldots,s\}\), i.e. that \(q^{a+k.r_{N}}\equiv q^{a}\mod (\pnm )\). 
\end{proof}

\begin{proof}[Proof of Theorem \ref{Theorem 1}]
 By Lemma \ref{Lemma 3.1} above,  the sequence \((q^{n})_{n\ge 1}\) satisfies assumption (H). The proof of Theorem \ref{Theorem 2.10} combined with Lemma \ref{Lemma 3.1} shows the density of \[
S_{p,q} := \bigl\{\mu \in\pp{p,c}{}\;;\;\limsup\limits_{n\to+\infty}|\muc(q^n)|>0\bigr\}
\]
in \((\mathcal{P}_{p,c}(\T),\we)\).
\end{proof}

\begin{remark}
The proof of Theorem \ref{Theorem 1} does not make use of all the information provided by Lemma \ref{Lemma 3.1}: we apply it with $u=1$, the particular form of the set $I$ is not used, and we only need the fact that for every $N\in I$, there exist infinitely many $n$'s such that $q^{n}\equiv q^{a}\mod (\pnm )$. This additional information will be important, however, in the forthcoming proofs of Theorems \ref{Theorem 3.10} and \ref{Theorem 2}. 
\end{remark}

\subsection{A generalisation of Theorem \ref{Theorem 1}}\label{Section 3.2}
In this section, we consider  sequences \(\cnge\) of the following form: 
\(c_{n}=f_{1}(n)q_{1}^{n}+f_{2}(n)q_{2}^{n}+\cdots+f_{d}(n)q_{d}^{n}\), \(n\ge 0\), where for each \(l\in\{1,\ldots,d\}\), \(q_{l}\ge 2\) is an integer and \(f_{l}\) is a polynomial with 
coefficients in \(\Z\). This class of sequences is considered in \cite{M} and \cite{LMP}, where the following result is proved: if \(p\ge 2\) admits a prime factor \(p^{*}\) which does not divide one of the integers \(q_{i}\), \(1\le i\le d\), then any measure \(\mu \in\pp{p}{}\) which is ergodic and of positive entropy is \(\cn\)-generic. It follows that the set 
\[G'_{p,\cn}=
\bigl \{ \mu \in\pp{p,c}{}\;;\;T_{c_{n}}\mu \flw\leb\ \textrm{on a set of upper density}\ 1\bigr\} 
\]
is residual in \((\pp{p}{},\we)\) -- see Section \ref{Section 5.1} for details on this argument.
\par\smallskip
We complement this result by showing the following

\begin{theorem}\label{Theorem 3.10}
 If \(\cnge\) is a sequence of the form \(c_{n}=f_{1}(n)q_{1}^{n}+f_{2}(n)q_{2}^{n}+\cdots+f_{d}(n)q_{d}^{n}\), \(n\ge 0\), where for each \(l\in\{1,\ldots,d\}\), \(q_{l}\ge 2\) is an integer and \(f_{l}\) is a polynomial with 
coefficients in \(\Z\), then the set 
 \[G_{p,\cn}=
\bigl \{ \mu \in\pp{p,c}{}\;;\;T_{c_{n}}\mu \nflw\emph{\leb}\ \textrm{as}\ \fl{n}{+\infty\bigr\}}
\]
is residual in \((\pp{p}{},\we)\).
\end{theorem}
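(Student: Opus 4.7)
The plan is to derive Theorem~\ref{Theorem 3.10} directly from Theorem~\ref{Theorem 2.10} by verifying assumption~(H) for every sequence $(c_n)$ of the stated form. Concretely, I will take $r=d=1$ in (H), with a single multiplier $h_1=1$ and a single target $t_1=c_a$ for one suitably chosen large integer $a$, so that for every $N$ in an appropriate infinite set $I\subseteq\N$ there are infinitely many indices $n$ satisfying $c_n\equiv c_a\pmod{p^N-1}$.

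To construct $I$, I apply Lemma~\ref{Lemma 3.1} separately to each pair $(p,q_l)$, obtaining integers $N_0^{(l)}\ge 1$. Setting $N_0:=\operatorname{lcm}(N_0^{(1)},\dots,N_0^{(d)})$ and $I:=N_0\,\N+1$, every $N\in I$ lies in $N_0^{(l)}\,\N+1$ for each $l$, so the lemma applies uniformly to all the bases $q_l$. I then fix one integer $a$, chosen large enough both that Lemma~\ref{Lemma 3.1} applies to every $(p,q_l)$ simultaneously with this value of $a$, and that $c_a>0$ (the sequence $(c_n)$ may be assumed to tend to $+\infty$, since otherwise replacing $(c_n)$ by $(-c_n)$ does not affect the residuality statement).

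The verification of (H) then rests on combining two congruences. Fix $N\in I$ and set $r_N:=\operatorname{lcm}(r_{1,N},\dots,r_{d,N})$, where $r_{l,N}$ is the period supplied by Lemma~\ref{Lemma 3.1} for $(p,q_l)$. Then $q_l^{\,a+k\,r_N}\equiv q_l^{\,a}\pmod{p^N-1}$ for every $l$ and every $k\ge 0$. The second ingredient is the elementary observation that for any polynomial $f\in\Z[x]$ and any integer $M$, $M$ divides $f(x+M)-f(x)$; applied with $M=p^N-1$, this gives $f_l(a+k\,r_N)\equiv f_l(a)\pmod{p^N-1}$ whenever $k$ is itself a multiple of $p^N-1$. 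Combining both congruences, for $k=m(p^N-1)$ with $m\ge 0$ one obtains
\[
c_{\,a+m(p^N-1)\,r_N}\ \equiv\ \sum_{l=1}^{d} f_l(a)\,q_l^{a}\ =\ c_a\pmod{p^N-1},
\]
which furnishes the required infinite family of indices $n$. Thus (H) holds with $t_1=c_a$ and $h_1=1$, and Theorem~\ref{Theorem 2.10} yields the residuality of $\gro{p,\cn}$ in $(\pp{p}{},\we)$.

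The main obstacle I anticipate lies in reconciling the exponential periodicity of $(q_l^n)$ modulo $p^N-1$, which is only uniform on arithmetic progressions of step $r_N$, with the polynomial factors $f_l(n)$, which are not themselves periodic modulo $p^N-1$. The identity $f(x+M)\equiv f(x)\pmod{M}$ is precisely what resolves this tension: restricting to the sub-progression of step $(p^N-1)\,r_N$ kills the non-constant part of each $f_l$ modulo $p^N-1$ while preserving the periodicity of the exponential factors. Fixing the integer $a$ once and for all, independently of $N$, is what makes the target $t_1=c_a$ one and the same integer across all $N\in I$, as (H) requires.
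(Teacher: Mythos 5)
Your proof is correct and follows essentially the same route as the paper's: apply Lemma~\ref{Lemma 3.1} to each base $q_l$, intersect the resulting arithmetic progressions of exponents $N$, pass to the sub-progression of indices of step $(p^N-1)r_N$ so that the congruence $f_l(a+k'(p^N-1)r_N)\equiv f_l(a)\pmod{p^N-1}$ kills the polynomial factors, and conclude that $c_{a+k'(p^N-1)r_N}\equiv c_a\pmod{p^N-1}$, verifying (H) with $t_1=c_a$ and $h_1=1$. The only cosmetic differences are your use of least common multiples where the paper takes products, and your explicit remark ensuring $c_a>0$, a point the paper passes over silently.
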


\begin{proof}
 Let us show that \(\cnge\) satisfies assumption (H). By Lemma \ref{Lemma 3.1}, there exist integers \(a\ge 1\)  and \(N_{l}\), \(1\le l\le d\),  such that for every 
 $l\in\{1,\ldots,d\}$ and every \(N\in I_{l}:=N_l.\N+1\), there exists an integer \(r_{l,N}\ge 1\) such that for  every \(k\ge 0\),
 \[
q_{l}^{a+k.r_{l,N}}\equiv q_{l}^{a}\mod (\pnm ).
\]
The set \(I=\bigcap_{l=1}^{d}I_{l}\) is infinite. If we set, for each $N\in I$, \(r_{N}=r_{1,N}\dots r_{d,N}\), we get that for every \(N\in I\) and every \(k\ge 0\),
\begin{equation}\label{Equation 1}
 q_{l}^{a+k.r_{N}}\equiv q_{l}^{a}\mod (\pnm )\quad \textrm{for every}\ l\in\{1,\ldots,d\}.
\end{equation}
For each $l\in\{1,\ldots,d\}$, write the polynomial \(f_{l}\) as
\[
f_{l}(x)=\sum_{j=0}^{\Delta _{l}}b_{j}^{(l)}x^{j}, \quad \textrm{where}\ b_{j}^{(l)}\in\Z\ \textrm{for every}\ j\in\{0,\ldots, \Delta _{l}\}.
\]
For every $N\in I$ and every integer
\(k'\ge 0\), we have
\begin{align}
 f_{l}\bigl ( a+k'(\pnm )r_{N}\bigr)&=\sum_{j=0}^{\Delta_{l}}b_{j}^{(l)}\bigl (a+k'(\pnm )r_{N} \bigr)^{j}\notag 
 \intertext{and}
 \bigl (a+k'(\pnm )r_{N} \bigr)^{j} &\equiv a^{j}\mod (\pnm )\quad \textrm{for every}\ j\ge 0\notag.
 \intertext{Hence}
 f_{l}\bigl ( a+k'(\pnm )r_{N}\bigr)&\equiv f_{l}(a)\mod (\pnm )\quad \textrm{for every}\ l\in\{1,\ldots,d\}.\label{Equation 2}\\
 \intertext{Putting together (\ref{Equation 1}) and (\ref{Equation 2}) yields that for every \(N\in I\),}
 c_{a+k'(\pnm)r_{,N}}&\equiv c_{a}\mod (\pnm)\quad \textrm{for every}\ k'\ge 0\notag,
\end{align}
\par\medskip
\noindent which implies that assumption (H) is true. Theorem \ref{Theorem 3.10} thus follows from Theorem \ref{Theorem 2.10}.
 \end{proof}
\begin{remark}\label{Remark 3.11}
 We notice that if \(c_{n}=f(n)\) for some polynomial \(f\in\Z[X]\), then the set 
 \[
\bigl \{ \mu \in \pp{p,c}{}\;;\;T_{c_{n}}\mu \nflw\leb\bigr\} 
\]
is also residual in \((\pp{p}{},\we)\). Remark that in this case, the sequence \((c_{n}x)_{n\ge 0}\) is uniformly distributed mod 1 for every \(x\in\R\setminus\Q\), and hence
\begin{equation}\label{Equation 3}
% \fr{}\sum_{n=0}^{N-1}\exp(2i\pi hc_{n}x)\fl{}{0}\quad \textrm{as}\ \fl{N}{+\infty}\ \textrm{for every}\ h\in\Z\setminus\{0\}.
  \fr{}\sum_{n=0}^{N-1}\exp(2i\pi hc_{n}x) \to 0 \quad \textrm{as } \quad N \to \infty\ \quad \textrm{for every}\ h\in\Z\setminus\{0\}.
\end{equation}
If \(\mu \) belongs to \(\pp{p,c}{}\), integrating (\ref{Equation 3}) with respect to the measure $\mu$ yields that 
\begin{align*}
 %\fr{}\sum_{n=0}^{N-1}\muc(hc_{n})&\fl{}{0}\quad \textrm{for every}\ h\in\Z\setminus\{0\},
 %\quad \textrm{i.e.}\quad \fr{}\sum_{n=0}^{N-1}T_{c_{n}}\mu \flw\leb.
  \fr{}\sum_{n=0}^{N-1}\muc(hc_{n})& \to 0 \quad \textrm{for every}\ h\in\Z\setminus\{0\},
 \quad \textrm{i.e.}\quad \fr{}\sum_{n=0}^{N-1}T_{c_{n}}\mu \flw\leb.
\end{align*}
\end{remark}

\subsection{Further examples: proof of Theorem \ref{Theorem 2}}
Let \(\cnge\) be defined by a linear recursion: there exist \(L\ge 1\) and coefficients 
\(a_{1},\dots,a_{L}\) in \(\Z\) with \(a_{L}\neq 0\) such that
\[
c_{n}=a_{1}c_{n-1}+a_{2}c_{n-2}+\cdots+a_{L}c_{n-L}\quad \textrm{for every}\ n\ge L.
\]
Let \(p\ge 2\). Meiri introduces in \cite{M} the following two assumptions on the sequence \(\cnge\):
\begin{enumerate}
 \item [(a)] \(\cnge\) has no non-constant arithmetic subsequence;
 \item [(b)] \(a_{L}\) and \(p\) are relatively prime.
\end{enumerate}
It is observed in \cite{M}*{Prop. 5.1} that assumption (a) is satisfied as soon as the following property holds: 
\begin{enumerate}
 \item [(a')] if \(\lambda _{1},\dots,\lambda _{L'}\), with $1\le L'\le L$, are the distinct roots of the recursion polynomial \(p(x)=x^{L}-\sum_{l=1}^{L}a_{l}x^{L-l}\), then none of the quantities  \(\lambda _{i}\) and \(\lambda _{i}/\lambda _{j}\), \(1\le i<j\le L'\) is a root of unity.
\end{enumerate}
If assumptions (a) and (b) are satisfied, any measure \(\mu \in\pp{p}{}\) which is ergodic and of positive entropy is such that
\(T_{c_{n}}\mu \flw{}\leb\) as \(\fl{n}{+\infty}\) (this is a consequence of \cite{M}{Th. 5.2}). In particular, the set 
\[G'_{p,\cn}=
\bigl \{ \mu \in\pp{p,c}{}\;;\;T_{c_{n}}\mu \flw\leb\quad \textrm{along a set of upper density}\ 1\bigr\} 
\]
is residual in \((\pp{p}{},\we)\). See also \cite{LMP}*{Thm. 4.3}.
\par\medskip
Theorem \ref{Theorem 2} complements these results by showing that under the sole assumption (b), the set  
\[G_{p,\cn}=
\bigl \{ \mu \in\pp{p,c}{}\;;\;T_{c_{n}}\mu \nflw\leb\bigr\}
\]
is residual in \((\pp{p}{},\we)\).

\begin{proof}[Proof of Theorem \ref{Theorem 2}]
 Consider the matrix \(A\in M_{L}(\Z)\) given by
 \[
A=\begin{pmatrix}
a_{1}&a_{2}&\dots&\dots&a_{L}\\
1&0&\dots&\dots&0\\
0&1&\ddots&&0\\
\vdots&\ddots&\ddots&\ddots&\vdots\\
0&\dots&0&1&0
\end{pmatrix}.
\]
Setting \(C_{n}={}^{\textrm{T}}\!
\begin{pmatrix}
c_{n}&c_{n-1}&\dots&c_{n-L+1} 
\end{pmatrix}
\) for every \(n\ge L-1\), we have \(C_{n+1}=AC_{n}\) for every \(n\ge L-1\). Since \(\det A=(-1)^{L+1}a_{L}\) and \(a_{L}\neq 0\), \(A\) is invertible as a matrix of \(M_{L}(\Q)\), and 
\[
A^{-1}=\dfrac{(-1)^{L+1}}{a_{L}}\,\textrm{adj}\,(A)
\]
where \(\textrm{adj}\,(A)\), the adjoint (or adjugate) of \(A\), is the transpose of the matrix of the cofactors of \(A\). Observe that  \(\textrm{adj}\,(A)\in M_{L}(\Z)\).
\par\smallskip
Let us decompose the integer \(a_{L}\) as \(a_{L}=q_{1}^{b_{1}}\dots q_{s}^{b_{s}}\), where  \(q_{i}\) is prime and \(b_{i}\ge 1\) for every $i\in\{1,\ldots,s\}$. By Fact \ref{Fact 3.2}, there exist \(\gamma\ge 1\) and an infinite subset \(I\) of \(\N\) such that for every $N\in I$, \(p^{N}\not\equiv 1\mod q_{i}^{\gamma}\) for every $i\in\{1,\ldots,s\}$. Hence, for every \(N\in I\), $p^{N}-1$ can be written as \(p^{N}-1=q_{1}^{\beta _{1,N}}\!\!\!\dots\, q_{s}^{\beta _{s,N}}r_{N}\), where \(0\le \beta _{i,N}< \gamma\) and \(\textrm{gcd}(q_{i},r_{N})=1\) for each $i\in\{1,\ldots,s\}$. Since the prime factors of \(a_{L}\) are exactly the \(q_{i}\)'s, it follows that \(a_{L}\) and \(r_{N}\) are relatively prime, and hence that \(a_{L}\) is invertible modulo \(r_{N}\): there exists an integer \(d_{N}\) with \(0\le d_{N}<r_{N}\) such that \(a_{L}.d_{N}\equiv 1\mod r_N\).
Setting \(B_{N}=(-1)^{L+1}d_{N}\,\textrm{adj}(A)\), we observe that \(B_{N}\in M_{L}(\Z)\) and that \(AB_{N}\equiv B_{N}A\equiv I \mod r_{N}\). So \(A\) is invertible modulo \(r_{N}\), and its inverse is \(B_{N}\).
\par\smallskip
Consider now the set of matrices in \(M_{L}(\Z)\) consisting of all powers \(A^{n}\), \(n\ge 0\), of \(A\), taken modulo \(r_{N}\). This set being finite, there exist two integers  \(0\le n_{1,N}<n_{2,N}\) such that \(A^{n_{1,N}}\equiv A^{n_{2,N}}\mod r_{N}\). 
Setting \(n_{N}=n_{2,N}-n_{1,N}\), \(A^{n_{N}}\equiv I\mod r_{N}\), and thus the sequence \((C_{n})\) taken modulo \(r_{N}\) is periodic, with period \(n_{N}\). It follows that the sequence \(\cn\) itself taken modulo \(r_{N}\) is periodic of period \(n_{N}\), so that, in particular, \(c_{jn_{N}}\equiv c_{0}\mod r_{N}\) for every \(j\ge 0\). Setting 
\(h_{N}=q_{1}^{\beta _{1,N}}\!\!\!\dots\,q_{s}^{\beta _{s,N}}\) and remembering that \(\pnm=h_{N}.r_{N}\), we obtain that 
\(h_{N}.c_{jn_{N}}\equiv h_{N}c_0\mod (\pnm)\) for every \(j\ge 0\). Since \(0\le \beta _{i,N}<\gamma\) for every $i\in\{1,\ldots,s\}$, the set \(\{h_{N}\;;\;N\in I\}\) is finite and consists of non-zero integers. Assumption (H) is satisfied, and the proof is concluded as usual thanks to Theorem \ref{Theorem 2.10}.\end{proof}

\section{The multidimensional case: proof of Theorem \ref{Theorem 3}}\label{Section 4}
In this section, \(d\ge 2\) is an integer, and \(A,B\in M_{d}(\Z)\) are two \(d\times d\) matrices with integer coefficients such that $\det A\neq 0$ and $\det B\neq 0$. The matrix \(A\) is supposed to be similar in \(M_{d}(\C)\) to a diagonal matrix \(D\) whose diagonal coefficients \(\lambda _{1},\dots,\lambda _{d}\) are not of modulus \(1\). Let \(P\in GL_{d}(\C)\) be such that \(A=PDP^{-1}\). The matrix \(B\) is supposed to be invertible in \(M_{d}(\C)\), i.e. \(\det B\neq 0\). Theorem \ref{Theorem 3} states that the set 
\[
\gro{A,B}{}=\bigl \{ \mu \in\pp{A,c}{d}\;;\;T_{B^{n}}\mu \nflw\leb\bigr \}
\]
is residual in \((\pp{A}{d},\we)\). The proof of Theorem \ref{Theorem 3} follows the same structure as those of Theorems \ref{Theorem 2.10} and \ref{Theorem 1}, but certain technical difficulties that come with the multidimensional setting must be overcome.
\par\smallskip
We begin by proving an analogue of Theorem \ref{Theorem 2.1}.

%\subsection{Some dense classes of discrete measures in \(\pp{A}{d}\)}
\subsection{Some dense classes of discrete measures in the set of A-invariant measures}
Let \((\nk )_{k\ge 1}\) be a strictly increasing sequence of integers. Consider the set 
\[
\cc_{A,(\nk )}=\bigl \{\xx\in \T^{d}\;;\;A^{\nk }\xx=\xx\quad \textrm{for some}\ k\ge 1 \bigr \}
\]
which consists of periodic points for \(T_{A}\) having a period within the set 
\(\{\nk \;;\;k\ge 1\}\). For each \(\xx\in\cc_{A,(\nk )}\), let \(\mu _{\xx}\) be the measure defined by 
\[
\mu _{\xx}=\fr{k}\sum_{j=0}^{\nk -1}\de{A^{j}\xx}.
\]
It is a discrete \(T_{A}\)-invariant probability measure on \(\T^{d}\). Set 
\[
\dd_{A,(\nk )}=\bigl \{ \mu _{\xx}\;;\;\xx\in\cc_{A,(\nk )}\bigr \}.
\]
Taking inspiration from Theorem \ref{Theorem 2.1}, we would like to show that the set \(\dd_{A,(\nk )}\) is dense in \((\pp{A}{d},\we)\). If \((\T^{d},T_{A})\) has the periodic specification property, this is an immediate consequence of Remark \ref{Remark 2.11}. However, \(T_{A}\) is known to have the periodic specification property only in the case where \(A\) is an hyperbolic automorphism of \(\T^{d}\), i.e. \(\det A=\pm 1\) and \(A\) has no eigenvalue of modulus \(1\). Since \(A\) is not assumed here to be an automorphism of \(\T^{d}\), we need to take a different route. It will lead to the following weaker result, which is fortunately sufficient for our purposes:

\begin{theorem}\label{Theorem 4.10} 
 The convex hull of the set \(\dd_{A,(\nk )}\) is dense in \((\pp{A}{d},\we)\).
\end{theorem}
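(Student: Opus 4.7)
The plan is to combine ergodic decomposition with a hyperbolic shadowing argument. Since the extreme points of the compact convex set $\pp{A}{d}$ are the ergodic $T_{A}$-invariant measures, an elementary Riemann-sum approximation of the ergodic decomposition shows that the convex hull of the ergodic measures is already weak-\(*\) dense in $\pp{A}{d}$. It thus suffices to prove the sharper statement that every ergodic $\mu \in \pp{A}{d}$ lies in the weak-\(*\) closure of $\dd_{A,(\nk)}$ itself.

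Fix such a $\mu$. By Birkhoff's theorem applied to a countable dense family in $C(\T^{d})$, there exists $\xx_{0}\in\T^{d}$ such that $\frac{1}{\nk}\sum_{j=0}^{\nk -1}\delta_{A^{j}\xx_{0}}\flw\mu$ as $k\to+\infty$. The problem then reduces to producing, for each large $k$, a point $\yy_{k}\in\cc_{A,(\nk)}$ whose $T_{A}$-orbit of length $\nk$ uniformly shadows that of $\xx_{0}$; the continuity of test functions in $C(\T^{d})$ would then yield $\mu_{\yy_{k}}\flw\mu$. Here assumption~(a) enters: writing $A=PDP^{-1}$ with $D=\textrm{diag}(\lambda_{1},\ldots,\lambda_{d})$ and $|\lambda_{j}|\neq 1$, the matrix $A^{\nk}-I$ is invertible on $\R^{d}$, so to each $\mathbf{v}\in\Z^{d}$ corresponds a unique $\yy\in\T^{d}$ with $(A^{\nk}-I)\yy\equiv \mathbf{v}\bmod\Z^{d}$, providing a candidate element of $\cc_{A,(\nk)}$. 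Choosing $\mathbf{v}_{k}$ so that $\yy_{k}$ is as close as possible to $\xx_{0}$ and decomposing $P^{-1}(\yy_{k}-\xx_{0})$ along the stable and unstable eigendirections of $D$, one controls $\|A^{j}(\yy_{k}-\xx_{0})\|$ uniformly on $0\le j<\nk$: the stable coordinates contract under iteration, while the unstable coordinates can be made small initially thanks to the large norm of the unstable part of $A^{\nk}-I$.

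The main obstacle is precisely this uniform shadowing estimate. Because $|\det A|$ may exceed $1$, $T_{A}$ is in general a non-invertible endomorphism of $\T^{d}$, so the classical Anosov closing lemma for hyperbolic automorphisms does not apply directly; the stable/unstable splitting must be handled through the possibly complex conjugacy $P$, and the conjugation constants tracked carefully so that the shadowing estimates remain uniform in $k$. This is also the reason the theorem asserts only density of the \emph{convex hull} rather than of $\dd_{A,(\nk)}$ itself: the convex hull arises naturally from the reduction to ergodic measures via ergodic decomposition, while a direct density statement would seem to require a full specification-type property that is not available here.
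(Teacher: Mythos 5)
Your overall architecture (reduce to ergodic measures via the ergodic decomposition, take a Birkhoff-generic point $\xx_0$, and shadow its orbit segment of length $\nk$ by a periodic point obtained by inverting $A^{\nk}-I$) is sound, but the one step you explicitly defer --- the shadowing estimate --- is the entire crux, and in the \emph{uniform} form you state it, it is false. Take $\llk\in\Z^d$ nearest to $(A^{\nk}-I)\xx_0$ in $\R^d$ and set $\zzk=(A^{\nk}-I)^{-1}\llk$, $\xxk=\zzk\bmod 1$. In eigencoordinates one gets $|(P^{-1}(\xx_0-\zzk))_i|\le \|P^{-1}\|_{\infty}/|\lambda_i^{\nk}-1|$, hence $|f(A^{j}\xx_0)-f(A^{j}\xxk)|\lesssim \sum_{i}|\lambda_i|^{j}/|\lambda_i^{\nk}-1|$ for Lipschitz $f$. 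For a stable eigenvalue ($|\lambda_i|<1$) this bound is of order $1$ at $j=0$ (the stable block of $(A^{\nk}-I)^{-1}$ does \emph{not} have small norm), and for an unstable one it is of order $1/|\lambda_i|$ at $j=\nk-1$ (the initially small unstable error is expanded back to size $O(1)$ by time $\nk$). So no choice of $\mathbf{v}_k$ makes $\|A^{j}(\yy_k-\xx_0)\|$ uniformly small over $0\le j<\nk$, and the "classing lemma" you hope for is not just technically awkward but unavailable. What rescues the argument --- and what you must prove instead --- is that the pointwise error is summable in $j$ with a bound independent of $k$: summing the geometric series gives $\frac1{\nk}\sum_{j=0}^{\nk-1}|f(A^{j}\xx_0)-f(A^{j}\xxk)|\le C''/\nk$, i.e.\ the \emph{Birkhoff averages} along the two segments agree up to $O(1/\nk)$, which (equivalently: all but $O_{\varepsilon}(1)$ of the times $j$ are $\varepsilon$-shadowed) suffices to conclude $\mu_{\xxk}\flw\mu$. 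Without this replacement your plan does not close. A minor further slip: the point $\yy$ with $(A^{\nk}-I)\yy\equiv\mathbf{v}\bmod\Z^d$ is not unique in $\td$ (there are $|\det(A^{\nk}-I)|$ of them); only existence is needed.

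For comparison, the paper runs the same periodic-point construction and the same averaged estimate, but embeds it in a Hahn--Banach contradiction rather than a direct approximation: if the closed convex hull of $\dd_{A,(\nk)}$ missed some $\mu_0$, a separating Lipschitz functional $f$ would force $\limsup_k\frac1{\nk}\sum_{j<\nk}f(A^{j}\xx)\le\gamma_1<\int f\,d\mu_0$ for \emph{every} $\xx\in\td$, contradicting Birkhoff's theorem for an ergodic component $\nu_0$ of $\mu_0$ with $\int f\,d\nu_0$ close to $\int f\,d\mu_0$. Your route, once the estimate is corrected as above, is legitimate and in fact yields slightly more (every ergodic measure lies in the $\we$-closure of $\dd_{A,(\nk)}$ itself, not merely of its convex hull); the convex hull in the statement is an artifact of the separation argument, not of a missing specification property.
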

\begin{proof}
 Denote by \(\ff_{A,(\nk )}\) the \(\we\)-closure in \(\pp{A}{d}\) of the convex hull of  
 \(\dd_{A,(\nk )}\). This is a \(\we\)-closed convex subset of \(\pp{A}{d}\), and also of the Banach space \(\mathcal{M}(\td )\) of complex measures on \(\td \), endowed with the norm \(||\mu ||:=|\mu |(\td )\). This space \(\mathcal{M}(\td )\) is the dual space of \((C(\td ), ||\,.\,||_{\infty,\td})\), the space of continuous functions on \(\td\).
 \par\smallskip
 Our aim is to show that \(\ff_{A,(\nk)}=\pp{A}{d}\). Suppose that it is not the case, and that there exists a measure \(\mu _{0}\) belonging to \(\pp{A}{d}\setminus\ff_{A,(\nk)}\).
 \par\smallskip
 Applying the Hahn-Banach Theorem in the locally convex space \((\mathcal{M}(\T^d),w^{*})\), we obtain that there exists a \(w^{*}\)-continuous linear functional \(L:\mathcal{M}(\T^d)\longrightarrow \C\), as well as real numbers \(\gamma_{1} <\gamma _{2}\) such that 
\[
\Re e(L(\mu ))\le \gamma _{1}<\gamma _{2}\le\Re e(L(\mu _{0}))
\]
 for every \(\mu \in \ff_{A,(\nk )}\). Since any \(w^{*}\)-continuous functional on \(\mathcal{M}(\T^d)=C(\T^d)^{*}\) acts as integration against an element of \(C(\T^d)\), there exists a function \(f\in C(\T^d)\) such that 
 \[
\Re e\int_{\T}f\,d\mu \le \gamma _{1}<\gamma _{2}\le\Re e\int_{\T^d}f\,d\mu _{0}
\]
for every \(\mu \in \ff_{A,(\nk )}\). The measures \(\mu \) and \(\mu _{0}\) being nonnegative, replacing \(f\) by its real part we  can assume that \(f\) is real-valued, and thus that
\begin{equation}\label{Equation 1bis}
 \int_{\T}f\,d\mu \le \gamma _{1}<\gamma _{2}\le\int_{\T^d}f\,d\mu _{0}\quad \textrm{for every}\ \mu \in \ff_{A,(\nk )}.
\end{equation}
Moreover, it is possible to assume that \(f\) is a Lipschitz map on \(\T^d=\R^d/\Z^d\), endowed with the distance induced by the sup norm $||\, .\,||_{\infty,\R^d}$ on $\R^d$.
We thus suppose that there exists a constant \(C>0\) such that 
\begin{equation}\label{Lipschitz}
 |f(\xx_{1})-f(\xx_{2})|\le C\,\inf\left\{||\xx_{1}-\xx_{2}-\pmb{l}||_{\infty,\R^d}\;;\; \pmb{l} \in\Z^d\right\} \quad\textrm{for every }\xx_{1},\,\xx_{2}\in\T^d.
\end{equation}

\par\smallskip
For any integer \(k\ge 1\) and any element \(\xx_{k}\) of \(\T^d\) such that \((A^{{N_{k}}}-I)\xx_k=0\), the measure
\(\mu _{\xx_k}=\frac{1}{N_{k}}\sum_{j=0}^{N_{k}-1}\delta _{\{A^{{j}}\xx_k\}}\) belongs to \(\ff_{A,(\nk )}\). Applying (\ref{Equation 1bis}) to this measure yields that
\begin{equation}\label{Equation 2bis}
 \dfrac{1}{N_{k}}\sum_{j=0}^{N_{k}-1}f(A^{{j}}\xx_k)\le\gamma _{1}<\gamma _{2}\le\int_{\T^d}f\,d\mu _{0}.
\end{equation}
Let now \(\xx\) be an arbitrary element of \(\td\), and let \(k\ge 1\). Consider the vector \(\yy_k=(A^{\nk}-I)\,\xx\), seen as an element of \(\R^{d}\) (and not as an element of \(\td\)). There exists 
 \(\llk \in\Z^{d}\) such that \( ||\yy_{k}-\llk ||_{\infty,\R^d}\le 1\).
Recalling that \(A=PDP^{-1}\), with \(D=\textrm{diag}(\lambda _{1},\dots,\lambda _{d})\), we thus have 
\begin{align}
 \bigl|\bigl|\,P(D^{\nk}-I)P^{-1}\xx-\llk \,\bigr|\bigr|_{\infty,\R^d}&\le 1,\notag
 \intertext{so that}
 \bigl|\bigl|\,(D^{\nk}-I)P^{-1}\xx-P^{-1}\llk \,\bigr|\bigr|_{\infty,\C^d}&\le ||P^{-1}||_{\infty}\label{(12)}
\end{align}
where \(||P^{-1}||_{\infty}\) is the norm of the matrix \(P^{-1}\) seen as an endomorphism of \((\C^{d},||\,.\,||_{\infty,\C^d})\).
The inequality (\ref{(12)}) means exactly that
\begin{equation}\label{(13)}
 \sup_{1\le i\le d} \bigl | \,(\lambda _{i}^{\nk}-1)\,(P^{-1}\xx)_{i}-(P^{-1}\llk )_{i}\bigr|\le||P^{-1}||_{\infty}.
\end{equation}
Since no eigenvalue of \(A\) belongs to the unit circle, $A^{\nk}-I$ is invertible in $M_d(\R)$ and it is legitimate to set \(\zzk =(A^{\nk}-I)^{-1}\llk \in\R^d\). Let $\xxk$ be
the corresponding element of \(\td\), obtained by taking mod $1$ all the coordinates of $\zzk$. 
Then \(\xxk \) belongs to \(\cc_{A,(\nk)}\), with \((A^{\nk}-I)\xxk =0\) in \(\td\). Also,
\(\zzk=P(D^{\nk}-I)^{-1}P^{-1}\,\llk \), so that
\[
P^{-1}\zzk=\Bigl (\dfrac{1}{\lambda _{i}^{\nk}-1}\cdot\bigl (P^{-1}\llk \bigr)_{i}  \Bigr)_{1\le i\le d}.
\]
It follows from (\ref{(13)}) that for every \(i\in\{1\ldots,d\}\),
\begin{equation}\label{(14)}
 \bigl | (P^{-1}\xx)_{i}-(P^{-1}\zzk)_{i}\bigr|\le\dfrac{||P^{-1}||_{\infty}}{|\lambda _{i}^{\nk}-1|}\cdot
\end{equation}
By (\ref{Lipschitz}),
\begin{align*}
 \bigl|\,f(A^{j}\xx)-f(A^{j}\xxk)\,\bigr|&\le
 C\,\inf\left\{||A^{j}\xx-A^{j}\xxk- \pmb{l}||_{\infty,\R^d}\;;\; \pmb{l} \in\Z^d\right\}\\
 &\le
 C\, \bigl| \bigl| A^{j}\xx-A^{j}\zzk \bigr|  \bigr|_{\infty,\R^d} \\
 &\le C \,||P||_{\infty}\,\bigl| \bigl| \,D^{j}(P^{-1}\xx-P^{-1}\zzk)\, \bigr|  \bigr|_{\infty,\C^d}\\
 &=C \,||P||_{\infty}\,\sup_{1\le i\le d}\,|\lambda _{i}^{j}|\,.\,\bigl |(P^{-1}\xx-P^{-1}\zzk)_{i} \bigr| \\
 &\le C \,||P||_{\infty}\,\sum_{i=1}^{d}\,|\lambda _{i}^{j}|\,.\,\bigl |(P^{-1}\xx-P^{-1}\zzk)_{i} \bigr|.
\end{align*}
Plugging into (\ref{(14)}) yields that
\[
\bigl|\,f(A^{j}\xx)-f(A^{j}\xxk)\,\bigr|\le C \,||P||_{\infty}\,.\,||P^{-1}||_{\infty}\,
\sum_{i=1}^{d}\dfrac{|\lambda _{i}|^{j}}{|\lambda _{i}^{\nk}-1|}\cdot
\]
Hence 
\begin{align*}
 \Bigl |\fr{k}\sum_{j=0}^{\nk-1}f(A^{j}\xx)-\fr{k}\sum_{j=0}^{\nk-1}f(A^{j}\xxk)\Bigr|
 &\le C \,||P||_{\infty}\,||P^{-1}||_{\infty}\,\fr{k}\sum_{i=1}^{d}\dfrac{1}{|\lambda _{i}^{\nk}-1|}\sum_{j=0}^{\nk-1}|\lambda _{i}|^{j}\\
 &\le C \,||P||_{\infty}\,||P^{-1}||_{\infty}\,\fr{k}\sum_{i=1}^{d} \dfrac{|\lambda _{i}|^{\nk}-1}{|\lambda _{i}^{\nk}-1|\,(|\lambda _{i}|-1)}\cdot
\end{align*}
Notice that \(|\lambda _{i}|\neq 1\) for all \(i=1\ldots d\). Observe that 
$(|\lambda _{i}|^{\nk}-1)/|\lambda _{i}^{\nk}-1|\longrightarrow 1$ as ${k}\longrightarrow{+\infty}$ if $|\lambda _{i}|>1$, while
  $({|\lambda _{i}|^{\nk}-1)/|\lambda _{i}^{\nk}-1|\longrightarrow-1}$ as ${k}\longrightarrow{+\infty}$ if $|\lambda _{i}|<1$.
We obtain the existence of a positive constant \(C'\) such that 
\[
\sup_{k\ge 1}\,\,\sum_{i=1}^{d} \dfrac{|\lambda _{i}|^{\nk}-1}{|\lambda _{i}^{\nk}-1|\,(|\lambda _{i}|-1)}\le C'.
\]
Thus there exists \(C''>0\) such that 
\begin{equation}\label{(15)}
\Bigl |\fr{k}\sum_{j=0}^{\nk-1}f(A^{j}\xx)-\fr{k}\sum_{j=0}^{\nk-1}f(A^{j}\xxk)\Bigr|\le
\dfrac{C''}{N_k}\quad \textrm{for every}\ k\ge 1.
\end{equation}
The right hand bound in (\ref{(15)}) tends to \(0\) as \(k\) tends to infinity. Combining this with the fact that inequalities (\ref{Equation 2bis}) and (\ref{(15)}) hold true for every \(k\ge 1\), we obtain that
\begin{equation}\label{Equation 5}
 \limsup_{k\to+\infty}\dfrac{1}{N_{k}}\sum_{j=0}^{N_{k}-1}f(A^{j}\xx )\le \gamma _{1}<\gamma _{2}\le\int_{\T^d}f\,d\mu _{0}\quad \textrm{for every } \xx \in\T^d.
\end{equation}
Let \(\varepsilon >0\) be such that \(\gamma _{1}<\gamma _{2}-\varepsilon \). Applying the Ergodic Decomposition Theorem to the measure \(\mu _{0}\) yields the existence of an ergodic \(T_{A}\)-invariant measure \(\nu _{0}\) on \(\T^d\) such that 
\[
\int_{\T^d} f\,d\nu _{0}\ge\int_{\T^d}f\,d\mu _{0}-\varepsilon .
\]
It then follows from (\ref{Equation 5}) that
\begin{equation}\label{Equation 6}
 \limsup_{k\to+\infty}\dfrac{1}{N_{k}}\sum_{j=0}^{N_{k}-1}f(A^{j}\xx)\le \gamma _{1}<\gamma _{2}-\varepsilon \le\int_{\T^d}f\,d\nu _{0}
\end{equation}
for every \(\xx \in\T^d\).
\par\medskip
But since the measure \(\nu _{0}\) is ergodic, the Birkhoff Pointwise Ergodic Theorem implies that 
\[
\limsup_{k\to+\infty}\dfrac{1}{N_{k}}\sum_{j=0}^{N_{k}-1}f(A^{j}\xx )=\int_{\T^d}f\,d\nu _{0}
\]
for \(\nu _{0}\)-almost every \(\xx\in\T^d\), which contradicts (\ref{Equation 6}). So the initial assumption that the set \(\pp{A}{d}\setminus \ff_{A,(\nk )}\) is non-empty cannot hold, and Theorem \ref{Theorem 4.10} is proved
\end{proof}

\subsection{Proof of Theorem \ref{Theorem 3}} 
The proof of Theorem \ref{Theorem 3} is similar in spirit to that of Theorem \ref{Theorem 1}. Of course, assumption (H) and Theorem \ref{Theorem 2.10} are not available anymore, and they have to be replaced by the following analogue of Fact \ref{Fact 3.2}:
\begin{lemma}\label{Lemma 4.11}
 Let \(A\in M_{d}(\Z)\) with \(\det A\neq 0\) and \(\det(A-I)\neq 0\), and let \(p_{1},\dots,p_{s}\ge 2\) be prime numbers such that \(\emph{gcd\,}(p_{i},\det A)=1\) for every $i\in\{1,\ldots,s\}$. There exist an infinite subset \(I\) of \(\N\) and an integer \(\gamma \ge 1\) such that for every $i\in\{1,\ldots,s\}$ and every \(N\in I\), 
 \[
\det\,\bigl (A^{N}-I \bigr)\not\equiv 0\mod p_{i}^{\gamma }. 
\]
\end{lemma}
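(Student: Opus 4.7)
The plan is to factor $\det(A^N-I)=\prod_{j=1}^d(\lambda_j^N-1)$, where $\lambda_1,\dots,\lambda_d\in\bar{\Q}$ are the eigenvalues of $A$ with multiplicity, and to bound each factor's $p_i$-adic valuation uniformly for $N$ in a suitable infinite set $I$. Since $\det A=\pm\prod_j\lambda_j$ is coprime to every $p_i$ and the $\lambda_j$'s are algebraic integers, under any fixed embedding $\bar{\Q}\hookrightarrow\bar{\Q}_{p_i}$ each $\lambda_j$ is a $p_i$-adic unit, and its reduction in $\bar{\mathbb{F}}_{p_i}^{\,*}$ has some finite order $o_{ij}$, automatically coprime to $p_i$. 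Denote by $\mathcal{J}$ the set of indices $j$ for which $\lambda_j$ is a root of unity in $\bar{\Q}$, of order $r_j$; the assumption $\det(A-I)\neq 0$ excludes $\lambda_j=1$, so $r_j\geq 2$ for $j\in\mathcal{J}$. I would then set $M:=\prod_{j\in\mathcal{J}}r_j$ (with the convention $M:=1$ if $\mathcal{J}=\emptyset$) and take
\[
I:=\{N\in\N\,:\,\gcd(N,M\,p_1\cdots p_s)=1\},
\]
which is infinite since it is a union of non-trivial arithmetic progressions modulo $M\,p_1\cdots p_s$.

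The key step is to show that, uniformly in $N\in I$, $v_{p_i}(\lambda_j^N-1)$ is bounded by some constant $F_{ij}$. For $j\in\mathcal{J}$, the divisibility $r_j\mid M$ gives $\gcd(N,r_j)=1$, hence $\lambda_j^N$ is again a primitive $r_j$-th root of unity and is distinct from $1$; therefore $v_{p_i}(\lambda_j^N-1)$ is bounded by the maximum of $v_{p_i}(\zeta-1)$ over primitive $r_j$-th roots of unity $\zeta$, a finite positive rational. For $j\notin\mathcal{J}$, $\lambda_j^{o_{ij}}\neq 1$ (otherwise $\lambda_j$ would be a root of unity of order dividing $o_{ij}$), so $E_{ij}:=v_{p_i}(\lambda_j^{o_{ij}}-1)$ is a strictly positive rational. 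To control $v_{p_i}(\lambda_j^N-1)$ in this case, I would apply the binomial expansion: writing $\lambda_j^{o_{ij}}=1+\pi$ with $v_{p_i}(\pi)=E_{ij}$, for any $N'$ coprime to $p_i$ the identity $(1+\pi)^{N'}-1=N'\pi+\sum_{k\geq 2}\binom{N'}{k}\pi^k$ has first term of valuation exactly $E_{ij}$, while all higher terms have valuation at least $2E_{ij}>E_{ij}$, so the ultrametric inequality forces $v_{p_i}((1+\pi)^{N'}-1)=E_{ij}$. Splitting according to whether $o_{ij}\mid N$ (writing $N=o_{ij}N'$ and noting that $\gcd(N,p_i)=1$ combined with $\gcd(o_{ij},p_i)=1$ yields $\gcd(N',p_i)=1$) or $o_{ij}\nmid N$ (whence the reduction of $\lambda_j^N$ differs from $1$ and $v_{p_i}(\lambda_j^N-1)=0$) gives $v_{p_i}(\lambda_j^N-1)\leq E_{ij}$ in either subcase.

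Summing these bounds over $j$ produces $v_{p_i}(\det(A^N-I))\leq D_i:=\sum_j F_{ij}$ uniformly in $N\in I$. Since $\det(A^N-I)$ is a rational integer, its $p_i$-adic valuation is a non-negative integer, so setting $\gamma:=1+\max_{1\leq i\leq s}\lfloor D_i\rfloor$ ensures $p_i^\gamma\nmid\det(A^N-I)$ for every $N\in I$ and every $i$, which is the required conclusion.

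The main obstacle is that the clean binomial estimate degenerates precisely when $\lambda_j$ is a root of unity of order coprime to $p_i$: in that situation $\lambda_j^{o_{ij}}=1$ exactly, so $E_{ij}=+\infty$ and a uniform lifting-the-exponent-style argument fails. This is why the root-of-unity eigenvalues of $A$ must be isolated and controlled separately, via the coprimality condition $\gcd(N,r_j)=1$ built into $M$; this uses essentially that $A$ has only finitely many eigenvalues, so that only finitely many orders $r_j$ need to be avoided.
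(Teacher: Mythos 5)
Your proof is correct, but it takes a genuinely different route from the paper's. The paper argues entirely inside the finite ring \(M_{d}(\Z/p_{i}^{\gamma }\Z)\): since \(\det(A-I)\neq 0\) one can fix \(\gamma \) with \(p_{i}^{\gamma }\nmid\det(A-I)\) for every \(i\); since \(\gcd(p_{i},\det A)=1\), the matrix \(A\) is invertible modulo \(p_{i}^{\gamma }\) and hence has a finite multiplicative order \(n_{i}\) there; taking \(n_{0}=n_{1}\cdots n_{s}\) and \(I=n_{0}.\N+1\) gives \(A^{N}\equiv A\) and therefore \(\det(A^{N}-I)\equiv\det(A-I)\not\equiv 0\bmod p_{i}^{\gamma }\) for all \(N\in I\) and all \(i\) simultaneously. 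You instead diagonalise the problem over \(\overline{\Q}_{p_{i}}\) and bound \(v_{p_{i}}(\lambda _{j}^{N}-1)\) factor by factor, with a lifting-the-exponent style binomial estimate for the eigenvalues that are not roots of unity and a separate cyclotomic bound for the root-of-unity eigenvalues (where the hypothesis \(\det(A-I)\neq 0\) enters only through excluding the eigenvalue \(1\)). Both arguments are complete. The paper's version is shorter and purely elementary, recycling the pigeonhole argument already used in the proof of Theorem \ref{Theorem 2}; yours costs some \(p\)-adic machinery but returns more, namely a set \(I\) of positive density rather than a single arithmetic progression, and the exact valuation of each factor \(\lambda _{j}^{N}-1\) for \(N\in I\). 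Two cosmetic slips that affect nothing: the cyclotomic bound \(\max_{\zeta }v_{p_{i}}(\zeta -1)\) can equal \(0\) (rather than being positive) when \(\gcd(r_{j},p_{i})=1\), and \(\det A\) equals \(\prod_{j}\lambda _{j}\) exactly, with no sign ambiguity.
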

\begin{proof}[Proof of Lemma \ref{Lemma 4.11}]
Since \(\det (A-I)\neq 0\), there exists \(\gamma \ge 1\) such that for every $i\in\{1,\ldots,s\}$, \(p_{i}^{\gamma }\) does not divide \(\det(A-I)\). Since \(\gc (p_{i},\det A)=1\) and \(p_{i}\) is prime, \(\gc (p_{i}^{\gamma },\det A)=1\) as well, and \(A\) is invertible modulo \( p_{i}^{\gamma }\). Proceeding as in the proof of Theorem \ref{Theorem 2}, we obtain an integer \(n_{i}\ge 2\) such that \(A^{n_{i}}\equiv I\mod p_{i}^{\gamma }\), and hence \(A^{ln_{i}}\equiv I\mod p_{i}^{\gamma }\) for every $l\ge 1$. Setting \(n_{0}=n_{1}\dots n_{s}\), we have \(A^{ln_{0}}\equiv I\mod p_{i}^{\gamma }\) for every \(l\ge 1\) and every $i\in\{1,\ldots,s\}$. Thus 
\(A^{ln_{0}+1}-I\equiv A-I\mod p_{i}^{\gamma }\) and \(\det(A^{ln_{0}+1}-I)\equiv \det(A-I)\mod p_{i}^{\gamma }\). Since \(\det(A-I)\not\equiv 0\mod p_{i}^{\gamma }\), we have \(\det(A^{ln_{0}+1}-I)\not\equiv  0\mod p_{i}^{\gamma }\) for every \(l\ge 1\) and every $i\in\{1,\ldots,s\}$, and the lemma follows by setting \(I=n_{0}.\N+1\).
\end{proof}

Our aim is now to show that under the assumptions of Theorem \ref{Theorem 3}, the following fact holds:
\begin{fact}\label{Fact 4.12}
 Suppose that \(A,B\in M_{d}(\Z)\) satisfy assumption (a), and either assumption (b) or (b') of Theorem \ref{Theorem 3}. There exist a strictly increasing sequence \((\nk)_{k\ge 1}\) of integers and a finite subset \(F\) of \(\Z\setminus\{0\}\) such that, for every \(k\ge 1\), the integers \(q_{k}:=\det (A^{\nk}-I)\) can be decomposed as \(q_{k}=h_{k}.r_{k}\), where \(h_{k}\in F\), $r_k\ge 1$, and \(\gc(r_{k},\det B)=1\).
\end{fact}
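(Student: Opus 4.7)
The plan is to reduce Fact \ref{Fact 4.12} to the following auxiliary claim: writing $p_1,\dots,p_s$ for the distinct prime divisors of $\det B$, there exist an infinite subset $I$ of $\N$ and an integer $\gamma\ge 1$ such that
\[
v_{p_j}\bigl(\det(A^N - \mathrm{Id})\bigr)\le \gamma\qquad \textrm{for every } N\in I \textrm{ and every } j\in\{1,\dots,s\},
\]
where $v_{p_j}$ denotes the $p_j$-adic valuation and $\mathrm{Id}$ the $d\times d$ identity matrix. Observe first that $\det(A^N-\mathrm{Id})\ne 0$: by assumption (a), $|\lambda_j|\ne 1$ forces $|\lambda_j^N|\ne 1$, so no $\lambda_j^N$ equals $1$. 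Granted the claim, one decomposes
\[
\det(A^N-\mathrm{Id})\;=\;\varepsilon_N\, p_1^{a_{1,N}}\cdots p_s^{a_{s,N}}\,r_N
\]
with $\varepsilon_N\in\{\pm 1\}$, $0\le a_{j,N}\le\gamma$, and $r_N\ge 1$ coprime to $p_1\cdots p_s$ and hence to $\det B$. Setting $h_N:=\varepsilon_N\,p_1^{a_{1,N}}\cdots p_s^{a_{s,N}}$ and taking for $F$ the finite set of all integers of this form, and enumerating $I$ as a strictly increasing sequence, produces the desired $(N_k)$, $(h_k)$, $(r_k)$.

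Under assumption (b) we have $\gcd(p_j,\det A)=1$ for every $j$, while $\det(A-\mathrm{Id})=\prod_j(\lambda_j-1)\ne 0$ again by (a). Lemma \ref{Lemma 4.11} then applies directly to $A$ and the primes $p_1,\dots,p_s$, yielding the auxiliary claim in one stroke.

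Under assumption (b') the matrix $A$ is triangular, so its eigenvalues $\lambda_1,\dots,\lambda_d$ are its diagonal entries and are therefore integers; since $\det A\ne 0$ and $|\lambda_i|\ne 1$, each satisfies $|\lambda_i|\ge 2$. Here $\det(A^N-\mathrm{Id})=\prod_{i=1}^{d}(\lambda_i^N-1)$, so it suffices to bound each $v_{p_j}(\lambda_i^N-1)$ individually. If $p_j\mid\lambda_i$, then $\lambda_i^N-1\equiv -1\pmod{p_j}$ and $v_{p_j}(\lambda_i^N-1)=0$ for every $N\ge 1$. If $\gcd(p_j,\lambda_i)=1$, Fact \ref{Fact 3.2} applied with its $p$ being the integer $\lambda_i$ (its proof only uses that $\lambda_i$ is a unit modulo a prime power, which is ensured by $\gcd(p_j,\lambda_i)=1$) produces integers $\gamma_{i,j}\ge 1$ and $M_{i,j}\ge 2$ such that $v_{p_j}(\lambda_i^N-1)<\gamma_{i,j}$ for every $N\in\N\setminus M_{i,j}\N$. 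Putting $M:=\prod_{(i,j)}M_{i,j}$, $I:=M\N+1$ and $\gamma:=d\cdot\max_{i,j}\gamma_{i,j}$, every $N\in I$ avoids each $M_{i,j}\N$, and summing the pointwise bounds over $i$ gives $v_{p_j}(\det(A^N-\mathrm{Id}))\le\gamma$; the set $I$ is evidently infinite.

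The main obstacle is case (b'): because the prime divisors of $\det B$ may also divide $\det A$, Lemma \ref{Lemma 4.11} cannot be invoked as a black box, and the triangular hypothesis must be exploited to factor $\det(A^N-\mathrm{Id})$ as the explicit product $\prod_i(\lambda_i^N-1)$ of integer factors, to which a coordinatewise variant of Fact \ref{Fact 3.2} can then be applied. The remainder of the argument is essentially bookkeeping of $p$-adic valuations.
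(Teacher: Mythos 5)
Your proof is correct and follows essentially the same route as the paper's: Lemma~\ref{Lemma 4.11} disposes of case (b), and case (b') is handled via the factorisation $\det(A^{N}-I)=\prod_{i=1}^{d}(\lambda_i^{N}-1)$ of integer factors together with Fact~\ref{Fact 3.2}, followed by the same bookkeeping of valuations. The one point of divergence is that you run the order argument of Fact~\ref{Fact 3.2} directly on the possibly negative diagonal entries $\lambda_i$ (a legitimate extension, as you note, since the proof only uses that $\lambda_i$ is a unit modulo $p_j^{\gamma}$ and that $\lambda_i^{v}-1\neq 0$ for $v\le u$), whereas the paper keeps Fact~\ref{Fact 3.2} as a black box applied to $|a_l|\ge 2$ with $u=2$ and then restricts to even exponents $N=2M$ to transfer the non-congruence from $|a_l|^{N}$ to $a_l^{M}$.
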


\begin{proof}[Proof of Fact \ref{Fact 4.12}] Recall that since $A$ has no eigenvalue of modulus $1$, $A^{N_k}-I$ is invertible in $M_d(\C)$, and hence \(q_{k}:=\det (A^{\nk}-I)\neq 0\).
If \(\det B=\pm 1\), it suffices to choose \(\nk=k\), \(k\ge 1\), and \(F=\{\pm 1\}\). So we suppose without loss of generality that \(|\det B|\ge 2\). We decompose \(\det B\) as 
\(\det B=\varepsilon\, p_{1}^{b_{1}}\dots p_{s}^{b_{s}}\), where $\varepsilon=\pm 1$, \(b_{i}\ge 1\) and \(p_{i}\) is a prime number for every $i\in\{1,\ldots,s\}$. We now treat separately two cases:
\par\smallskip
\noindent
\emph{Case} 1: assumption (b) is satisfied, i.e. \(\gc (\det A,\det B)=1\). In this case
\(
 \gc(p_{i},\det A)=1
\) for every $i\in\{1,\ldots,s\}$, and Lemma \ref{Lemma 4.11} applies: there exist \(\gamma \ge 1\) and an infinite set \(I\subseteq\N\) such that for every $i\in\{1,\ldots,s\}$ and every \(N\in I\), \(p_{i}^{\gamma }\) does not divide \(\det(A^{N}-I)\). We enumerate the set \(I\) as a strictly increasing sequence \((\nk)_{k\ge 1}\), and for each \(k\ge 1\) we decompose \(q_{k}=\det (A^{\nk}-I)\) as \(q_{k}=\varepsilon_k\,p_{1}^{a_{1,k}}\dots p_{s}^{a_{s,k}}r_k\), where $\varepsilon_k=\pm 1$, \(0\le a_{i,k}<\gamma \) and \(\gc(r_{k},p_{i})=1\) for each $i\in\{1,\ldots,s\}$. Setting 
\[
F=\bigl \{\pm  p_{1}^{a_{1}}\dots p_{s}^{a_{s}}\;;\;0\le a_{i}< \gamma,\;\;i=1\ldots s  \bigr \}
\]
yields the conclusion of Fact \ref{Fact 4.12} in this case.
\par\smallskip
\noindent
\emph{Case} 2: assumption (b') is satisfied. Let \(a_{1},\dots,a_{d}\) be the diagonal coefficients of \(A\), which belong to $\Z\setminus\{0\}$. For every \(N\ge 1\), 
\(\det(A^{N}-I)=\prod_{l=1}^{d}(a_{l}^{N}-1)\). By Fact \ref{Fact 3.2} applied with $u=2$, there exist for each $l\in\{1,\ldots,d\}$ integers \(N_{1,l}\ge 3,\dots,N_{s,l}\ge 3\) as well as \(\gamma_{l} \ge 1\) such that for every $i\in\{1,\ldots,s\}$ and every \(N\in \N\setminus\bigcup_{i=1}^{s}N_{i,l}\,.\,\N\),	
\[
  |a_{l}|^{N}\not\equiv 1\mod p_{i}^{\gamma _{l}}.
\]
Since the integers $N_{i,l}$ are all greater or equal to $3$, the set \(J=\N\setminus\bigcup_{l=1}^{d}\bigcup_{i=1}^{s}N_{i,l}\,.\,\N\) contains an infinite subset $J'$ consisting of \emph{even} integers. Let $I=\{M\ge 1\;;\; 2M\in J'\}$.
For every $i\in\{1,\ldots,s\}$, every $l\in\{1,\ldots,d\}$ and every \(M\in I\),
\[
  a_{l}^{M}\not\equiv 1\mod p_{i}^{\gamma _{l}}.
\]
Setting \(\gamma_{0} =\max_{1\le l\le d}\gamma _{l}\), we have thus
\[
a_{l}^{M}\not\equiv 1\mod p_{i}^{\gamma _{0}}
\]
for every $l\in\{1,\ldots,d\}$, $i\in\{1,\ldots,s\}$ and $M\in I$.
We now set \(\gamma :=d\gamma _{0}\). Then \(p_{i}^{\gamma }\) cannot divide the product 
\(\prod_{l=1}^{d}(a_{l}^{M}-1)\), since else \(p_{i}^{\gamma_0 }\) would divide one of the terms \(a_{l}^{M}-1\), \(1\le l\le d\). Hence \(\det(A^{M}-I)\not\equiv 0\mod p_{i}^{\gamma }\) for every $i\in\{1,\ldots,s\}$,  and we conclude the proof as in the first case.
\end{proof}

We now use the notation from Fact \ref{Fact 4.12}. Since, for each \(k\ge 1\), \(r_{k}\) and \(\det B\) are relatively prime, \(B\) is invertible modulo \(r_{k}\), and there exists an integer \(m_{k}\ge 1\) such that \(B^{m_{k}}\equiv I\mod r_{k}\) (see the proof of Theorem \ref{Theorem 2}). Hence \(h_{k}\,B^{m_{k}}\equiv h_{k}\,I\mod q_{k}\) for every \(k\ge 1\). If we define \(h_{0}\)
to be the product of all the elements of the finite set $F$, it follows that
\[
h_{0}.B^{m_{k}}\equiv h_{0}.I\mod q_{k}\quad \textrm{for every}\ k\ge 1.
\]
Recall that given a measure \(\mu \in\pp{}{d}\) and a \(d\)-tuple \(\nn=(n_{1},\dots,n_{d})\in\Z^{d}\), the \(\nn\)-th Fourier coefficient of the measure \(\mu \)
is defined as
\[
\muc(\nn)=\int_{\T^d}e^{2i\pi \pss{\nn}{\yy}}d\mu (\yy), \quad \textrm{where}\ \pss{\nn}{\yy}=\sum_{i=1}^{d}n_{i}y_{i}.
\]
Write \(\hh_{0}=(h_{0},\dots,h_{0})\). Here is now an analogue of Fact \ref{Fact 3.3} in our multidimensional setting:
\begin{fact}\label{Fact 4.13}
 Let \(\xx\in\td\) be such that \((A^{\nk}-I)\xx=0\) in \(\td\) for some \(k\ge 1\). Then
 \[
\wh{B^{lm_{k}}{\mu _{\xx}}}(\hh_{0})=\muc_{\xx}(\hh_{0})\quad \textrm{for every integer}\ l\ge 1.
\]
\end{fact}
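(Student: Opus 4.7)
The plan is to reduce the identity to the elementary observation that $q_{k}\tilde{\xx}\in\Z^{d}$ for any lift $\tilde{\xx}\in\R^{d}$ of the periodic point $\xx$. Fix $l\ge 1$. By the definition of the Fourier coefficient and of the pushforward,
\[
\widehat{T_{B^{lm_{k}}}\mu_{\xx}}(\hh_{0})=\fr{k}\sum_{j=0}^{\nk-1}\exp\Bigl(2i\pi\pss{(B^{lm_{k}})^{T}\hh_{0}}{A^{j}\tilde{\xx}}\Bigr),
\]
while the analogous expansion of $\muc_{\xx}(\hh_{0})$ is the same sum with $\hh_{0}$ in place of $(B^{lm_{k}})^{T}\hh_{0}$. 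It therefore suffices to verify that the quantity $\pss{(B^{lm_{k}})^{T}\hh_{0}-\hh_{0}}{A^{j}\tilde{\xx}}$ lies in $\Z$ for every $j\in\{0,\dots,\nk-1\}$.

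First I would check that every entry of the integer matrix $h_{0}B^{lm_{k}}-h_{0}I$ is divisible by $q_{k}$. The relation $B^{m_{k}}\equiv I\mod r_{k}$ from the paragraph preceding the statement means $B^{m_{k}}=I+r_{k}M$ for some $M\in M_{d}(\Z)$, so a direct binomial expansion yields $B^{lm_{k}}\equiv I\mod r_{k}$ for every $l\ge 1$. Hence $h_{0}B^{lm_{k}}\equiv h_{0}I\mod h_{0}r_{k}$; and since $h_{k}$ is a factor in the product $h_{0}=\prod_{h\in F}h$, we have $q_{k}=h_{k}r_{k}\mid h_{0}r_{k}$, so the claim follows. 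Transposing, the vector $\pmb{v}:=(B^{lm_{k}})^{T}\hh_{0}-\hh_{0}$ can be written as $\pmb{v}=q_{k}\pmb{w}$ for some $\pmb{w}\in\Z^{d}$. On the other hand, because $\xx$ has $T_{A}$-period $\nk$, any lift $\tilde{\xx}\in\R^{d}$ satisfies $(A^{\nk}-I)\tilde{\xx}\in\Z^{d}$; and since $A$ has no eigenvalue of modulus $1$ by assumption (a), the matrix $A^{\nk}-I$ is invertible in $M_{d}(\R)$ and Cramer's rule yields $(A^{\nk}-I)^{-1}=q_{k}^{-1}\,\textrm{adj}\,(A^{\nk}-I)$ with $\textrm{adj}\,(A^{\nk}-I)\in M_{d}(\Z)$. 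Consequently $q_{k}\tilde{\xx}\in\Z^{d}$, and because $A\in M_{d}(\Z)$ also $q_{k}A^{j}\tilde{\xx}\in\Z^{d}$ for every $j\ge 0$.

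Combining these ingredients gives $\pss{\pmb{v}}{A^{j}\tilde{\xx}}=\pss{\pmb{w}}{q_{k}A^{j}\tilde{\xx}}\in\Z$ for each $j$, so every exponential in the display coincides with the corresponding one in the expansion of $\muc_{\xx}(\hh_{0})$, and the two Fourier sums agree term-by-term. The conceptual point is that $q_{k}=\det(A^{\nk}-I)$ plays a double role: via Cramer's rule it governs the denominators of the $\nk$-periodic points of $T_{A}$, while by the defining property of $m_{k}$ it annihilates the matrix $h_{0}B^{lm_{k}}-h_{0}I$; the two occurrences of $q_{k}$ cancel exactly. I do not anticipate any serious obstacle beyond keeping careful track of lifts from $\T^{d}$ to $\R^{d}$.
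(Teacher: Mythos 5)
Your proposal is correct and follows essentially the same route as the paper: both arguments rest on the two facts that $h_{0}B^{lm_{k}}\equiv h_{0}I\mod q_{k}$ (obtained from $B^{m_{k}}\equiv I\mod r_{k}$ and $h_{k}\mid h_{0}$) and that $q_{k}$ clears the denominators of the $\nk$-periodic points via $\xx=q_{k}^{-1}\,\textrm{adj}\,(A^{\nk}-I)\lle_{k}$. The only cosmetic difference is that you transfer $B^{lm_{k}}$ onto $\hh_{0}$ by transposition before pairing, whereas the paper keeps it acting on $A^{j}\xx$; the term-by-term identification of the two Fourier sums is identical.
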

\begin{proof}[Proof of Fact \ref{Fact 4.13}]
Recall that
\[
\mu _{\xx}=\fr{k}\sum_{j=0}^{\nk -1}\de{A^{j}\xx}.
\]
 For every \(n\in\Z\), 
 \begin{align*}
  \wh{B^{n}{\mu _{\xx}}}(\hh_{0})&=\fr{k}\sum_{j=0}^{\nk-1}e^{2i\pi \pss{\hh_{0}}{B^{n}A^{j}\xx}}=\fr{k}\sum_{j=0}^{\nk-1}e^{2i\pi \pss{\pmb{1}}{h_{0}B^{n}A^{j}\xx}}\intertext{while}
  \wh{\mu _{\xx}}(\hh_{0})&=\fr{k}\sum_{j=0}^{\nk-1}e^{2i\pi \pss{\pmb{1}}{h_{0}A^{j}\xx}}
 \end{align*}
 where \(\pmb{1}=(1,\dots,1)\).
%  \par\medskip
 Since \((A^{\nk}-I)\xx=0\) in \(\td\), there exists \(\lle_{k}\in\Z^{d}\) such that \((A^{\nk}-I)\xx=\lle_{k}\), the equality being this time in \(\R^{d}\), so that \(\xx=\dfrac{1}{q_{k}}\,\textrm{adj}\,(A^{\nk}-I)\lle_{k}\).
 \par 
We know that \(h_{0}\,B^{m_{k}}\equiv h_{0}\,I\ \textrm{mod}\ q_{k}\), so that \(h_{0}\,B^{lm_{k}}\equiv h_{0}\,I\ \textrm{mod}\ q_{k}\) for every \(l\ge 1\). This means that there exists a matrix \(C_{k,l}\in M_{d}(\Z)\) such that \(h_{0}.B^{lm_{k}}=h_{0}I+q_{k}.C_{l,k}\). Hence for every \(j\in\{0,\ldots, \nk\}\), we have
\[
h_{0}\,B^{lm_{k}}A^{j}\xx=h_{0}\,A^{j}\xx+q_{k}\,C_{k,l}A^{j}\xx.
\]
Since \(\xx=\dfrac{1}{q_{k}}\,\textrm{adj}\,(A^{\nk}-I)\lle_{k}\) with \(\lle_{k}\in\Z^{d}\), the vector \(q_{k}C_{k,l}A^{j}\xx\) belongs to \(\Z^{d}\), and thus \(h_{0}\,B^{lm_{k}}A^{j}\xx=h_{0}\,A^{j}\xx\) in \(\td\). It follows that
\[
\wh{B^{lm_{k}}\mu _{\xx}}(\hh_{0})=\muc_{\xx}(\hh_{0})\quad \textrm{for every}\ l\ge 1,
\]
and Fact \ref{Fact 4.13} is proved.
\end{proof}

A direct consequence of Fact \ref{Fact 4.13} is that \(B^{n}\mu _{\xx}\nflw\leb_d\) when  \(\fl{n}{+\infty}\) as soon as \(\wh{\mu} _{\xx}(\hh_{0})\neq 0\).
Consider, for each \(0<\gamma <1\), the set 
\[
\gro{A,B}^{\,\gamma }=\bigl \{ \mu \in\pp{A}{d}\;;\;\muc(\hh_{0})\neq 0\ \textrm{and}\ \forall\,n_{0}\ge 1,\ \exists\,n\ge n_{0},\ |\wh{B^{n}\mu }(\hh_{0})|>\gamma |\muc(\hh_{0})| \bigr\} 
\]
which is clearly a \(G_{\delta }\) subset of \((\pp{A}{d},\we)\). In order to prove that it is dense, we proceed as in the proof of Lemma \ref{Lemma 2.3}, but using Theorem \ref{Theorem 4.10} instead of Theorem \ref{Theorem 2.10}. Let \(\mathcal{V}\) be a non-empty open set in \((\pp{A}{d},\we)\). By Theorem \ref{Theorem 4.10}, there exists a convex combination
\[
\mu =\sum_{i=1}^{r}a_{i}\mu _{\xx_{i}},\quad a_{i}\ge 0,\quad  \sum_{i=1}^{r}a_{i}=1
\]
of measures \(\mu _{\xx_{i}}\in\dd_{A,(\nk)}\) which belongs to \(\mathcal{V}\). 
\par\smallskip
Let \(k_{i}\) be such that \((A^{N_{k_{i}}}-I)\xx_{i}=0\), \(1\le i\le r\). By Fact \ref{Fact 4.13}, \(\wh{B^{lm_{k_{i}}}\mu _{\xx_{i}}}(\hh_{0})=\muc_{\xx_{i}}(\hh_{0})\) for every \(l\ge 1\). Setting \(m_{0}=m_{k_{1}}\dots m_{k_{r}}\), we have that 
\(\) \(\wh{B^{lm_{0}}\mu _{\xx_{i}}}(\hh_{0})=\muc_{\xx_{i}}(\hh_{0})\) for every \(l\ge 1\) and every \(i\in\{1,\ldots,r\}\). Hence \(\wh{B^{lm_{0}}}(\hh_{0})=\muc(\hh_{0})\) for every \(l\ge 1\). 
\par\medskip
If \(\muc(\hh_{0})\neq 0\), 
it follows that \(\mu \) belongs to \(\gro{A,B}^{\,\gamma }\). 
If \(\muc(\hh_{0})=0\), the measure \(\mu _{\rho }:=(1-\rho )\mu +\rho\delta _{\pmb{0}}\) belongs to \(\mathcal{V}\) if \(0<\rho <1\) is sufficiently small, and \(\muc_{\rho }(\hh_{0})=\rho\neq 0\). Also \(\wh{B^{lm_{0}}\mu _{\rho }}(\hh_{0})=(1-\rho )\wh{B^{lm_{0}}\mu }(\hh_{0})+\rho =\wh{\mu _{\rho }}(\hh_{0})\) for every \(l\ge 0\), and hence \(\mu _{\rho }\) belongs to \(\gro{A,B}^{\,\gamma }\). We have thus shown that \(\gro{A,B}^{\,\gamma }\) is a dense \(G_{\delta }\) subset of \((\pp{A}{d},\we)\). 
\par\medskip
Any measure \(\mu \in\gro{A,B}^{\,\gamma }\) is such that
$\limsup_{n\rightarrow +\infty} |\wh{B^{n}\mu }(\hh_{0})|>0$, and hence (since \(\hh_{0}\neq \pmb{0}\)) such that 
\(B^{n}\mu \nflw \leb_{d}\) for every \(\mu \in\gro{A,B}^{\,\gamma }\). So the set 
\[
\gro{A,B}^{\,0 }=\bigl\{\mu \in\pp{A}{d}\;;\;B^{n}\mu \nflw\leb_{d}\bigr\}
\]
is residual in \((\pp{A}{d},\we)\).
\par\smallskip
In order to complete the proof, it remains to show the following analogue of Fact \ref{Fact 3.4}:
\begin{fact}\label{Fact 3.14}
 The set \(\pp{A,c}{d}\) is a dense \(G_{\delta }\) subset of \((\pp{A}{d},\we)\).
\end{fact}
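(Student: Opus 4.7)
The plan is to mirror the proof of Fact~\ref{Fact 3.4} in the multidimensional setting, handling the $G_\delta$ property and density separately.

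For the $G_\delta$ property, I would invoke the same general Polish space result cited in Fact~\ref{Fact 3.4}: the set of continuous probability measures on any Polish space is a $G_\delta$ subset of the space of all probability measures (see for instance \cite{DGS}*{Proposition 2.16}). Since $\pp{A}{d}$ is $\we$-closed in $\pp{}{d}$, its intersection with this $G_\delta$ set is $G_\delta$ in $(\pp{A}{d},\we)$.

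For density, I would reduce via the convolution trick to constructing a sequence $(\nu_k)_{k\ge 1}$ in $\pp{A,c}{d}$ such that $\nu_k\flw\delta_{\pmb{0}}$. Granted such a sequence, for any $\mu\in\pp{A}{d}$ the convolutions $\nu_k*\mu$ belong to $\pp{A,c}{d}$. Indeed, since $T_A$ is a group endomorphism of $\T^d$, one has $T_A(\nu_k*\mu)=T_A\nu_k*T_A\mu=\nu_k*\mu$, so $\nu_k*\mu$ is $T_A$-invariant; it is also continuous because, for every $\xx\in\td$,
\[
(\nu_k*\mu)(\{\xx\})=\int_{\td}\nu_k(\{\xx-\yy\})\,d\mu(\yy)=0
\]
by continuity of $\nu_k$. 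Joint $\we$-continuity of convolution on the compact abelian group $\td$ then yields $\nu_k*\mu\flw\delta_{\pmb{0}}*\mu=\mu$, so density of $\pp{A,c}{d}$ in $\pp{A}{d}$ follows.

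The substantive step is therefore the construction of the sequence $(\nu_k)$, which I expect to be the main obstacle. When $A$ is diagonal with integer diagonal entries $a_1,\dots,a_d$ of absolute value at least $2$, one can adapt directly the $p$-Bernoulli construction of Fact~\ref{Fact 3.4} by taking tensor products $\nu_k=\mu_{\Theta_k^{(1)}}\otimes\cdots\otimes\mu_{\Theta_k^{(d)}}$ of $|a_j|$-Bernoulli measures with parameters $\Theta_k^{(j)}\to(1,0,\dots,0)$; each $\nu_k$ is continuous, $T_A$-invariant, and $\nu_k\flw\delta_{\pmb{0}}$. For a general matrix $A$ satisfying assumption~(a) of Theorem~\ref{Theorem 3}, which need not be conjugate over $\Z$ to a diagonal matrix, a direct tensor-product construction is unavailable. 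In that case I would instead invoke a symbolic coding of $(\td,T_A)$ by a Markov partition, available under the hypothesis that no eigenvalue of $A$ lies on the unit circle, and pull back Bernoulli-type measures on the associated subshift of finite type whose parameters concentrate on a cylinder containing the coding of $\pmb{0}$; the resulting pushforward measures are continuous (as images of atomless measures under a surjective factor map), $T_A$-invariant, and converge to $\delta_{\pmb{0}}$. The technical heart of the argument lies in this last verification.
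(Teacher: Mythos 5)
Your reduction of density to producing a sequence $(\nu_k)_{k\ge 1}$ in $\pp{A,c}{d}$ with $\nu_k\flw\delta_{\pmb{0}}$ is sound: since $T_A$ is a group endomorphism of $\td$, convolution with a $T_A$-invariant measure preserves invariance, it preserves continuity, and $\widehat{\nu_k*\mu}(\nn)=\widehat{\nu_k}(\nn)\widehat{\mu}(\nn)\to\widehat{\mu}(\nn)$. Your tensor products of Bernoulli measures also settle the case where $A$ is genuinely diagonal with integer entries (up to a small point about negative entries, where $T_{a_j}$-invariance of $\mu_{\Theta}$ requires symmetrisation). The genuine gap is the general case, which is essentially the whole content of the statement: assumption (a) of Theorem~\ref{Theorem 3} only requires $A$ to be diagonalisable over $\C$ with no eigenvalue of modulus $1$. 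Such an $A$ need not be conjugate over $\Z$ (or $\Q$) to a diagonal matrix, need not be invertible over $\Z$ (so $T_A$ is a non-invertible endomorphism), and may have eigenvalues both inside and outside the unit circle. For such maps a bounded-to-one Markov-partition coding of $(\td,T_A)$ itself is not an off-the-shelf tool; the paper explicitly notes that even the periodic specification property is known only for hyperbolic \emph{automorphisms}, and Theorem~\ref{Theorem 4.10} exists precisely because this symbolic machinery is unavailable here. Your fallback argument therefore rests on an unproved existence claim, and (even where a coding exists) you would still need $\pmb{0}$ to be coded by a shift-fixed sequence and the factor map to be countable-to-one on the relevant fibers to conclude continuity and convergence to $\delta_{\pmb{0}}$.

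The paper avoids constructing continuous invariant measures altogether. For each $\tau>0$ it considers the set $F_{\tau}$ of measures in $\pp{A}{d}$ having an atom of mass at least $\tau$, observes that $F_\tau$ is $\we$-closed, and shows it is nowhere dense: by Theorem~\ref{Theorem 4.10} applied with a sequence of \emph{prime} periods $N_k>1/\tau$, every non-empty open set contains a convex combination of CO-measures $\mu_{\xx_i}$ all of whose atoms have mass at most $1/N_{k_i}<\tau$. Then $\pp{A,c}{d}=\pp{A}{d}\setminus\bigcup_{l\ge 1}F_{2^{-l}}$ is a dense $G_\delta$ by Baire. If you want to keep your convolution reduction, you must supply the approximating continuous measures by some argument valid under assumption (a) alone; otherwise the Baire-category argument via $F_\tau$ and Theorem~\ref{Theorem 4.10} is the way through.
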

\begin{proof}[Proof of Fact \ref{Fact 3.14}]
 As mentioned already in the proof of Fact \ref{Fact 3.4}, the set \(\pp{A,c}{d}\) is known to be a \(G_{\delta }\) subset of \((\pp{A}{d},\we)\), so that only its density remains to be proved. The argument for this follows closely the proof of \cite{S1}*{Th. 2}, and reproves at the same time that \(\pp{A,c}{d}\) is \(G_{\delta }\).
 \par\medskip 
 For every \(\tau >0\), let
\[
F_{\tau}:=\bigl\{\mu \in\pp{A}{d}\;;\;\exists\,\xx\in\td\ \textrm{such that}\ \mu (\{\xx\})\ge\tau \bigr\}.
\]
Then the set \(F_{\tau}\) is easily seen to be closed in \((\pp{A}{d},\we)\). Let us now show that \(F_{\tau}\) is nowhere dense. Let \((\nk)_{k\ge 1}\) be a strictly increasing sequence of \emph{prime numbers} such that \(N_{1}>1/\tau \). By Theorem \ref{Theorem 4.10}, the convex hull of the set 
\[
\dd_{A,(\nk)}=\bigl\{\mu _{\xx}\;;\;A^{\nk}\xx=\xx\ \textrm{for some}\ k\ge 1\bigr\}
\]
is dense in \((\pp{A}{d},\we)\). Hence, given a non-empty open subset \(\mathcal{V}\) of \((\pp{A}{d},\we)\), there exist vectors \(\xx_{1},\dots,\xx_{r}\) in \(\td\), integers $k_1,\dots, k_r\ge 1$ and coefficients \(a_{1},\dots,a_{r}\ge 0\) with \(\sum_{i=1}^{r}a_{i}=1\) and \(A^{N_{k_{i}}}\xx_{i}=\xx_{i}\) for each \(1\le i\le r\) such that 
\[
\mu =\sum_{i=1}^{r}a_{i}\mu _{\xx_{i}}\quad \textrm{belongs to}\ \mathcal{V}.
\]
Since \(N_{k_{i}}\) is prime, the minimal period of \(\xx_{i}\) is \(N_{k_{i}}\), and thus 
\[
\mu _{\xx_{i}}(\{\xx\})\le\dfrac{1}{N_{k_{i}}}<{\tau }\quad \textrm{for every}\ \xx\in\td.
\]
It follows that \(\mu (\{\xx\})<\tau \) for every \(\xx\in\td\), and \(\mu \) does not belong to \(F_{\tau}\). So \(F_{\tau}\) is nowhere dense in \((\pp{A}{d},\we)\), and 
\[
\pp{A,c}{d}=\pp{A}{d}\setminus\bigcup_{l\ge 1}F_{2^{-l}}
\]
is a dense \(G_{\delta }\) subset of \((\pp{A}{d},\we)\) by the Baire Category Theorem.
\end{proof}

The proof of Theorem \ref{Theorem 3} is completed by combining Fact \ref{Fact 3.14} with the assertion that \(\gro{A,B}^{0}\) is residual in \((\pp{A}{d},\we)\).\hfill\(\square\)

\begin{remark}\label{Remark 3.15}
 The proof of Fact \ref{Fact 3.14} would apply equally well to Fact \ref{Fact 3.4}, but since the result is more standard in the one-dimensional case, we preferred to mention the classical arguments.
\end{remark}

\section{Further results and remarks}\label{Section 5}
\subsection{A complement to a result of Johnson and Rudolph}\label{Section 5.1}
Let \(p\ge 2\) be an integer, and let \(\cn_{n\ge 0}\) be a sequence of positive integers. We have recalled in the introduction and in Section \ref{Section 3} conditions on \(\cn\) implying that each measure \(\mu \in\pp{p}{}\) which is ergodic and of positive entropy is \(\cn\)-generic --- thus showing that the set
\[
G'_{p,\cn}:=\bigl\{\mu \in\pp{p}{}\;;\;T_{c_{n}}\mu \flw\leb\quad  \textrm{along a sequence of upper density}\ 1\bigr\}
\]
is residual in \((\pp{p}{},\we)\). 
We present here an alternative harmonic analysis approach to this kind of result. It has the benefit of circumventing the arguments that depend on positive entropy, when applicable.

\begin{theorem}\label{Theorem 5.10}
 Let \(p\ge 2\), and let \({\cn}_{n\ge 0}\) be a sequence of integers  satisfying the following condition:
\par\medskip
 \((\star)\)\hfill
 \begin{minipage}[c]{13cm}
  there exists a sequence \((\mu_{k})_{k\ge 1}\) of elements of \(\pp{p}{}\) such that 
  \(\mu _{k}\flw \delta _{1}\), and moreover, the set 
  \(
\bigl\{n\ge 1\;;\;|\muc_{k}(a.c_{n})|<\varepsilon \bigr\}
\)
has density \(1\) for every \(a\in\Z\setminus\{0\}\), every \(\varepsilon >0\) and every \(k\ge 1\).
 \end{minipage}
 \par\medskip
\noindent Then the set \(G'_{p,\cn}\) is residual in \((\pp{p}{},\we)\).
\end{theorem}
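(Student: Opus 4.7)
The plan is to exhibit a dense $G_{\delta}$ subset of $(\pp{p}{}, \we)$ contained in $G'_{p,\cn}$, which by the definition of \emph{residual} suffices. For every $a \in \Z \setminus \{0\}$ and every $j, k, N \geq 1$, the set
\[
U(a,j,k,N) := \bigl\{\mu \in \pp{p}{} \;;\; \#\{n \leq N : |\muc(ac_n)| < 1/j\} > (1-1/k)N\bigr\}
\]
is open in $(\pp{p}{}, \we)$, because each map $\mu \mapsto \muc(m)$ is $\we$-continuous and hence $\mu \mapsto \#\{n \leq N : |\muc(ac_n)| < 1/j\}$ is lower-semicontinuous on $\pp{p}{}$. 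I would then consider
\[
H := \bigcap_{a \in \Z \setminus \{0\}} \bigcap_{j,k \geq 1} \bigcap_{N_0 \geq 1} \bigcup_{N \geq N_0} U(a,j,k,N),
\]
which is manifestly $G_{\delta}$. A measure $\mu$ lies in $H$ exactly when, for every $a \in \Z \setminus \{0\}$ and every $\varepsilon > 0$, the set $\{n : |\muc(ac_n)| < \varepsilon\}$ has upper density equal to $1$.

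Next I would prove the inclusion $H \subseteq G'_{p,\cn}$ by a standard diagonal argument. Fix $\mu \in H$ and a compatible metric $\rho$ on $(\pp{}{}, \we)$. For each $k \geq 1$ the set $B_k := \{n : \rho(T_{c_n}\mu, \leb) < 1/k\}$ contains a finite intersection of upper-density-$1$ sets of the form $\{n : |\muc(ac_n)| < \varepsilon\}$, and so itself has upper density $1$. Picking integers $N_k \nearrow \infty$ with $\#(B_k \cap [1,N_k]) > (1 - 1/k)N_k$ and $N_{k-1}/N_k \to 0$, and setting $E := \bigcup_{k \geq 1} (B_k \cap (N_{k-1}, N_k])$ with $N_0 := 0$, the estimate $\#(E \cap [1,N_k]) \geq (1-1/k)N_k - N_{k-1}$ forces $\bar{d}(E) = 1$; simultaneously, any $n \in E \cap (N_{k-1}, N_k]$ satisfies $\rho(T_{c_n}\mu, \leb) < 1/k$, so $T_{c_n}\mu \flw \leb$ along $E$ and $\mu \in G'_{p,\cn}$.

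Density of $H$ in $\pp{p}{}$ is the step where assumption $(\star)$ is used, and I would obtain it by convolution. Given $\mu \in \pp{p}{}$ and the measures $(\mu_k)$ provided by $(\star)$, set $\nu_k := \mu_k * \mu$. The Fourier identity $\widehat{T_m \nu}(a) = \wh{\nu}(am)$ implies $T_p(\mu_k * \mu) = T_p\mu_k * T_p\mu = \mu_k * \mu$, so $\nu_k \in \pp{p}{}$. The pointwise convergence $\wh{\nu_k}(a) = \wh{\mu_k}(a)\muc(a) \to \muc(a)$ for every $a \in \Z$ (since $\mu_k \flw \de{1}$) yields $\nu_k \flw \mu$. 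Finally, $|\wh{\nu_k}(ac_n)| \leq |\wh{\mu_k}(ac_n)|$, so $(\star)$ implies that $\{n : |\wh{\nu_k}(ac_n)| < \varepsilon\}$ contains a set of density $1$ and in particular has upper density $1$; thus $\nu_k \in H$. This shows that $H$ is dense, and Baire's theorem concludes the argument.

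The only technical point requiring care is the diagonal construction of $E$, since upper density $1$ is not preserved under countable intersection: the sets $B_k$ must be combined by choosing the $N_k$'s after enumerating all the relevant pairs $(a,j)$ into a single sequence. Apart from this, the $G_{\delta}$ unfolding and the convolution computations are essentially formal, and the argument neatly sidesteps any entropy hypothesis by trading ergodicity for the quantitative Fourier decay supplied by $(\star)$.
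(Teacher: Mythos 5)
Your overall architecture---a $G_{\delta}$ set defined through smallness of the Fourier coefficients $\muc(a.c_{n})$ on large portions of $\{1,\dots,N\}$, made dense by convolving an arbitrary $\nu\in\pp{p}{}$ with the measures $\mu_{k}$ from $(\star)$---is exactly the paper's, and your density step (the computation $\wh{\nu_{k}}(m)=\wh{\mu_{k}}(m)\wh{\nu}(m)$, the invariance of $\mu_k*\nu$, and the bound $|\wh{\nu_{k}}(a c_{n})|\le|\wh{\mu_{k}}(ac_{n})|$) is correct. The gap is in the inclusion $H\subseteq G'_{p,\cn}$. Your set $H$ places the quantifier over $a$ \emph{outside} the quantifier ``$\exists N\ge N_{0}$'', so membership in $H$ only gives, for each $a$ and $\varepsilon$ separately, that $S_{a,\varepsilon}=\{n\;;\;|\muc(ac_{n})|<\varepsilon\}$ has upper density $1$. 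Your next sentence asserts that $B_{k}$ contains a finite intersection of such sets ``and so itself has upper density $1$''---but upper density $1$ is not preserved even under intersection of \emph{two} sets: take $A=\bigcup_{k\ \mathrm{even}}\,(k!,(k+1)!]$ and $B=\bigcup_{k\ \mathrm{odd}}\,(k!,(k+1)!]$; both have upper density $1$ while $A\cap B=\emptyset$. So for a general $\mu\in H$ the sets $B_{k}$ need not have upper density $1$, and the diagonal choice of the $N_{k}$'s that you flag at the end cannot repair this, because the failure occurs already at the level of a single $B_{k}$, before any diagonalisation. In short, $H$ is a strictly weaker condition than membership in $G'_{p,\cn}$, and the inclusion is not established.

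The fix is the one the paper implements: build the uniformity over $a$ into the open sets, i.e.\ replace $U(a,j,k,N)$ by the (still open) condition that there is a single $F\subseteq\{1,\dots,N\}$ with $\#F\ge(1-1/k)N$ on which $|\muc(a.c_{n})|<1/j$ holds \emph{simultaneously for all} $0<|a|\le a_{0}$; the resulting $G_{\delta}$ set (the paper's $\widetilde{G}_{p,\cn}$) is genuinely contained in $G'_{p,\cn}$ by your diagonal argument. Density is unharmed: your measures $\nu_{k}=\mu_{k}*\nu$ satisfy the \emph{full density} $1$ version of the smallness condition for each $a$, and sets of density $1$ (unlike upper density $1$) are stable under finite intersection, which is precisely the point the paper makes between its displays (9) and (10). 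Finally, note that $G'_{p,\cn}$ is meant to consist of \emph{continuous} measures, so you should intersect your dense $G_{\delta}$ set with $\pp{p,c}{}$, which is residual by Fact 3.4 of the paper.
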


A word about terminology: saying that a sequence \((\nu _{n})_{n\ge 1}\) of measures converges to \(\nu \) along a subset of upper density \(1\) means that for any neighborhood \(\mathcal{V}\) of \(\nu \) in \(\mathcal{P}(\T)\), the set \(\{n\ge 1\;;\;\nu _{n}\in \mathcal{V}\}\) has upper density \(1\), i.~e. 
\[
\limsup_{N\to+\infty}\,\dfrac{1}{N}\,\#\,\bigl \{ 1\le n\le N\;;\;\nu _{n}\in \mathcal{V}\bigr\}=1. 
\]
This is equivalent to the following property: for any \(a_{0}\ge 1\) and any \(\varepsilon >0\), the set 
\[
\bigl \{ n\ge 1\;;\;|\wh{\nu }_{n}(a)-\wh{\nu }(a)|<\varepsilon \quad \textrm{for every}\ a\in\Z\ \textrm{with}\ |a|\le a_{0}\bigr\} 
\]
has upper density \(1\). 
In this case, one can construct a strictly increasing sequence \((N_{k})_{k\ge 1}\) of integers such that
\[
\dfrac{1}{N_{k}}\,\#\,\bigl \{ 1\le n\le N_{k}\;;\;\forall\,|a|\le k, \ |\wh{\nu }_{n}(a)-\wh{\nu }(a)|<2^{-k} \bigr\}\ge 1-2^{-k}
\]
for every \(k\ge 1\), and \(N_{k+1}\ge 2^{k}N_{k}\). If we consider the strictly increasing sequence \((n_{j})_{j\ge 1}\) obtained by enumerating the set 
\[
D=\bigcup_{k\ge 1}\bigl \{ N_{k-1}<n\le N_{k}\;;\;\forall\,|a|\le k, \ |\wh{\nu }_{n}(a)-\wh{\nu }(a)|<2^{-k}\bigr\} 
\]
(with the convention that \(N_{0}=0\)), we obtain that \(D=\{n_{j}\;;\;j\ge 1\}\) has upper density \(1\) and that 
\(\wh{\nu }_{n_{j}}(a)\longrightarrow\wh{\nu }(a)\) as \(j\longrightarrow+\infty\) for every \(a\in\Z\).

\begin{proof}[Proof of Theorem \ref{Theorem 5.10}]
We first observe that the set \(G'_{p,\cn}\) can be written as \(G'_{p,\cn}=\widetilde{G}_{p,\cn}\cap\mathcal{P}_{p,c}(\T)\), where
\begin{align*}
\widetilde{G}_{p,\cn}=\bigl \{ &\mu \in\mathcal{P}_{p}(\T)\;;\;\forall\,N_{0},a_{0}\ge 1,\quad \forall\,\varepsilon ,\delta \in (0,1)\cap\Q\\
&\exists\,N>N_{0},\quad \exists\,F\subseteq\{1,\dots,N\}\quad \textrm{with}\ \#\,F\ge(1-\delta )N\\
&\textrm{such that}\ \forall\,a\in\Z\ \textrm{with}\ 0<|a|\le a_{0},\ \forall\,n\in F, \;|\wh{\mu }(a.c_{n})|<\varepsilon \bigr\} .
\end{align*}
The set \(\widetilde{G}_{p,\cn}\) is clearly \(G_{\delta} \) in \((\mathcal{P}_{p}(\T),w^{*})\). Since \(\mathcal{P}_{p,c}(\T)\) is residual in \(\mathcal{P}_{p}(\T)\), it suffices to show that \(\widetilde{G}_{p,\cn}\) is dense in \(\mathcal{P}_{p}(\T)\). In order to do this, we are going to exhibit a dense set of measures \(\mu \in\mathcal{P}_{p}(\T)\) with the following property:
\begin{equation}\label{Equation 9}
 \forall\,a\in\Z\setminus\{0\},\ \forall\,\varepsilon >0,\  \textrm{the set}\ \{n\ge 1\;;\;|\wh{\mu }(ac_{n})|<\varepsilon \}\ \textrm{has density 1.}
\end{equation}
Since the intersection of finitely many sets of density \(1\) is again of density \(1\), the measures in this set will be such that
\begin{equation}\label{Equation 10}
 \forall\,a_0\geq 1,\ \forall\,\varepsilon >0,\ \textrm{the set}\ \{n\ge 1\;;\;\forall\,0<|a|\le a_{0},\ |\wh{\mu }(ac_{n})|<\varepsilon \}\ \textrm{has density 1}
\end{equation}
and hence upper density \(1\). Such measures will hence belong to the set \(G'_{p,\cn}\). 

Our assumption \((\star)\) states that the measures \(\mu _{k}\), \(k\ge 1\), satisfy (\ref{Equation 9}). Fix \(\nu \in\pp{p}{}\), and set \(\nu _{k}=\mu _{k}*\nu \) for every \(n\ge 1\). For any 
\(\varepsilon >0\), the set 
\(\{n\ge 1\;;\;|\wh{\nu }_{k}(a.c_{n})|<\varepsilon \}\) has density $1$, and it follows that the measures \(\nu _{k}\) satisfy (\ref{Equation 9}). Since \(\nu _{k}\flw\nu \) as \(\fl{k}{+\infty,}\) this concludes the proof of Theorem \ref{Theorem 5.10}.
\end{proof}

Theorem \ref{Theorem 5.10} applies for instance to the case where \(c_{n}=q^{n}\), \(n\ge 0\), provided that \(p,q\ge 2\) are two multiplicatively independent integers, and allows to retrieve \cite{JR}*{Th. 8.2}, which states that $G'_{p,(q^n)}$ is residual in \((\mathcal{P}_{p}(\T),w^{*})\). 

To this aim, it suffices to exhibit a sequence \((\mu _{k})_{k\ge 1}\) of measures from \(\pp{p}{}\) satisfying (\ref{Equation 9}) and such that \(\mu _{k}\flw\delta _{1}\).
The measures that we shall consider are the Bernoulli convolutions \(\mu _{\Theta }\) introduced at the end of the proof of Theorem \ref{Theorem 1}, where \(\Theta =(\theta _{0},\dots,\theta _{p-1})\) is a \(p\)-tuple of elements of \((0,1)\) with \(\sum_{j=0}^{p-1}\theta _{j}=1\). 
They are \(T_{p}\)-invariant, and
\[
\wh{\mu}_\Theta (m)=\prod_{n\ge 1}\Bigl ( \theta _{0}+\sum_{j=1}^{p-1}\theta _{j}e^{2i\pi mjp^{-n}}\Bigr) \quad\textrm{for every } m\in\Z.
\]
It is shown by Lyons in \cite{L1} and by Feldman and Smorodinsky in \cite{FS} that \(T_{q^{n}}\mu _{\Theta }\flw\leb\)
as \(\fl{n}{+\infty.}\) Since \(\mu _{\Theta }\flw\delta _{1}\) as 
\(\fl{\Theta }{(1,0,\dots,0),}\) assumption \((\star)\) from Theorem \ref{Theorem 5.10} is satisfied, and \(G'_{p,(q^{n})}\) is residual in \((\pp{p}{},\we)\).
\par\medskip
The behaviour of the Fourier coefficients of Bernoulli convolutions has been studied extensively, particularly when $p$ is equal to $2$ or $3$;
see for instance the classical book \cite{KS} by Kahane and Salem. The reader is also encouraged to have a look at related recent papers like \cites{Bremont, VarjuYu} and the references therein. An important work on the subject is that of Blum and Epstein \cite{BE}, where the authors provide upper and lower bounds on 
\(|\wh{\mu}_{\Theta }(m)|^{2}\) which allow them to give a characterisation of sequences of positive integers along which \(\wh{\mu}_{\Theta  }(m)\) tends to \(0\). In the case \(p=2\),  this characterisation is given in terms of the order of magnitude of \(R(m)\), which is the number of \emph{runs}, \emph{i.~e.} of maximal blocks of the same digit \(0\) or \(1\), appearing in the binary expansion of \(m\). Equivalently, \(R(m)\) is the number of digits changes in the binary expansion of \(m\). Then as soon as \(\Theta \neq(1/2,1/2)\) (in which case \(\mu _{\Theta }\) is the Lebesgue measure on \(\T\)), there ex ist two constants \(C_{1},C_{2}>0\) such that, for every \(m\in\Z\),
\[
\exp(-C_{2}R(m))\le|\wh{\mu }_{\Theta }(m)|\le\exp(-C_{1}R(m)).
\]
It follows that if \((m_{k})\) is any strictly increasing sequence of integers, we have \(\wh{\mu }_{\Theta }(m_{k})\longrightarrow 0\) as \(k\longrightarrow +\infty\) if and only if \(R(m_{k})\longrightarrow+\infty\) as \(k\longrightarrow+\infty\).
\par\medskip
For general \(p\), here is the result proved by Blum and Epstein in \cite{BE}:

\begin{theorem}[\cite{BE}]\label{Theorem 5.11}
Let $p\ge 2$.
 Let \(\Theta =(\theta _{0},\dots,\theta _{p-1})\) be a \(p\)-tuple of elements from \((0,1)\) with the property that the polynomial \(Q_{\Theta }(z)=\sum_{j=0}^{p-1}\theta _{j}z^{j}\) does not vanish on \(\T\). Let, for \(m\ge 0\),
 \[
\psi (m)=R_{0}(m)+R_{p-1}(m)+N(m),
\]
where \(R_{0}(m)\) is the number of maximal blocks of $0$s, $R_{p-1}(m)$ is the number of maximal blocks of $(p-1)$s, and $N(m)$ is the number of digits other than $0$ and $p-1$ in the expansion of $m$ in base $p$. Then there exist two constants $C_{1}, C_{2}>0$ such that
\[
\exp(-C_{2}\psi (m))\le|\wh{\mu }_{\Theta }(m)|\le\exp(-C_{1}\psi (m))\quad \textrm{for every}\ m\in\Z.
\]
Hence $\wh{\mu }_{\Theta }(m_{k})\longrightarrow 0$ as $k\longrightarrow +\infty$ if and only if $\psi (m_{k})\longrightarrow+\infty$ as $k\longrightarrow +\infty$.
\end{theorem}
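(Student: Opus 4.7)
The plan is to compute $\wh{\mu}_\Theta(m)$ as an infinite product and analyse each factor in terms of the base-$p$ digits of $m$. The product structure of $\mu_\Theta$ gives directly
\[
\wh{\mu}_\Theta(m) = \prod_{n\ge 1} Q_\Theta\bigl(e^{2i\pi m p^{-n}}\bigr),
\]
so that after taking logarithms the task reduces to estimating $\sum_{n\ge 1}\varphi(\{mp^{-n}\})$, where $\varphi(t):=-\log|Q_\Theta(e^{2i\pi t})|$ and $\{\cdot\}$ denotes the fractional part. I would then match each term of this sum to an appropriate piece of the base-$p$ expansion of $m$.

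The first step is to record the basic analytic properties of $\varphi$. Since $|Q_\Theta(e^{2i\pi t})|\le\sum_j\theta_j=1$ on $\T$, with equality only at $t=0$, and since $Q_\Theta$ is assumed non-vanishing on $\T$, the function $\varphi$ is nonnegative and continuous on $\R/\Z$, vanishes quadratically at $0$ (a short computation yields $\varphi(t)\sim 2\pi^2\sigma^2 t^2$ with $\sigma^2=\sum j^2\theta_j-(\sum j\theta_j)^2>0$), and takes values in a compact interval $[\alpha,\beta]\subset(0,+\infty)$ on the set $[p^{-1},1-p^{-1}]$.

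Next I would write $m=\sum_{k=0}^{L-1}a_k p^k$ with $a_{L-1}\ne 0$ and classify each $n\in\{1,\dots,L\}$ according to the leading digit $a_{n-1}$ of $\{mp^{-n}\}=0.a_{n-1}a_{n-2}\cdots a_0$. When $a_{n-1}\notin\{0,p-1\}$, $\{mp^{-n}\}\in[p^{-1},1-p^{-1}]$ and the corresponding term contributes an amount of order one. Within a maximal block of $K$ zeros at positions $[i_{\min},i_{\max}]$ flanked below by a non-zero digit, the fractional part $\{mp^{-n}\}$ has precisely $k:=n-i_{\min}\in\{1,\dots,K\}$ leading zeros, whence $\{mp^{-n}\}\asymp p^{-k}$ and $\varphi(\{mp^{-n}\})\asymp p^{-2k}$; summing over the block should yield a total contribution sandwiched between two absolute positive constants, the lower bound coming from the $k=1$ term (where $\{mp^{-n}\}$ is bounded away from $0$) and the upper bound from the convergent geometric series. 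A maximal block of $(p-1)$s is treated symmetrically, with $\{mp^{-n}\}$ close to $1$ rather than $0$. The tail $n>L$ satisfies $\{mp^{-n}\}\le p^{L-n}$ and contributes an absolutely bounded amount by a further geometric estimate.

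Putting these pieces together, each maximal block of $0$s, each maximal block of $(p-1)$s, and each digit outside $\{0,p-1\}$ would contribute an amount of order $1$ to $-\log|\wh{\mu}_\Theta(m)|$, which produces the desired sandwich $C_1\psi(m)\le-\log|\wh{\mu}_\Theta(m)|\le C_2\psi(m)$; the equivalence $\wh{\mu}_\Theta(m_k)\to 0\iff\psi(m_k)\to+\infty$ then follows at once. The main obstacle I anticipate is the block analysis: one must verify that the geometric sum accumulated inside a block of length $K$ is bounded, both above and below, by absolute constants independent of $K$, so that the \emph{number} of blocks (rather than their total length) is the correct gauge. Minor edge effects at the top and bottom of the expansion (in particular trailing zeros of $m$, which contribute nothing and should be excluded from $\psi$) can then be absorbed into the constants without affecting the conclusion.
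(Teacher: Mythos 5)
This theorem is quoted from Blum--Epstein \cite{BE} and the paper supplies no proof of it, so there is nothing internal to compare against; your plan is the standard argument for this result and is sound. The product formula $\wh{\mu}_\Theta(m)=\prod_{n\ge1}Q_\Theta(e^{2i\pi mp^{-n}})$, the two-sided quadratic bound $\varphi(t)\asymp d(t,\Z)^2$ near $t\equiv 0$ together with $\inf_K\varphi>0$ on compact $K\subset(0,1)$ (both of which use that all $\theta_j>0$, so $\sigma^2>0$ and $|Q_\Theta|<1$ off $t\equiv 0$), the order-one contribution of each maximal block (lower bound from the $k=1$ term, where the digit below the block differs from the block's digit, upper bound from the geometric series $\sum_k p^{-2k}$), and the absorption of the trailing-zero block and of the tail $n>L$ into the constants via $\psi(m)\ge 1$ for $m\ne 0$, assemble exactly as you describe into the sandwich $C_1\psi(m)\le-\log|\wh{\mu}_\Theta(m)|\le C_2\psi(m)$.
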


As a consequence, we obtain the following result, which we state using the notation from Theorem \ref{Theorem 5.11}:

\begin{proposition}\label{Proposition 5.12}
 Let \(\cn_{n\ge 0}\) be a sequence of integers such that, for every \(a\in\Z\setminus\{0\}\), the sequence $(\psi (a.c_{n}))_{n\ge 0}$ tends to infinity along a sequence of density \(1\). Then the set \(G'_{p,\cn}\) is residual in \((\pp{p}{},\we)\).
\end{proposition}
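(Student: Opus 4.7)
The plan is to apply Theorem~\ref{Theorem 5.10}; the natural bridge between the hypothesis on $\psi$ and condition $(\star)$ is the Blum-Epstein upper bound of Theorem~\ref{Theorem 5.11}. For a suitable Bernoulli convolution $\mu_\Theta$ this estimate provides a constant $C_1>0$ with $|\muc_\Theta(m)|\le\exp(-C_1\psi(m))$, which transforms the density-one divergence $\psi(ac_n)\to+\infty$ into density-one smallness of $|\muc_\Theta(ac_n)|$, i.e.\ precisely into the last clause of $(\star)$.

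Concretely, I will exhibit the required sequence $(\mu_k)_{k\ge 1}$ as Bernoulli convolutions. For each integer $k$ large enough, set $\Theta_k=(\theta_0^{(k)},\theta_1^{(k)},\dots,\theta_{p-1}^{(k)})\in(0,1)^p$ with $\theta_0^{(k)}=1-(p-1)/k$ and $\theta_j^{(k)}=1/k$ for $j\ne 0$. The associated Bernoulli convolution $\mu_{\Theta_k}$, as introduced at the end of the proof of Fact~\ref{Fact 3.4}, lies in $\pp{p,c}{}$, and the argument recalled there gives $\mu_{\Theta_k}\flwco\delta_1$ as $k\to+\infty$. For $k>2(p-1)$ one has $\theta_0^{(k)}>1/2$, so by the triangle inequality
\[
|Q_{\Theta_k}(z)|\ge\theta_0^{(k)}-\sum_{j\ne 0}\theta_j^{(k)}=2\theta_0^{(k)}-1>0\quad\textrm{for every }z\in\T.
\]
Thus $Q_{\Theta_k}$ does not vanish on $\T$, and Theorem~\ref{Theorem 5.11} applies to yield a constant $C_1^{(k)}>0$ with $|\muc_{\Theta_k}(m)|\le\exp(-C_1^{(k)}\psi(m))$ for every $m\in\Z$.

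It then remains to verify condition $(\star)$ of Theorem~\ref{Theorem 5.10} for this sequence. Given $k\ge 1$, $a\in\Z\setminus\{0\}$ and $\varepsilon>0$, I pick $M$ so large that $\exp(-C_1^{(k)}M)<\varepsilon$. By hypothesis there is a subset $D_a\subseteq\N$ of density one along which $\psi(ac_n)\to+\infty$, so all but finitely many $n\in D_a$ satisfy $\psi(ac_n)\ge M$ and hence $|\muc_{\Theta_k}(ac_n)|<\varepsilon$. The set $\{n\ge 1\;;\;|\muc_{\Theta_k}(ac_n)|<\varepsilon\}$ thus contains a cofinite subset of $D_a$ and has density one. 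Theorem~\ref{Theorem 5.10} will then close the argument and give the residuality of $G'_{p,\cn}$ in $(\pp{p}{},\we)$.

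I do not anticipate a serious obstacle. The only mild calibration is ensuring both that $Q_{\Theta_k}$ does not vanish on $\T$ (so that Theorem~\ref{Theorem 5.11} applies with strictly positive constants) and that $\mu_{\Theta_k}\flwco\delta_1$ as $k\to+\infty$ (so that the first clause of $(\star)$ is automatic); pushing $\theta_0^{(k)}$ toward $1$ while keeping the remaining weights strictly positive handles both requirements at the same time.
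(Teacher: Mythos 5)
Your proposal is correct and follows essentially the same route as the paper: both verify condition $(\star)$ of Theorem~\ref{Theorem 5.10} by taking Bernoulli convolutions $\mu_{\Theta_k}$ with $\Theta_k\to(1,0,\dots,0)$ and $\theta_0^{(k)}>1/2$ (so that $Q_{\Theta_k}$ does not vanish on $\T$), and then use the Blum--Epstein upper bound of Theorem~\ref{Theorem 5.11} to convert the density-one divergence of $\psi(a c_n)$ into density-one smallness of $|\muc_{\Theta_k}(a c_n)|$. Your version merely makes the choice of $\Theta_k$ and the $\varepsilon$--$M$ bookkeeping explicit.
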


\begin{proof}
 It suffices to show that \(\cn_{n\ge 0}\) satisfies assumption \((\star)\) of Theorem \ref{Theorem 5.10}. Let \(\Theta =(\theta _{0},\dots,\theta _{p-1})\) be a sequence of elements of \((0,1)\) summing up to \(1\), and suppose that \(\theta _{0}>1/2\). Then 
 \(\sum_{j=1}^{p-1}\theta _{j}<1/2\), and hence we have
 \[
|Q_{\Theta }(z)|\ge \theta _{0}-\sum_{j=1}^{p-1}\theta _{j}|z|^{j}>0
\]
for every \(z\in\C\) with \(|z|\le 1\). Thus the polynomial \(Q_{\Theta }\) does not vanish on \(\T\). Let \((\Theta _{k})_{k\ge 1}\) be such that \(\fl{\Theta _{k}}{(1,0,\dots,0)}\) as \(\fl{k}{+\infty, }\) and \(Q_{\Theta _{k}}\) does not vanish on \(\T\). Then \(\mu _{\Theta _{k}}\flw\delta _{1}\) as \(\fl{k}{+\infty}\). Moreover, the assumption of Proposition \ref{Proposition 5.12} combined with Theorem \ref{Theorem 5.11} implies that for every \(a\in\Z\setminus\{0\}\), \(\fl{\muc_{\Theta _{k}}(ac_n)}{0}\) along a sequence of density \(1\). The assumption \((\star)\) is thus satisfied, and Proposition \ref{Proposition 5.12} follows.
\end{proof}

\begin{remark}
If \(p,q\ge 2\) are multiplicatively independent integers, and \(c_{n}=q^{n}\), \(n\ge 0\), it is shown in \cite{FS}*{Prop. 1} that any word \(w\) in the letters \(0,1,\dots,p-1\) appears in the expansion of \(q^{n}\) in base $p$ for every  $n$ belonging to a set of integers of density \(1\). The proof given there can be extended to show that for any \(a\in\Z\setminus\{0\}\), \(w\) appears in the 
expansion of \(a.q^{n}\) in base $p$ for every $n$ belonging to a set of integers of density \(1\).
\end{remark}

\subsection{Some open questions}
In Sections \ref{Section 2} and \ref{Section 3}, numerous examples of sequences $\cn$ were presented, for which the set
\[
\gro{p,\cn}{}=\bigl\{\mu \in\pp{p,c}{}\;;\;T_{c_{n}}\mu \nflw\leb\bigr\} 
\]  
was found to be a residual subset of \((\pp{p}{},\we)\).
The condition (H) presented in Section \ref{Section 2} is the most general one that we can provide for the residuality of $\gro{p,\cn}{}$ to hold. However, it does not apply to \emph{all} sequences \(\cn_{n\ge 0}\), leaving the following intriguing question unanswered.

\begin{question}\label{Question 1}
 Let \(p\ge 2\), and let \(\cn_{n\ge 0}\) be any strictly increasing sequence of integers. Is it true that the set \(\gro{p,\cn}{}\) is residual in \((\pp{p}{},\we)\)? 
\end{question}

\par\medskip
If \(\mu \) is \(\cn\)-generic, then 
\(
\frac{1}{N}\sum_{n=0}^{N-1}T_{c_{n}}\mu \flw\leb\quad \textrm{as}\ \fl{N}{+\infty}.
\)
It is thus natural to consider the set 
\[
\gro{p,\cn}''=\bigl\{\mu \in\pp{p,c}{}\;;\;\fr{}\sum_{n=0}^{N-1}T_{c_{n}}\mu \flw\leb\quad \textrm{as } N\longrightarrow +\infty\bigr\}
\]
and to ask the following question.
\begin{question}\label{Question 2}
 Let \(p\ge 2\), and let \(\cn_{n\ge 0}\) be a strictly increasing sequence of integers. Is the set \(\gro{p,\cn}''\) residual in \((\pp{p}{},\we)\)?
\end{question}
For the sequences considered in \cites{H,JR,M,LMP}, the density of the set \(\gro{p,\cn}''{}\) in \((\pp{p}{},\we)\) follows from the result that ergodic measures of positive entropy in \(\pp{p}{}\) are \(\cn\)-generic. But the question of the residuality remains widely open, and it is actually not known if the set \(\gro{p,(q^{n})}''{}\) is residual in \((\pp{p}{},\we)\) when \(q\ge 2\) is an integer which is multiplicatively independent from \(p\). This question is also connected to another conjecture from \cite{L2}, called (C7), which runs as follows and seems to be still open: 
\begin{conjecture}
\emph{
 Let \(p,q\ge 2\) be multiplicatively independent  integers. For any measure \(\mu \in \pp{p,c}{}\), \(\mu \)-almost every \(x\in\T\) is normal in base \(q\), i.e.
 \[
%\fr{}
\frac{1}{N}\sum_{n=0}^{N-1}e^{2i\pi aq^{n}x} \to 0 \quad  \textrm{as} \quad N \to \infty \quad  \textrm{for every}\ a\in\Z\setminus\{0\}.
%\fl{}{}{0}\quad \textrm{as}\ \fl{N}{\infty}\ \textrm{for every}\ a\in\Z\setminus\{0\}.
\]
}
\end{conjecture}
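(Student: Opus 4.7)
The natural approach is to reduce to the $T_p$-ergodic case and then split by entropy. First, I would apply the ergodic decomposition theorem to $\mu \in \pp{p,c}{}$, writing $\mu = \int \mu_\omega \,d\nu(\omega)$ where each $\mu_\omega$ is ergodic for $T_p$; since $\mu$ has no atom, $\mu_\omega$ is non-atomic for $\nu$-almost every $\omega$. Because normality in base $q$ is a pointwise property of $x$, the conclusion for $\mu$ follows as soon as it is known for each such $\mu_\omega$, so the problem reduces to continuous $T_p$-ergodic measures. Next, if $h_{\mu_\omega}(T_p) > 0$, the conclusion is exactly the theorem of Host, Lindenstrauss and Hochman--Shmerkin recalled in Section~\ref{Section1a}. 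The entire difficulty of Conjecture (C7) is therefore concentrated on $T_p$-ergodic \emph{continuous} measures of \emph{zero} entropy.

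For this zero entropy case, the natural harmonic-analytic attack is through the Davenport--Erd\H{o}s--LeVeque criterion. It would suffice to establish, for every $a \in \Z \setminus \{0\}$,
\[
\sum_{N\ge 1} \frac{1}{N}\, \int_\T \Bigl| \frac{1}{N} \sum_{n=0}^{N-1} e^{2i\pi a q^n x} \Bigr|^2 d\mu(x) < +\infty,
\]
and by direct expansion the integrand rewrites as
\[
\frac{1}{N^2} \sum_{n,m = 0}^{N-1} \widehat{\mu}\bigl(a(q^n - q^m)\bigr).
\]
A much weaker but still presently out-of-reach intermediate target would be the first-moment statement $\frac{1}{N} \sum_{n=0}^{N-1} \widehat{\mu}(a q^n) \to 0$, which is equivalent to the Ces\`aro convergence $\frac{1}{N}\sum_{n=0}^{N-1} T_{q^n}\mu \flw \leb$ of Question~\ref{Question 2}.

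The main obstacle is precisely what Theorem~\ref{Theorem 1} of the present paper exhibits: for generic $\mu \in \pp{p,c}{}$, the moduli $|\widehat{\mu}(q^n)|$ remain bounded below on infinite subsequences, so \emph{no} term-wise decay of $\widehat{\mu}(aq^n)$ can be expected. Any successful argument must therefore extract genuine averaged cancellations from the double sum above rather than rely on pointwise smallness of $\widehat{\mu}(a(q^n - q^m))$. In the positive entropy regime such cancellations are supplied by the entropic decoupling of $p$-ary and $q$-ary scales that underlies the proofs of Host and of Hochman-Shmerkin; but a $T_p$-ergodic continuous measure of zero entropy can be extraordinarily rigid --- for instance, supported on the orbit closure of a Sturmian or substitutive sequence --- and no general structural handle on such measures is currently available to force the required cancellations. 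This absence of an a priori decoupling mechanism for zero entropy ergodic $T_p$-invariant measures appears to me to be the decisive reason Conjecture (C7) has remained open, and I would not expect a purely harmonic-analytic proof along the lines above to succeed without substantial new input.
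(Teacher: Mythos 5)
The statement you were asked to prove is Conjecture (C7) of Lyons, which the paper explicitly describes as still open and for which it offers no proof; there is therefore no argument of the authors to compare yours against, and your refusal to manufacture one is the correct response. Your reduction is sound: by the ergodic decomposition, $\nu$-almost every ergodic component of a continuous $\mu\in\pp{p,c}{}$ is itself continuous (an atomic ergodic $T_p$-invariant measure is carried by a finite $T_p$-orbit, and the union of all such orbits is countable, so a positive-measure family of atomic components would force $\mu$ to have an atom), and since base-$q$ normality is a pointwise property the problem does reduce to continuous ergodic components. The positive entropy case is exactly the Host--Lindenstrauss--Hochman--Shmerkin theorem recalled in Section~\ref{Section1a}, so the open core is the zero entropy ergodic case, as you say. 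Your diagnosis of the obstruction is also consistent with the paper: Theorem~\ref{Theorem 1} shows that termwise decay of $\muc(aq^{n})$ fails generically, so any proof must exploit averaged cancellation in $\frac{1}{N^{2}}\sum_{n,m}\muc(a(q^{n}-q^{m}))$, and the authors themselves point out in Section~\ref{Section 5} that even the residuality of the Ces\`aro statement (their Question~\ref{Question 2}) is unknown and that (C7) ``seems to be still open''. In short, your proposal correctly identifies the statement as an open conjecture, accurately locates the known partial results, and does not contain a false proof; the only thing to add is that you should state up front, rather than only implicitly at the end, that what follows is a reduction and an analysis of obstructions rather than a proof.
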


\par\medskip
In our examination of Conjecture~\ref{conj:C3} in the multidimensional context, we have established conditions on matrices \(A,B\in M_{d}(\Z)\) with non-zero determinant that imply that the set
\[
\gro{A,B}{}=\bigl\{\mu \in\pp{A,c}{}\;;\;T_{B^{n}}\mu \nflw\leb_{d}\bigr\}
\]
is residual in \((\pp{A}{d},\we)\). These conditions (that \(A\) be diagonalisable in \(M_{d}(\C)\), that \(\det A\) and \(\det B\) be relatively prime...) arise due to technical difficulties in the proofs in the higher dimensional case. However, it may be that these conditions are not necessary; this is true in the one-dimensional setting.

\begin{question}\label{Question 3}
 Let \(d\ge 2\) and \(A,B\in M_{d}(\Z)\) with $\det A\neq 0$ and $\det B\neq 0$. Is it true that the set \(\gro{A,B}\) is residual in \((\pp{A}{d},\we)\)? 
\end{question}
Let also
\[
\gro{A,B}''=\bigl\{\mu \in\pp{A,c}{d}\;;\;\fr{}\sum_{n=0}^{N-1}T_{B^{n}}\mu \flw\leb_{d}\quad \textrm{as } N\longrightarrow +\infty\bigr\}.
\]
\par\medskip
In analogy to Question \ref{Question 2} , one may also ask:
\begin{question}\label{Question 4}
 Let \(d\ge 2\) and \(A,B\in M_{d}(\Z)\) with $\det A\neq 0$ and $\det B\neq 0$. Is it true that the set \(\gro{A,B}''\) is residual in \((\pp{A}{d},\we)\)?
\end{question}

%%% AUTHOR: optional acknowledgments here
%\section*{Acknowledgments} %%  you may comment this out if no Ackno
%The authors are grateful to the anonymous reviewers for finding
%a bug in the main result.

%%% AUTHOR:
%%% Bibliography goes here. Note that the arXiv cannot process bibtex
%%% or biber bibliographies.  Example of acceptable bibliograpy format:

\begin{bibdiv}
  \begin{biblist}
  
\bib{Alg}{article}{
   author={Algom, Amir},
   title={A simultaneous version of Host's equidistribution theorem},
   journal={Trans. Amer. Math. Soc.},
   volume={373},
   date={2020},
   number={12},
   pages={8439--8462},
%    issn={0002-9947},
%    review={\MR{4177264}},
    %doi={10.1090/tran/8173},
}

\bib{Algom}{article}{
   author={Algom, Amir},
   title={Actions of diagonal endomorphisms on conformally invariant
   measures on the 2-torus},
   journal={Monatsh. Math.},
   volume={195},
   date={2021},
   number={4},
   pages={545--564},
   %issn={0026-9255},
   %review={\MR{4287884}},
   %doi={10.1007/s00605-021-01595-9},
} 

\bib{AlonPeres}{article}{
   author={Alon, N.},
   author={Peres, Y.},
   title={Uniform dilations},
   journal={Geom. Funct. Anal.},
   volume={2},
   date={1992},
   number={1},
   pages={1--28},
   %issn={1016-443X},
   %review={\MR{1143662}},
   %doi={10.1007/BF01895704},
}

\bib{BadGri}{article}{
   author={Badea, Catalin},
   author={Grivaux, Sophie},
   title={Kazhdan constants, continuous probability measures with large
   Fourier coefficients and rigidity sequences},
   journal={Comment. Math. Helv.},
   volume={95},
   date={2020},
   number={1},
   pages={99--127},
   %issn={0010-2571},
   %review={\MR{4082895}},
   %doi={10.4171/cmh/482},
}
  
  \bib{BenQui}{article}{
   author={Benoist, Yves},
   author={Quint, Jean-Fran\c{c}ois},
   title={Introduction to random walks on homogeneous spaces},
   journal={Jpn. J. Math.},
   volume={7},
   date={2012},
   number={2},
   pages={135--166},
   %issn={0289-2316},
   %review={\MR{2995228}},
   %doi={10.1007/s11537-012-1220-9},
}
  
  \bib{Ber}{article}{
   author={Berend, Daniel},
   title={Multi-invariant sets on tori},
   journal={Trans. Amer. Math. Soc.},
   volume={280},
   date={1983},
   number={2},
   pages={509--532},
   %issn={0002-9947},
   %review={\MR{716835}},
   %doi={10.2307/1999631},
}

\bib{BerPerJLMS}{article}{
   author={Berend, Daniel},
   author={Peres, Yuval},
   title={Asymptotically dense dilations of sets on the circle},
   journal={J. London Math. Soc. (2)},
   volume={47},
   date={1993},
   number={1},
   pages={1--17},
   %issn={0024-6107},
   %review={\MR{1200973}},
   %doi={10.1112/jlms/s2-47.1.1},
}

\bib{BE}{article}{
   author={Blum, J. R.},
   author={Epstein, Bernard},
   title={On the Fourier-Stieltjes coefficients of Cantor-type
   distributions},
   journal={Israel J. Math.},
   volume={17},
   date={1974},
   pages={35--45},
%    issn={0021-2172},
%    review={\MR{364988}},
    %doi={10.1007/BF02756822},
}

\bib{Bosh}{article}{
   author={Boshernitzan, Michael D.},
   title={Elementary proof of Furstenberg's Diophantine result},
   journal={Proc. Amer. Math. Soc.},
   volume={122},
   date={1994},
   number={1},
   pages={67--70},
   %issn={0002-9939},
   %review={\MR{1195714}},
   %doi={10.2307/2160842},
}

\bib{BLMV}{article}{
   author={Bourgain, Jean},
   author={Lindenstrauss, Elon},
   author={Michel, Philippe},
   author={Venkatesh, Akshay},
   title={Some effective results for $\times a\times b$},
   journal={Ergodic Theory Dynam. Systems},
   volume={29},
   date={2009},
   number={6},
   pages={1705--1722},
   %issn={0143-3857},
   %review={\MR{2563089}},
   %doi={10.1017/S0143385708000898},
}

\bib{B}{article}{
   author={Bowen, Rufus},
   title={Periodic points and measures for Axiom $A$ diffeomorphisms},
   journal={Trans. Amer. Math. Soc.},
   volume={154},
   date={1971},
   pages={377--397},
%    issn={0002-9947},
%    review={\MR{282372}},
%    doi={10.2307/1995452},
}

\bib{Bremont}{article}{
   author={Br\'{e}mont, Julien},
   title={Self-similar measures and the Rajchman property},
   %language={English, with English and French summaries},
   journal={Ann. H. Lebesgue},
   volume={4},
   date={2021},
   pages={973--1004},
   %review={\MR{4315775}},
   %doi={10.5802/ahl.94},
} 
		
\bib{DGS}{book}{
   author={Denker, Manfred},
   author={Grillenberger, Christian},
   author={Sigmund, Karl},
   title={Ergodic theory on compact spaces},
   series={Lecture Notes in Mathematics, Vol. 527},
   publisher={Springer-Verlag, Berlin-New York},
   date={1976},
   pages={iv+360},
   %review={\MR{0457675}},
}
\bib{EinsFish}{article}{
   author={Einsiedler, Manfred},
   author={Fish, Alexander},
   title={Rigidity of measures invariant under the action of a
   multiplicative semigroup of polynomial growth on $\mathbb{T}$},
   journal={Ergodic Theory Dynam. Systems},
   volume={30},
   date={2010},
   number={1},
   pages={151--157},
   %issn={0143-3857},
   %review={\MR{2586349}},
   %doi={10.1017/S014338570800103X},
}

\bib{EnsLind}{article}{
   author={Einsiedler, M.},
   author={Lindenstrauss, E.},
   title={Diagonal actions on locally homogeneous spaces},
   conference={
      title={Homogeneous flows, moduli spaces and arithmetic},
   },
   book={
      series={Clay Math. Proc.},
      volume={10},
      publisher={Amer. Math. Soc., Providence, RI},
   },
   date={2010},
   pages={155--241},
   %review={\MR{2648695}},
}

\bib{Feldman}{article}{
   author={Feldman, J.},
   title={A generalization of a result of R. Lyons about measures on
   $[0,1)$},
   journal={Israel J. Math.},
   volume={81},
   date={1993},
   number={3},
   pages={281--287},
   %issn={0021-2172},
   %review={\MR{1231193}},
   %doi={10.1007/BF02764832},
}

\bib{FS}{article}{
   author={Feldman, J.},
   author={Smorodinsky, M.},
   title={Normal numbers from independent processes},
   journal={Ergodic Theory Dynam. Systems},
   volume={12},
   date={1992},
   number={4},
   pages={707--712},
%    issn={0143-3857},
%    review={\MR{1200338}},
%    doi={10.1017/S0143385700007045},
}

  \bib{F}{article}{
   author={Furstenberg, Harry},
   title={Disjointness in ergodic theory, minimal sets, and a problem in
   Diophantine approximation},
   journal={Math. Systems Theory},
   volume={1},
   date={1967},
   pages={1--49},
%    issn={0025-5661},
%    review={\MR{213508}},
%    doi={10.1007/BF01692494},
}

% 
% \bib{GW}{article}{
%    author={Gao, Xiang},
%    author={Wen, Shengyou},
%    title={Note on Fourier-Stieltjes coefficients of coin-tossing measures},
%    journal={Bull. Aust. Math. Soc.},
%    volume={102},
%    date={2020},
%    number={3},
%    pages={479--489},
% %    issn={0004-9727},
% %    review={\MR{4176691}},
% %    doi={10.1017/s0004972720000313},
% }

\bib{Glasner}{article}{
   author={Glasner, Shmuel},
   title={Almost periodic sets and measures on the torus},
   journal={Israel J. Math.},
   volume={32},
   date={1979},
   number={2-3},
   pages={161--172},
   %issn={0021-2172},
   %review={\MR{531259}},
   %doi={10.1007/BF02764912},
}	

\bib{GM}{article}{
   author={Grivaux, Sophie},
   author={Matheron, \'{E}tienne},
   title={Invariant measures for frequently hypercyclic operators},
   journal={Adv. Math.},
   volume={265},
   date={2014},
   pages={371--427},
%    issn={0001-8708},
%    review={\MR{3255465}},
%    doi={10.1016/j.aim.2014.08.002},
}

\bib{Hochman}{article}{
   author={Hochman, Michael},
   title={Geometric rigidity of $\times m$ invariant measures},
   journal={J. Eur. Math. Soc.},
   volume={14},
   date={2012},
   number={5},
   pages={1539--1563},
%    issn={1435-9855},
%    review={\MR{2966659}},
%    doi={10.4171/JEMS/340},
}

\bib{HochShm}{article}{
   author={Hochman, Michael},
   author={Shmerkin, Pablo},
   title={Equidistribution from fractal measures},
   journal={Invent. Math.},
   volume={202},
   date={2015},
   number={1},
   pages={427--479},
   %issn={0020-9910},
   %review={\MR{3402802}},
   %doi={10.1007/s00222-014-0573-5},
}

\bib{Hoch}{article}{
   author={Hochman, Michael},
   title={A short proof of Host's equidistribution theorem},
   journal={Israel J. Math.},
   volume={251},
   date={2022},
   number={2},
   pages={527--539},
   %issn={0021-2172},
   %review={\MR{4527549}},
   %doi={10.1007/s11856-022-2444-x},
}

  \bib{H}{article}{
   author={Host, Bernard},
   title={Nombres normaux, entropie, translations},
   %language={French, with English summary},
   journal={Israel J. Math.},
   volume={91},
   date={1995},
   number={1-3},
   pages={419--428},
%    issn={0021-2172},
%    review={\MR{1348326}},
%    doi={10.1007/BF02761660},
}

\bib{H2}{article}{
   author={Host, Bernard},
   title={Some results of uniform distribution in the multidimensional
   torus},
   journal={Ergodic Theory Dynam. Systems},
   volume={20},
   date={2000},
   number={2},
   pages={439--452},
%    issn={0143-3857},
%    review={\MR{1756978}},
%    doi={10.1017/S0143385700000201},
}

\bib{J}{article}{
   author={Johnson, Aimee S. A.},
   title={Measures on the circle invariant under multiplication by a
   nonlacunary subsemigroup of the integers},
   journal={Israel J. Math.},
   volume={77},
   date={1992},
   number={1-2},
   pages={211--240},
%    issn={0021-2172},
%    review={\MR{1194793}},
%    doi={10.1007/BF02808018},
}
	
	\bib{JR}{article}{
   author={Johnson, Aimee},
   author={Rudolph, Daniel J.},
   title={Convergence under $\times_q$ of $\times_p$ invariant measures on
   the circle},
   journal={Adv. Math.},
   volume={115},
   date={1995},
   number={1},
   pages={117--140},
%    issn={0001-8708},
%    review={\MR{1351328}},
%    doi={10.1006/aima.1995.1052},
}

\bib{KS}{book}{
   author={Kahane, Jean-Pierre},
   author={Salem, Rapha\"{e}l},
   title={Ensembles parfaits et s\'{e}ries trigonom\'{e}triques},
   %language={French, with French summary},
   edition={2},
   note={With notes by Kahane, Thomas W. K\"{o}rner, Russell Lyons and Stephen
   William Drury},
   publisher={Hermann, Paris},
   date={1994},
   pages={245},
%    isbn={2-7056-6193-X},
%    review={\MR{1303593}},
}

\bib{KS96}{article}{
   author={Katok, A.},
   author={Spatzier, R. J.},
   title={Invariant measures for higher-rank hyperbolic abelian actions},
   journal={Ergodic Theory Dynam. Systems},
   volume={16},
   date={1996},
   number={4},
   pages={751--778},
   %issn={0143-3857},
   %review={\MR{1406432}},
   %doi={10.1017/S0143385700009081},
}

\bib{Kra}{article}{
   author={Kra, Bryna},
   title={A generalization of Furstenberg's Diophantine theorem},
   journal={Proc. Amer. Math. Soc.},
   volume={127},
   date={1999},
   number={7},
   pages={1951--1956},
   %issn={0002-9939},
   %review={\MR{1487320}},
   %doi={10.1090/S0002-9939-99-04742-5},
}

\bib{KLO}{article}{
   author={Kwietniak, Dominik},
   author={\L \c{a}cka, Martha},
   author={Oprocha, Piotr},
   title={A panorama of specification-like properties and their
   consequences},
   conference={
      title={Dynamics and numbers},
   },
   book={
      series={Contemp. Math.},
      volume={669},
      publisher={Amer. Math. Soc., Providence, RI},
   },
   date={2016},
   pages={155--186},
%    review={\MR{3546668}},
%    doi={10.1090/conm/669/13428},
}

\bib{Elon-padic}{article}{
   author={Lindenstrauss, Elon},
   title={$p$-adic foliation and equidistribution},
   journal={Israel J. Math.},
   volume={122},
   date={2001},
   pages={29--42},
%    issn={0021-2172},
%    review={\MR{1826489}},
%    doi={10.1007/BF02809889},
}
		
	\bib{Lind}{article}{
   author={Lindenstrauss, Elon},
   title={Rigidity of multiparameter actions},
   %note={Probability in mathematics},
   journal={Israel J. Math.},
   volume={149},
   date={2005},
   pages={199--226},
%    issn={0021-2172},
%    review={\MR{2191215}},
%    doi={10.1007/BF02772541},
}

\bib{LindMarg}{article}{
   author={Lindenstrauss, Elon},
   title={Recent progress on rigidity properties of higher rank
   diagonalizable actions and applications},
   conference={
      title={Dynamics, geometry, number theory---the impact of Margulis on
      modern mathematics},
   },
   book={
      publisher={Univ. Chicago Press, Chicago, IL},
   },
   date={2022},
   pages={362--425},
   %review={\MR{4422060}},
}

\bib{LMP}{article}{
   author={Lindenstrauss, Elon},
   author={Meiri, David},
   author={Peres, Yuval},
   title={Entropy of convolutions on the circle},
   journal={Ann. of Math. (2)},
   volume={149},
   date={1999},
   number={3},
   pages={871--904},
   %issn={0003-486X},
   %review={\MR{1709305}},
   %doi={10.2307/121075},
}

\bib{L1}{article}{
   author={Lyons, Russell},
   title={Mixing and asymptotic distribution modulo $1$},
   journal={Ergodic Theory Dynam. Systems},
   volume={8},
   date={1988},
   number={4},
   pages={597--619},
%    issn={0143-3857},
%    review={\MR{980799}},
%    doi={10.1017/S0143385700004715},
}

\bib{L2}{article}{
   author={Lyons, Russell},
   title={On measures simultaneously $2$- and $3$-invariant},
   journal={Israel J. Math.},
   volume={61},
   date={1988},
   number={2},
   pages={219--224},
%    issn={0021-2172},
%    review={\MR{941238}},
%    doi={10.1007/BF02766212},
}

\bib{M}{article}{
   author={Meiri, David},
   title={Entropy and uniform distribution of orbits in ${\bf T}^d$},
   journal={Israel J. Math.},
   volume={105},
   date={1998},
   pages={155--183},
   %issn={0021-2172},
   %review={\MR{1639747}},
   %doi={10.1007/BF02780327},
}
	
	\bib{MP}{article}{
   author={Meiri, David},
   author={Peres, Yuval},
   title={Bi-invariant sets and measures have integer Hausdorff dimension},
   journal={Ergodic Theory Dynam. Systems},
   volume={19},
   date={1999},
   number={2},
   pages={523--534},
%    issn={0143-3857},
%    review={\MR{1685405}},
%    doi={10.1017/S014338579912100X},
}
	
\bib{Parry}{article}{
   author={Parry, William},
   title={Squaring and cubing the circle---Rudolph's theorem},
   conference={
      title={Ergodic theory of ${\mathbb{Z}}^d$ actions},
      address={Warwick},
      date={1993--1994},
   },
   book={
      series={London Math. Soc. Lecture Note Ser.},
      volume={228},
      publisher={Cambridge Univ. Press, Cambridge},
   },
   date={1996},
   pages={177--183},
   %review={\MR{1411219}},
   %doi={10.1017/CBO9780511662812.006},
}

\bib{R}{article}{
   author={Rudolph, Daniel J.},
   title={$\times 2$ and $\times 3$ invariant measures and entropy},
   journal={Ergodic Theory Dynam. Systems},
   volume={10},
   date={1990},
   number={2},
   pages={395--406},
%    issn={0143-3857},
%    review={\MR{1062766}},
%    doi={10.1017/S0143385700005629},
}

\bib{Shm}{arXiv}{
  author={Shmerkin, Pablo},
  title={Slices and distances: on two problems of Furstenberg and Falconer (to appear in the Proceedings of the 2022 ICM)},
  date={2022},
  eprint={2109.12157},
  archiveprefix={arXiv},
  primaryclass={math.CA},
}

%\bib{Shm}{article}{
%   author={Shmerkin, Pablo},
%    title={Slices and distances: on two problems of Furstenberg and Falconer},
%   conference={
%      title={Proceedings of the 2022 ICM},
%      %address={},
%      date={2022},
%   },
%   book={arXiv: 2109.12157
      %series={},
      %volume={},
      %publisher={},
   %},
   %date={},
   %pages={},
   %review={},
   %doi={},
%}

%\bib{Schmidt}{article}{
%   author={Schmidt, Klaus},
%   title={Ergodic Theory and Number Theory: The Work of Elon Lindenstrausss},
%   journal={Internat. Math. Nachr.},
%   volume={216},
%   date={2011},
%   pages={1--13},
%    issn={0021-2172},
%    review={\MR{2121533}},
%    doi={10.1007/BF02984405},
%}

\bib{S1}{article}{
   author={Sigmund, Karl},
   title={Generic properties of invariant measures for Axiom ${\rm A}$
   diffeomorphisms},
   journal={Invent. Math.},
   volume={11},
   date={1970},
   pages={99--109},
%    issn={0020-9910},
%    review={\MR{286135}},
%    doi={10.1007/BF01404606},
}

\bib{S2}{article}{
   author={Sigmund, Karl},
   title={On dynamical systems with the specification property},
   journal={Trans. Amer. Math. Soc.},
   volume={190},
   date={1974},
   pages={285--299},
%    issn={0002-9947},
%    review={\MR{352411}},
%    doi={10.2307/1996963},
}

\bib{VarjuYu}{article}{
   author={Varj\'{u}, P\'{e}ter P.},
   author={Yu, Han},
   title={Fourier decay of self-similar measures and self-similar sets of
   uniqueness},
   journal={Anal. PDE},
   volume={15},
   date={2022},
   number={3},
   pages={843--858},
   %issn={2157-5045},
   %review={\MR{4442842}},
   %doi={10.2140/apde.2022.15.843},
} 

     \end{biblist}
\end{bibdiv}

%%% AUTHOR: Include a short description of each author following the
%%% structure below. Use the same short tags used previously.  
%%% Use \imageat{} and \imagedot{} instead of "@" and "." in
%%% email addresses-this replaces the symbols with graphics to avoid 
%%% e-mail address harvesting from the .pdf file

\begin{dajauthors}
\begin{authorinfo}[C.B.]
  Catalin Badea \& Sophie Grivaux\\
  Univ. Lille, CNRS\\
  UMR 8524 - Laboratoire Paul
Painlev\'e\\ Lille, France\\
  cbadea\imageat{}univ-lille\imagedot{}fr \\
  sophie.grivaux\imageat{}univ-lille\imagedot{}fr \\
  \url{https://pro.univ-lille.fr/catalin-badea}\\
  \url{https://pro.univ-lille.fr/sophie-grivaux}
\end{authorinfo}
\end{dajauthors}

\end{document}